\numberwithin{equation}{section}
\def\floor(#1){\left \lfloor #1 \right \rfloor}
\def\beq#1\eeq{\begin{equation} #1 \end{equation}}
\def\bal#1\eal{\begin{aligned} #1 \end{aligned}}
\definecolor{black}{rgb}{0,0,0}
\definecolor{red}{rgb}{1,0,0}
\definecolor{blue}{rgb}{0,0,1}
\theoremstyle{plain}% default
\newtheorem{theorem}{Theorem}[section]
\newtheorem{lemma}[theorem]{Lemma}
\newtheorem{corollary}[theorem]{Corollary}
\newtheorem{assumption}{Assumption}[section]
\newtheorem{remark}{Remark}[section]
\def\cF{{\mathcal F }}
\def\tu{\tilde u}
\def\tf{\widetilde f}
\def\al{{\alpha}}
\def\be{{\beta}}
\def\RR{{\mathbb R}}
\def\II{{(0,1)}}
\begin{document}

\title[Fractional order variational problems]
{Variational formulation of problems involving fractional order
differential operators}
\author {Bangti Jin\and Raytcho Lazarov \and Joseph Pasciak}
\address {Department of Mathematics and Institute for
Applied Mathematics and Computational Science, Texas A\&M University,
College Station, TX 77843-3368 ({\texttt{btjin,lazarov,pasciak@math.tamu.edu}})}
\date{started July, 2012; today is \today}

\begin{abstract}
In this work, we consider boundary value problems involving Caputo and Riemann-Liouville
fractional derivatives of order $\alpha\in(1,2)$ on the unit interval $(0,1)$.
These fractional derivatives lead to non-symmetric boundary value problems,
which are investigated from a variational point of view.
The variational problem for the Riemann-Liouville case is coercive on the space
$H_0^{\alpha/2}(0,1)$ but the solutions are less regular, whereas that for
the Caputo case involves different test and trial spaces. The numerical analysis of these
problems requires the so-called shift theorems which show that the solutions of the variational
problem are more regular. The regularity pickup enables one to establish
convergence rates of the finite element approximations.
Finally, numerical results are presented to illustrate the error estimates.
\end{abstract}

\maketitle

\newcount\icount

\def\DD#1#2{\icount=#1
  \ifnum\icount<1
  \,_{ 0}\kern -.1em D^{#2}_{\kern -.1em x}
  \else
  \,_{x}\kern -.2em D^{#2}_1
  \fi
}

\def\DDRI#1#2{\icount=#1
  \ifnum\icount<1
  \,_{-\infty}^{\kern 1em R}\kern -.2em D^{#2}_{\kern -.1em x}
  \else
  \,_{x}^R \kern -.2em D^{#2}_\infty
  \fi
}

\def\DDR#1#2{\icount=#1
  \ifnum\icount<1
 _{0}^{ \kern -.1em R} \kern -.2em D^{#2}_{\kern -.1em x}
  \else
 _{x}^{ \kern -.1em R} \kern -.2em D^{#2}_{\kern -.1em 1}
  \fi
}

\def\DDCI#1#2{\icount=#1
  \ifnum\icount<1
  \,_{-\infty}^{\kern 1em C}  \kern -.2em D^{#2}_{\kern -.1em x}
  \else
  \,_{x}^C \kern -.2em  D^{#2}_\infty
  \fi
}

\def\DDC#1#2{\icount=#1
  \ifnum\icount<1
  \,_{0}^C \kern -.2em  D^{#2}_{\kern -.1em x}
  \else
  \,_{x}^C \kern -.2em D^{#2}_1
  \fi
}

\def\Hd#1{\widetilde H^{#1}}

\def\Hdi#1#2{\icount=#1
  \ifnum\icount<1
  \widetilde H_{L}^{#2}\II
  \else
  \widetilde H_{R}^{#2}\II
  \fi
}

\def\Cd#1{\icount=#1
  \ifnum\icount<1
  \widetilde C_{L}
  \else
  \widetilde C_{R}
  \fi
}

\section{Introduction}
In this paper we consider the following fractional-order source problem:
find $u$ such that
\begin{equation}\label{strongp}
  \begin{aligned}
   %\Ell u(x):=
   -\DD 0 \alpha u(x) + qu = f, & \quad x \in \II\\
   u(0)=u(1)=0, &
   \end{aligned}
\end{equation}
where $\alpha\in(1,2)$ is the order of the derivative, and $\DD 0\al$ refers to either
the Caputo or Riemann-Liouville fractional derivative of order $\alpha$ defined
below by \eqref{Caputo} and \eqref{Riemann}, respectively. Here $f$ is a function
in $L^2\II$ or other suitable Sobolev space. The potential coefficient
$q \in L^\infty(0,1)$ is a bounded measurable function.

The interest in the model \eqref{strongp} is motivated by the studies on anomalous diffusion
processes. Diffusion is one of the most prominent and ubiquitous transport mechanisms
found in nature. At a microscopic level, it is the result of the random motion of individual
particles, and the use of the Laplace operator in the canonical diffusion model rests on a
Brownian assumption on the particle motion. This microscopic explanation was established by
physicist Albert Einstein at the beginning of 20th century. However, over the last two decades,
a large body of literature has shown that
anomalous diffusion, in which the mean square variances grows faster (superdiffusion) or slower
(subdiffusion) than that in a Gaussian process, offers a superior fit to experimental data observed in a number
of important practical applications, e.g., viscoelastic materials, soil contamination and underground
water flow. The model \eqref{strongp} represents the steady state of one-dimensional superdiffusion process.
It can be derived from the following observation at a microscopic level: the particle motion might be dependent,
and can frequently take very large steps, following some heavy-tailed probability distribution.
The long range correlation causes the underlying stochastic process to deviate significantly
from the Brownian motion for the classical diffusion process. The macroscopic counterpart
is spatial fractional diffusion equations (SpFDEs), and we refer to \cite{BensonWheatcraftMeerschaert:2000}
for the derivation and relevant physical explanations. Numerous experimental studies have
convincingly demonstrated that SpFDEs can provide faithful description of the superdiffusion
process, and thus have attracted considerable attentions in practical applications.

Because of their extraordinary modeling capability, the accurate simulation of SpFDEs has become
an important task. Like in the classical diffusion case, closed form solutions are available only in a
few very limited cases, and generally one has to resort to numerical methods for accurate simulations.
A number of numerical methods, prominently the finite difference method, have been developed for the
time-dependent superdiffusion problem in the literature. In \cite{TadjeranMeerschaertScheffler:2006},
a second-order finite difference scheme based on Crank-Nicolson scheme in time and the shifted Gr\"{u}nwald
formula in space is proposed for one-dimensional diffusion problem with a Riemann-Liouville derivative
in space, and the stability, consistency and convergence of the scheme are provided. See
\cite{TadjeranMeerschaert:2007} for the two-dimensional extension and also
\cite{PodlubnyChechkin:2009,Sousa:2009} for related works. In \cite{Shkhanukov:1996}, Shkhanukov analyzed
a finite difference scheme for a second-order differential equation with a fractional derivative in the lower-order
term. Recently, high-order finite difference schemes like the weighted and shifted Gr\"{u}nwald difference
have received some interest \cite{ZhouTianDeng:2013}.
We note that the Gr\"{u}nwald formula on a uniform stencil leads to Toeplitz type matrices which can
be exploited for space storage reduction and fast solution via fast Fourier transform \cite{WangBasu:2012}
and for designing efficient multigrid preconditioners \cite{PangSun:2012}.

In contrast, the theoretical analysis on SpFDEs like problem
\eqref{strongp} remains relatively scarce. This is attribute to
the fact that fractional differential equations involve mathematical difficulties that are not
present in the analysis of the canonical second-order elliptic equations. In particular,
the fractional differential operators in the model \eqref{strongp} are nonlocal. % and non-selfadjoint.
In a series of works \cite{ErvinHeuerRoop:2007, ErvinRoop:2006,ErvinRoop:2007},
Ervin and Roop presented a first rigorous analysis of the well-posedness of the weak formulation
of the stationary SpFDE with Riemann-Liouville fractional derivatives via their
relation to fractional-order Sobolev spaces. The paper \cite{ErvinRoop:2006} also provided
an optimal error estimate for the Galerkin finite element method under the assumption
that the solution has full regularity, namely, $\|u \|_{H^\al\II} \le c \|f \|_{L^2\II}$.
Unfortunately, such regularity is not justified in general (see Theorem \ref{thm:regrl} and Remark \ref{rmk:regrl}).
Recently, Wang and Yang \cite{WangYang:2013} generalized the analysis to the case of
fractional-order derivatives involving a variable coefficient, analyzed the regularity of the solution
in H\"{o}lder spaces, and established the well-posedness of a Petrov-Galerkin formulation.
However, the discrete inf-sup condition was not established and hence an error estimate of
the discrete approximations was not provided.

The goal of this work is to: (1) revisit the variational formulation for problem \eqref{strongp} for
both cases of Riemann-Liouville and Caputo fractional derivatives, (2) establish the variational
stability and shift theorems, and (3) develop relevant finite element
analysis and derive error estimates of the discrete approximations expressed in terms of the
smoothness of the right-hand side only. The Caputo case, which is very natural with its convenient treatment
of boundary conditions, has not been considered earlier. As we shall see in Remark \ref{rmk:extcaputo} later,
there is an obvious difficulty in the attempt to extend the Caputo fractional derivative operator
defined below in \eqref{Caputo} from functions in $C^n\II$ to fractional order Sobolev spaces $H^s\II$.
In this paper we have developed a strategy that allows us to overcome this difficulty.

The study of problem \eqref{strongp} from a variational point of view can be summarized as follows.
The Riemann-Liouville case leads to a nonsymmetric but coercive bilinear form on the space $H_0^{\alpha/2}
\II$ (see Theorem \ref{t:sourceRL}). This fact for the case $q=0$ has been established earlier by
Ervin and Roop in \cite{ErvinRoop:2006}. The Caputo case requires a test space $V$ that is different
from the solution space $U$ and involves a nonlocal integral constraint; see \eqref{V-space}. Further,
we establish the well-posedness of the variational formulations, and more importantly, we establish
regularity pickup of the weak solutions of \eqref{strongp}, cf. Theorems \ref{thm:regrl} and
\ref{thm:regcap}, as well as that of the weak solutions to the adjoint problem. We note that the
solution regularity has only been assumed earlier, e.g., in the error analysis of Ervin and Roop
\cite{ErvinRoop:2006}. The regularity pickup is essential for proving (optimal) convergence rates for
the finite element method.

The rest of the paper is organized as follows. In Section \ref{sec:prelim} we recall preliminaries
on fractional calculus, and study fractional derivatives as operators on fractional Sobolev spaces.
Then in Section \ref{sec:strongsol}, we explicitly construct the strong solution representation via
fractional-order integral operators. The continuous variational formulations are developed in Section
\ref{sec:stability}, and their well-posedness is also established. Then in Section \ref{sec:fem}, the
stability of the discrete variational formulations is studied, and error estimates of the finite
element approximations are provided. Finally, in Section \ref{sec:num} some illustrative numerical
results are presented to confirm the error estimates. Throughout, the notation $c$, with or
without subscript, refers to a generic positive constant which can take different values at
different occurrences, but it is always independent of the solution $u$ and the mesh size $h$, and
$n$ denotes a fixed positive integer.

\section{Fractional differential operators on fractional Sobolev spaces}
\label{sec:prelim}

We first briefly recall two common fractional derivatives, i.e., Caputo and  Riemann-Liouville
fractional derivatives. For any positive non-integer  real number $\beta$ with $n-1 < \beta < n$,
the (formal) left-sided Caputo fractional derivative of order $\beta$ is defined by (see, e.g.,
\cite[pp. 92]{KilbasSrivastavaTrujillo:2006},
\cite{Podlubny_book})
\begin{equation}\label{Caputo}
  \DDC 0 \be u = {_0\hspace{-0.3mm}I^{n-\beta}_x}\bigg(\frac {d^nu} {d
    x^n }\bigg).
\end{equation}
and the (formal) left-sided Riemann-Liouville fractional derivative of order $\be$
is defined by \cite[pp. 70]{KilbasSrivastavaTrujillo:2006}:
\begin{equation}\label{Riemann}
  \DDR0\be u =\frac {d^n} {d x^n} \bigg({_0\hspace{-0.3mm}I^{n-\beta}_x} u\bigg) .
\end{equation}
In the definitions \eqref{Caputo} and \eqref{Riemann}, $_0\hspace{-0.3mm}I^{\gamma}_x$ for $\gamma>0$
is the left-sided Riemann-Liouville fractional integral operator of order $\gamma$ defined by
\begin{equation*}
 ({\,_0\hspace{-0.3mm}I^\gamma_x} f) (x)= \frac 1{\Gamma(\gamma)} \int_0^x (x-t)^{\gamma-1} f(t)dt,
\end{equation*}
where $\Gamma(\cdot)$ is Euler's Gamma function defined by $\Gamma(x)=\int_0^\infty t^{x-1}e^{-t}dt$.
As the order $\gamma$ approaches 0, we can identify the operator $_0\hspace{-0.3mm}I_x^\gamma$ with the identity operator
\cite[pp. 65, eq. (2.89)]{Podlubny_book}. We note that the fractional integral operator ${_0\hspace{-0.3mm}I^\gamma_x}$
satisfies a semigroup property, i.e., for $\gamma,\delta> 0$ and smooth $u$, there holds \cite[Lemma 2.3, pp. 73]{KilbasSrivastavaTrujillo:2006}
\begin{equation} \label{semig}
{_0\hspace{-0.3mm}I_x^{\gamma+\delta}}u={_0\hspace{-0.3mm}I_x^\gamma} {_0\hspace{-0.3mm}I_x^\delta} u.
\end{equation}
This identity extends to the space $L^2\II$ by a density argument.

Clearly, the fractional derivatives $\DDC0\beta$ and $\DDR0\beta$  are well defined for functions in
$C^n[0,1]$ and are related to each other by the formula (cf. \cite[pp. 91, eq. (2.4.6)]{KilbasSrivastavaTrujillo:2006})
\begin{equation} \label{rl-c}
 \DDC0\beta u =  {\DDR0\beta u}-\sum_{k=0}^{n-1} \frac
{u^{(k)}(0)} {\Gamma(k-\beta  +1)} x^{k-\beta}.
\end{equation}
This relation will be used to construct the solution representation in the Caputo case in Section \ref{sec:strongsol}.

The right-sided versions of fractional-order integrals and derivatives are defined analogously, i.e.,
\begin{equation*}
  ({_x\hspace{-0.3mm}I^\gamma_1} f) (x)= \frac 1{\Gamma(\gamma)}\int_x^1 (x-t)^{\gamma-1}f(t)\,dt,
\end{equation*}
and
\begin{equation*}
\begin{aligned}
  \DDC1\beta u = (-1)^n {_x\hspace{-0.3mm}I^{n-\beta}_1}\bigg(\frac {d^nu} {d x^n }\bigg),
\quad \quad  \DDR1\beta u =(-1)^n\frac {d^n} {d x^n} \bigg({_x\hspace{-0.3mm}I^{n-\beta}_1} u\bigg) .
\end{aligned}
\end{equation*}
The formula analogous to \eqref{rl-c} is \cite[pp. 91, eq. (2.4.7)]{KilbasSrivastavaTrujillo:2006}
\begin{equation} \label{rl-c1}
{ \DDC1\be u }={\DDR1\beta u}-\sum_{k=0}^{n-1} \frac
{(-1)^k u^{(k)}(1)} {\Gamma(k-\be+1)} (1-x)^{k-\beta}.
\end{equation}
Finally, we observe that for $\phi,\theta\in L^2\II$ and $\beta>-1$, there holds
\begin{equation*}
   \begin{aligned}
     \int_0^1 \int_0^x (x-t)^\beta |\phi(t) \theta(x)|\, dt dx&\le \frac 12\int_0^1 \int_0^x (x-t)^{\beta} \big[ \phi(t)^2+\theta(x)^2\big] \, dt dx\\
     &\le  \frac 1 {2 (1+\beta)} \big[\|\phi\|_{L^2\II}^2 + \|\theta\|_{L^2\II}^2\big].
   \end{aligned}
\end{equation*}
Here the second line follows from the inequality $\int_0^x(x-t)^\beta dt\le\int_0^1(1-t)^\beta dt
= \frac{1}{1+\beta}$. Therefore, Fubini's Theorem implies the following useful change of integration
order formula (cf. also \cite[Lemma 2.7]{KilbasSrivastavaTrujillo:2006}):
\begin{equation}\label{eqn:intpart}
({_0\hspace{-0.3mm}I_x^\beta} \phi,\theta) = (\phi,{_x\hspace{-0.3mm}I_1^\beta} \theta),
\qquad \hbox{ for all } \phi,\theta\in L^2\II.
\end{equation}

In order to investigate the model \eqref{strongp}, we study these operators in fractional
Sobolev spaces. This can be seen from the fact that the finite element method for the
second-order elliptic problems is most conveniently analyzed in Sobolev spaces. In particular, optimal
convergence rates with respect to the data regularity hinge essentially on the
Sobolev regularity of the solutions. Analogously, the regularity properties of solutions to
\eqref{strongp} are most naturally studied on fractional order Sobolev spaces.
However, this requires the extension of the formal definition of the fractional derivatives.

To this end, we first introduce some function spaces. For any $\beta\ge 0$, we denote
$H^\beta\II$ to be the Sobolev space of order $\beta$ on the unit interval $\II$
(see, e.g., \cite{grisvard}), and $\Hd \beta\II$ to be the set of
functions in $H^\beta\II$ whose extension by zero to $\RR$ are in $H^\beta(\RR)$.
These spaces are characterized in \cite{grisvard}.
For example, it is known that for $\beta\in(0,1)$, the space $\Hd \beta\II$ coincides with
the interpolation space $[L^2\II,H_0^1\II]_\beta$. It is important for our further
study to note that $ \phi \in \Hd \beta\II$, $ \beta >\frac32$, satisfies the boundary conditions
$\phi(0)=\phi^\prime(0)=0$ and $\phi(1)=\phi^\prime(1)=0$.

Analogously, we define $\Hdi 0 \beta$
(respectively, $\Hdi 1 \beta$) to be the set of functions $u$ whose extension by zero
$\tu$ is in $H^\beta(-\infty,1)$ (respectively, $H^\beta(0,\infty)$). Here for $u\in \Hdi 0
\beta$, we set $\|u\|_{\Hdi 0\beta}:=\|\tu\|_{H^\beta(-\infty,1)}$ with the analogous definition
for the norm in $\Hdi 1 \beta$.
Let $\Cd0^\beta\II$ (respectively, $\Cd1^\beta\II$)
denote the set of functions in
$v\in C^\infty[0,1]$ satisfying $v(0)=v^\prime(0)=\ldots=v^{(k)}(0)=0$
(respectively, $v(1)=v^\prime(1)=\ldots=v^{(k)}(1)=0$) for
$k<\beta-1/2$. It is not hard to see that the spaces
$\Cd0^\beta\II$ and $\Cd1^\beta\II$ are dense in
the spaces $\Hdi 0 \beta$ and  $\Hdi1 \beta$, respectively.
Throughout, we denote by $\tu$ the extension of $u$ by zero to
$\RR$.

The first result forms the basis for our investigation of fractional derivatives in fractional
Sobolev spaces. It was known in \cite[Lemma 2.6]{ErvinRoop:2006}, and we provide an
alternative proof since the idea will be used later.

\begin{theorem} \label{l:extendR}
For any $\beta\in (n-1,n)$, the operators $\DDR 0 \beta u$ and $\DDR 1 \beta u$ defined for
$u\in C^\infty_0\II$ extend continuously to operators (still denoted by $\DDR 0 \beta u$
and $\DDR 1 \beta u$) from $\Hd \beta \II$ to $L^2\II$.
\end{theorem}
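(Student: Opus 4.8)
The plan is to transfer everything to constant-coefficient operators on the whole line $\RR$, where the Riemann--Liouville derivative becomes a Fourier multiplier, and then read off the $L^2$ bound from Plancherel's theorem. First I would fix $u\in C_0^\infty\II$ and let $\tu$ be its extension by zero, which is a Schwartz function. The initial observation is a localization: for $x\in\II$ the defining integral ${_0\hspace{-0.3mm}I^{n-\beta}_x}u$ is unchanged if its lower limit $0$ is replaced by $-\infty$, since $\tu$ vanishes on $(-\infty,0)$; hence ${_0\hspace{-0.3mm}I^{n-\beta}_x}u$ and ${_{-\infty}I^{n-\beta}_x}\tu$ agree on $\II$. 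Because $n$-fold differentiation is a local operation, it follows that $\DDR 0 \beta u = \DDRI 0 \beta \tu$ on $\II$, and consequently $\|\DDR 0 \beta u\|_{L^2\II}\le\|\DDRI 0 \beta \tu\|_{L^2(\RR)}$.

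Next I would identify the whole-line operator $\DDRI 0 \beta=\frac{d^n}{dx^n}\,{_{-\infty}I^{n-\beta}_x}$ as a Fourier multiplier. The fractional integral ${_{-\infty}I^{\gamma}_x}$ is convolution with the causal kernel $k_\gamma(x)=x_+^{\gamma-1}/\Gamma(\gamma)$, whose tempered-distributional Fourier transform is $(i\xi)^{-\gamma}$ in the principal branch. Combining this with the factor $(i\xi)^n$ produced by the $n$-fold derivative yields $\Four[\DDRI 0 \beta \tu](\xi)=(i\xi)^{\beta}\,\Four[\tu](\xi)$. Since $\tu$ is Schwartz and $\beta>0$, the symbol $(i\xi)^\beta$ vanishes at the origin and grows only polynomially, so $(i\xi)^{\beta}\Four[\tu]$ is square integrable and all the intermediate products are classical.

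Finally, I would invoke Plancherel's theorem together with the elementary bound $|\xi|^{2\beta}=(\xi^2)^\beta\le(1+\xi^2)^\beta$ to obtain
\begin{equation*}
\|\DDRI 0 \beta \tu\|_{L^2(\RR)}^2=c\int_{\RR}|\xi|^{2\beta}|\Four[\tu](\xi)|^2\,d\xi\le c\int_{\RR}(1+\xi^2)^\beta|\Four[\tu](\xi)|^2\,d\xi=c\,\|\tu\|_{H^\beta(\RR)}^2=c\,\|u\|_{\Hd\beta\II}^2.
\end{equation*}
This gives the continuity estimate $\|\DDR 0 \beta u\|_{L^2\II}\le c\,\|u\|_{\Hd\beta\II}$ on the dense subspace $C_0^\infty\II\subset\Hd\beta\II$, so $\DDR 0 \beta$ extends to a bounded operator from $\Hd\beta\II$ into $L^2\II$. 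The right-sided derivative $\DDR 1 \beta$ is handled identically: its whole-line counterpart $\DDRI 1 \beta$ carries the symbol $(-i\xi)^\beta$, of the same modulus $|\xi|^\beta$, so the same computation applies.

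The step I expect to require the most care is the Fourier-symbol computation, because the kernel $k_{n-\beta}$ with $n-\beta\in(0,1)$ is only locally integrable and its transform must be understood in the tempered-distribution sense. This difficulty is benign here for two reasons: restricting to $u\in C_0^\infty\II$ makes $\tu$ Schwartz, so every product under the integral is a genuine convergent integral, and since the final estimate uses only the modulus $|(i\xi)^\beta|=|\xi|^\beta$, the precise branch of the fractional power never enters the bound.
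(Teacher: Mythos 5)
Your proposal is correct and follows essentially the same route as the paper: extend by zero to the line, identify the whole-line Riemann--Liouville derivative as a Fourier multiplier of modulus $|\xi|^\beta$, and conclude by Plancherel. The only cosmetic differences are the sign convention in the symbol ($(i\xi)^\beta$ versus the paper's $(-i\omega)^\beta$, irrelevant since only the modulus is used) and that you pass to the limit via density of $C_0^\infty\II$ in $\Hd\beta\II$ whereas the paper first extends the whole-line operator to $H^\beta(\RR)$ and then restricts.
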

\begin{proof}
We first consider the left sided case. For $v\in C_0^\infty(\RR)$, we define
\begin{equation}
\big (\DDRI0\beta v\big)(x)
    = \frac{d^n}{dx^n}\bigg(\int_{-\infty}^x (x-t)^{\beta-n} v(t)\, dt\bigg). %^{(n)}.
\label{tdbeta}
\end{equation}
We note the following identity for the Fourier transform
\begin{equation}\label{fident}
\cF\big ( \DDRI 0 \beta v\big ) (\omega) =(-i\omega)^\beta \cF(v)(\omega), \quad \mbox{i.e.},
\quad \DDRI 0 \beta v (x ) = \cF^{-1} \big ((-i\omega)^\beta \cF(v)(\omega) \big),
\end{equation}
which holds for $v\in C_0^\infty(\RR)$ (cf. \cite[pp. 112, eq. (2.272)]{Podlubny_book} or
\cite[pp. 90, eq. (2.3.27)]{KilbasSrivastavaTrujillo:2006}). It follows from Planchel's theorem that
$$ \|  \DDRI 0\beta v\|_{L^2(\RR)} =
\| \cF\big ( \DDRI 0 \beta v\big)\|_{L^2(\RR)}\le c\|v\|_{H^\beta(\RR)}.$$
Thus, we can contiuously extend $\DDRI 0\beta  $ to an operator from
$H^\beta(\RR)$ into $L^2(\RR)$ by formula \eqref{fident}.

We note that for $u\in C_0^\infty\II$, there holds
\begin{equation}\label{ddext}
\DDR0\beta u = \DDRI0\beta \tu |_\II.
\end{equation}
By definition,  $u\in \Hd \beta \II$ implies that
$\tu$ is in $H^\beta(\RR)$ and hence
\begin{equation*}
\|\DDRI0 \beta \tu \|_{L^2(\RR)} \le c \|u\|_{\Hd \beta\II}.
\end{equation*}
Thus, formula \eqref{ddext} provides an extension of the operator $\DDR0\beta$
defined on $C_0^\infty\II$ to a bounded operator from
the space $\Hd \beta\II$ into $L^2\II$.

The right sided case is essentially identical except for replacing
 \eqref{tdbeta} and  \eqref{fident} with
\begin{equation*} %\label{tdbeta1}
\big(\DDRI1\beta v\big)(x)
        =(-1)^n \frac{d^n}{dx^n}\bigg (\int^{\infty}_x (t-x)^{\beta-n} v(t)\,dt\bigg)
\end{equation*}
and
\begin{equation*} %\label{fident2}
\cF \big(\DDRI1\beta v\big) (\omega) =(i\omega)^\beta \cF(v)(\omega).
\end{equation*}
This completes the proof of the theorem.
\end{proof}

\begin{remark} \label{indep}
We clearly have that $\DDRI0\beta v(x) =0$ for $x<1$ when $v\in\Cd0^\beta(0,2)$ is
supported on the interval $[1,2]$. By density, this also holds for
$v\in \widetilde{H}_L^\beta (0,2)$ supported on $[1,2]$.
\end{remark}

The next result slightly relaxes the conditions in Theorem \ref{l:extendR}.
\begin{theorem} \label{l:extendR1}
For  $\beta\in (n-1,n)$, the operator $\DDR0\beta u$ defined for $u\in \Cd 0^\beta\II$
extends continuously to an operator from $\Hdi 0 \beta$ to $L^2\II$. Similarly, the
operator $\DDR1\beta $ defined for $u\in \Cd 1^\beta\II$ extends continuously to an
operator from $\Hdi 1 \beta$ to $L^2\II$.
\end{theorem}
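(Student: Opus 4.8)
The plan is to reduce the statement to Theorem \ref{l:extendR} by exploiting that the left-sided operator $\DDR0\beta$ is local from the left: its value at a point $x\in\II$ depends only on the values of $u$ on $(0,x)$. The obstruction to applying Theorem \ref{l:extendR} directly is that for $u\in\Hdi0\beta$ the zero extension $\tu$ lies only in $H^\beta(-\infty,1)$ and need not belong to $H^\beta(\RR)$, since $u$ and its derivatives need not vanish at $x=1$. I will circumvent this by extending $u$ across the right endpoint and then discarding the ambiguous part using Remark \ref{indep}.

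First I would fix a bounded extension. Given $u\in\Hdi0\beta$, I construct a function $U$ on $(0,2)$ with $U|_\II=u$, $U\in\Hd\beta(0,2)$, and $\|U\|_{\Hd\beta(0,2)}\le c\|u\|_{\Hdi0\beta}$. Concretely, one takes a Sobolev extension of $u$ across $x=1$ and multiplies it by a smooth cut-off equal to $1$ on a neighborhood of $[0,1]$ and vanishing to high order near $x=2$; the regularity of $\tu$ past $x=0$ (built into membership in $\Hdi0\beta$) together with the cut-off near $x=2$ guarantees that the zero extension $\widetilde U$ of $U$ to $\RR$ lies in $H^\beta(\RR)$. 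Applying Theorem \ref{l:extendR} on $(0,2)$ (equivalently, the real-line operator $\DDRI0\beta$ from \eqref{fident}, via \eqref{ddext}) then yields $\DDRI0\beta\widetilde U\in L^2(\RR)$ with $\|\DDRI0\beta\widetilde U\|_{L^2(\RR)}\le c\|\widetilde U\|_{H^\beta(\RR)}\le c\|u\|_{\Hdi0\beta}$. I then define $\DDR0\beta u:=(\DDRI0\beta\widetilde U)|_\II$, whence $\|\DDR0\beta u\|_{L^2\II}\le c\|u\|_{\Hdi0\beta}$, which is the asserted continuity.

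The two points that remain — and where the content lies — are that this definition is independent of the chosen extension and that it agrees with the classical $\DDR0\beta u$ when $u\in\Cd0^\beta\II$. For independence, if $U_1,U_2$ are two admissible extensions, then $U_1-U_2$ vanishes on $\II$ and hence, as an element of $\widetilde H^\beta(0,2)\subset\widetilde H_L^\beta(0,2)$, is supported on $[1,2]$; Remark \ref{indep} then gives $\DDRI0\beta(\widetilde U_1-\widetilde U_2)=0$ on $(0,1)$, so the two restrictions coincide. For consistency, when $u\in\Cd0^\beta\II$ one may take $U$ smooth, and for $x\in\II$ the integral representation \eqref{tdbeta} of $\DDRI0\beta\widetilde U(x)$ involves only $t<x<1$, where $\widetilde U=\tu$; it therefore reduces to the classical expression $\frac{d^n}{dx^n}\big({}_0 I_x^{n-\beta}u\big)(x)=\DDR0\beta u(x)$. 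Combined with the density of $\Cd0^\beta\II$ in $\Hdi0\beta$, this identifies the constructed map as the unique continuous extension. The hard part will be the bookkeeping in this last step, i.e., making rigorous that the restriction to $\II$ does not see the extension past $x=1$ — precisely the locality encoded in Remark \ref{indep}. The right-sided operator $\DDR1\beta$ is handled symmetrically, extending across the left endpoint $x=0$ and invoking the right-sided analogue of Remark \ref{indep}.
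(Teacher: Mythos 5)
Your proposal is correct and follows essentially the same route as the paper: a bounded extension of $u$ to $\Hd\beta(0,2)$, definition of $\DDR0\beta u$ by restricting $\DDRI0\beta$ of the zero-extension to $\II$, with Remark \ref{indep} supplying both the independence of the extension and the consistency with the formal definition on $\Cd0^\beta\II$. The paper states this more tersely, but the construction and the key use of Remark \ref{indep} are identical to yours.
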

\begin{proof}
We only prove the result for the left sided derivative since the proof for the right sided case is
identical. For a given $u\in \Hdi 0 \beta$, we let $u_0$ denote
a bounded extension of $u$ to $\Hd  \beta  (0,2)$, and then set
$$\DDR0\beta u= {\DDR0\beta} \tu_0|_\II.$$
It is a direct consequence of Remark~\ref{indep} that
 $\DDR0\beta u$ is independent of the
extension $u_0$ and coincides with the formal definition of
$\DDR0 \beta u$ when $u\in \Cd0^\beta\II$.  We obviously have
\begin{equation*}
\|\DDR0\beta u\|_{L^2\II} \le  \|\DDRI0\beta \tu_0\|_{L^2(\RR)} \le c \|u\|_{\Hdi 0 \beta }.
\end{equation*}
This completes the proof.
\end{proof}

\begin{remark}
Even though $\DDR0\beta$ makes sense as an operator from $\Hdi 0 \beta$ into
$L^2\II$, it apparently cannot be extended to useful larger spaces, for example,
$\DDR0\beta(1) = c_\beta x^{-\beta}$ is not in the space $L^2\II$, if $\beta>\frac{1}{2}$.
Accordingly, it is not obvious in what space one should seek the solutions of
\eqref{strongp}. However, it is clear that, at least for $\alpha\in (3/2,2)$,
the solution is not generally in $\Hd \alpha \II$ since such functions
satisfy the additional  boundary conditions $u^\prime(0)=u^\prime(1)=0$.
\end{remark}

\begin{remark}\label{rmk:extcaputo}
Lemma~\ref{l:caprl} below shows that for $\beta\in (0,1)$, the operators $\DDC0\beta$ and $\DDR0\beta$
coincide on the space $\Hdi 0 1$. Thus, the continuous extension of $\DDC0\beta$ defined on $\Cd
0^\beta \II$  to $\Hdi 0 \beta$ is $\DDC0\beta$. This is somewhat misleading since the
extension does not necessarily coincide with the formal definition, e.g., $1$ is in
$\Hdi 0 \beta$ for $\beta\in (0,1/2)$,  $ \DDR0\beta (1)=c_\beta x^{-\beta}$  while
$\DDC0\beta (1) =  0$. The problem is that the continuous extension of the Caputo
derivative $\DDR0\beta$ from $\Cd0^\beta\II$ to $\Hdi 0 \beta$ does not necessarily
coincide with the formal definition of $\DDC0\beta$ even on functions for which the formal
definition does make sense.
\end{remark}

\section{Strong solutions of fractional order equations}\label{sec:strongsol}

The problem we now face is to make some sense out of the model \eqref{strongp}. We start by studying
the case of $q=0$. We shall construct functions whose fractional derivatives make sense and satisfy
\eqref{strongp}. The following smoothing property of fractional integral operators $_0I_x^\beta $
and $_xI_1^\beta$ will play a central role in the construction. We note that the smoothing property
of $_0I_x^\beta$ in $L^p\II$ spaces and H\"{o}lder spaces was
studied extensively in \cite[Chapter 1, \S 3]{SamkoKilbasMarichev:1993}.

\begin{theorem}  \label{l:i0reg}
For any $s,\beta\geq 0$, the operators $_0I_x^\beta $ and $_xI_1^\beta$ are bounded maps from $\Hd s \II$ into $\Hdi 0
{s+\beta}$ and $\Hdi1{s+\beta}$, respectively.
\end{theorem}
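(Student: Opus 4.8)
The plan is to realize ${}_0I_x^{\beta}$ as a convolution and reduce the claim to a Fourier‑multiplier estimate on $\RR$, in the spirit of Theorem~\ref{l:extendR}. I treat the left‑sided operator; the right‑sided one is identical after the reflection $x\mapsto 1-x$. Given $u\in\Hd s\II$, its zero‑extension $\tu$ lies in $H^s(\RR)$ and is supported in $[0,1]$. Writing $k_\beta(y)=y^{\beta-1}/\Gamma(\beta)$ for $y>0$ and $k_\beta(y)=0$ for $y\le0$ for the causal kernel, one has $({}_0I_x^{\beta}u)(x)=(k_\beta*\tu)(x)$ for $x\in\II$, and $k_\beta*\tu$ vanishes on $(-\infty,0)$. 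Consequently the extension by zero of ${}_0I_x^{\beta}u$ to $(-\infty,1)$ is exactly $(k_\beta*\tu)|_{(-\infty,1)}$, and to prove membership in $\Hdi 0{s+\beta}$ with the desired bound it suffices to exhibit a function $v\in H^{s+\beta}(\RR)$ that agrees with $k_\beta*\tu$ on $(-\infty,1)$ and satisfies $\|v\|_{H^{s+\beta}(\RR)}\le c\|u\|_{\Hd s\II}$.

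The main obstacle is that $k_\beta*\tu$ is not itself an admissible $v$: as $x\to+\infty$ it behaves like $x^{\beta-1}\int_0^1u$, which is not square‑integrable once $\beta\ge\tfrac12$; equivalently, the natural multiplier $(-i\omega)^{-\beta}$ (the inverse of the symbol $(-i\omega)^{\beta}$ from \eqref{fident}) has a nonintegrable singularity at $\omega=0$. I would remove this tail by truncating the kernel. Fix $\rho\in C_0^\infty(\RR)$ with $\rho\equiv1$ on $[-1,1]$ and $\operatorname{supp}\rho\subset(-2,2)$, and set $K=\rho\,k_\beta$ and $v=K*\tu$. Since $K\in L^1(\RR)$ has compact support, $v\in L^2(\RR)$. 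The point is that $k_\beta-K=(1-\rho)k_\beta$ is supported in $\{y\ge1\}$, so for every $x<1$ the difference $\big((k_\beta-K)*\tu\big)(x)=\int (1-\rho)(x-t)\,k_\beta(x-t)\,u(t)\,dt$ vanishes, because $u(t)\ne0$ forces $t\in[0,1]$ and hence $x-t<1$. Thus $v=k_\beta*\tu$ on $(-\infty,1)$, so $v$ is a genuine global extension of the zero‑extension of ${}_0I_x^{\beta}u$.

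It remains to bound $\|v\|_{H^{s+\beta}(\RR)}$. Because $v=K*\tu$ with $K\in L^1$, the convolution theorem gives $\cF v=\cF K\cdot\cF\tu$, and the crux is the kernel estimate $|\cF K(\omega)|\le c(1+|\omega|)^{-\beta}$: the transform $\cF K$ is bounded (as $K\in L^1$), while its $|\omega|^{-\beta}$ decay at high frequency comes from the single endpoint singularity $y_+^{\beta-1}$ of $K$ at the origin (with $y_+:=\max(y,0)$), i.e. from the Fourier transform of $y_+^{\beta-1}$ being homogeneous of degree $-\beta$ — a standard fact valid for all $\beta>0$, including integer $\beta$, where it follows by integration by parts and accounting for the resulting jump. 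Granting this, and using $(1+\omega^2)^{\beta}(1+|\omega|)^{-2\beta}\le1$ (since $1+\omega^2\le(1+|\omega|)^2$), I obtain
\[
\|v\|_{H^{s+\beta}(\RR)}^2=\int_\RR(1+\omega^2)^{s+\beta}\,|\cF K(\omega)|^2\,|\cF\tu(\omega)|^2\,d\omega\le c\int_\RR(1+\omega^2)^{s}\,|\cF\tu(\omega)|^2\,d\omega=c\,\|u\|_{\Hd s\II}^2 .
\]
Since the norm on $\Hdi 0{s+\beta}$ is the restriction norm and $v$ restricts to the zero‑extension of ${}_0I_x^{\beta}u$ on $(-\infty,1)$, this gives $\|{}_0I_x^{\beta}u\|_{\Hdi 0{s+\beta}}\le\|v\|_{H^{s+\beta}(\RR)}\le c\|u\|_{\Hd s\II}$, proving the left‑sided bound; the right‑sided case follows verbatim with the anticausal kernel $(-y)_+^{\beta-1}/\Gamma(\beta)$ and multiplier $(i\omega)^{-\beta}$. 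I expect the only technical point to be the uniform kernel bound $|\cF K(\omega)|\le c(1+|\omega|)^{-\beta}$ for large $\beta$, which is precisely where the homogeneity of $\cF[y_+^{\beta-1}]$ enters.
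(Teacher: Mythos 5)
Your argument is correct, but it handles the central difficulty in a way that is genuinely dual to the paper's. Both proofs are Fourier-multiplier arguments built on the symbol $(-i\omega)^{-\beta}$ from \eqref{fident1}, and both must deal with the fact that this symbol is non-integrable at $\omega=0$ (equivalently, that $k_\beta*\tu$ has a non-decaying tail $\sim x^{\beta-1}\int_0^1 u$). The paper fixes the \emph{function}: it extends $f$ to $(0,2)$ by subtracting weighted orthogonal polynomials supported on $(1,2)$ so that the first $k>\beta-1/2$ moments of $\tf_e$ vanish, which forces $|\cF(\tf_e)(\omega)|\le c|\omega|^k\|f\|_{L^2\II}$ near the origin and cancels the singularity of $|\omega|^{-2\beta}$ there; since the correction lives on $(1,2)$ it does not change ${_0I_x^\beta}f$ on $\II$. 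You instead fix the \emph{kernel}: truncating $k_\beta$ by a cutoff $\rho$ leaves the convolution unchanged on $(-\infty,1)$ (your support argument for $(1-\rho)k_\beta$ is exactly right) and replaces the singular symbol by the bounded multiplier $\cF(\rho k_\beta)$, so the low-frequency problem disappears and only the high-frequency decay $|\cF(\rho k_\beta)(\omega)|\le c(1+|\omega|)^{-\beta}$ is needed. Your route avoids the moment-killing construction entirely and is arguably cleaner, but it shifts the real work into that kernel estimate, which you assert rather than prove: for non-integer $\beta$, passing from the distributional homogeneity of $\cF[y_+^{\beta-1}]$ to a pointwise bound on $\cF(\rho\,y_+^{\beta-1})$ requires either the convolution-with-Schwartz argument (splitting near and away from the singular frequency, with some care when $\beta\ge 1$ since $|\omega|^{-\beta}$ is then not locally integrable and the homogeneous distribution is a finite part) or a direct scaling/Erd\'elyi-type oscillatory-integral estimate. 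This is classical and true, so there is no gap of substance, but it deserves a reference or a short proof, whereas the paper's low-frequency estimate is an elementary Taylor expansion once the moments vanish. Both arguments correctly explain why the target space is $\Hdi 0{s+\beta}$ rather than $\Hd{s+\beta}\II$, and your reduction of the right-sided case by the reflection $x\mapsto 1-x$ is fine. (The degenerate case $\beta=0$ is the identity operator and should be noted separately, since $y_+^{-1}$ is not integrable.)
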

\begin{proof}
The key idea of the proof is to extend $f\in \Hd s \II$ to a function $\tf \in  \Hd s (0,2)$ whose moments
up to $(k-1)$th order vanish with $k>\beta-1/2$. To this end, we employ
orthogonal polynomials $\{p_0,p_1,\ldots,p_{k-1}\}$ with respect to the inner
product $\langle\cdot,\cdot\rangle$ defined by
\begin{equation*}
\langle u,v\rangle=\int_1^2 ((x-1)(2-x))^l u(x)v(x) \, dx,
\end{equation*}
where the integer $l$ satisfies $l>s-1/2$ so that
\begin{equation*}
((x-1)(2-x))^l p_i \in \Hd s (1,2),\qquad i=0,\ldots,k-1.
\end{equation*}
Then we set $w_j=\gamma_j ((x-1)(2-x))^l p_j$ with $\gamma_j$ chosen so that
\begin{equation*}
   \int_1^2 w_j p_j\, dx = 1 \quad \text{so that} \quad \int_1^2 w_j p_l\, dx = \delta_{j,l},
   \quad j,l=0, \dots, k-1.
\end{equation*}
Next we extend both $f$ and $w_j$, $j=0,\ldots, k-1$ by zero to $(0,2)$ by setting
\begin{equation*}
  f_e = f-\sum_{j=0}^{k-1} \bigg(\int_0^1 f p_j\, dx\bigg) w_j.
\end{equation*}
The resulting function $f_e$ has vanishing moments for $j=0,\ldots,k-1$ and by construction
it is in the space $\Hd s (0,2)$. Further, obviously there holds the inequality
$\|f_e\|_{L^2(0,2)}\leq C\|f\|_{L^2\II}$, i.e., the extension is bounded in $L^2\II$. As usual,
we denote by $\tf_e$ the extension of $f_e$ to $\RR$ by zero.

Now on the interval $(0,1)$, there holds ${_0I_x^\beta} f = {_{-\infty}I_x^\beta} \tf_e$, where
\begin{equation*}
_{-\infty}I_x^\beta(\tf)(x)=  \frac 1 {\Gamma(\beta)} \int_{-\infty}^x
(x-t)^{\beta-1} \tf(t)dt.
\end{equation*}
Meanwhile, we have (see \cite[pp. 112, eq. (2.272)]{Podlubny_book} or
\cite[pp. 90, eq. (2.3.27)]{KilbasSrivastavaTrujillo:2006})
\begin{equation} \label{fident1}
\cF ({_{-\infty}I_x^\beta} \tf_e) (\omega) =(-i\omega)^{-\beta} \cF(\tf_e)(\omega)
\end{equation}
and hence by Planchel's theorem
\begin{equation}\label{oint}
  % J_0 :=
  \|{_{-\infty}I^\beta_x}\tf_e\|^2_{L^2(\RR)} =
  \int_\RR |\omega|^{-2\beta} |\cF(\tf_e)(\omega)|^2 \, d\omega.
\end{equation}
We note that by Taylor expansion at $0$, there holds
\begin{equation*}
  \begin{aligned}
    e^{-i\omega x} - & 1-(-i\omega) x -\cdots- \frac {(-i\omega)^{k-1}x^{k-1}}{(k-1)!} \\
   =& \frac{(-i\omega x)^k}{k!}+\frac{(-i\omega x)^{k+1}}{(k+1)!}+\frac{(-i\omega x)^{k+2}}{(k+2)!}+\cdots = {(-i\omega)^{k}} I_0^k (e^{-i\omega x}).
  \end{aligned}
\end{equation*}
Clearly, there holds $|I_0^k (e^{-i\omega x})|\le I_0^k (1) = \frac {x^k}{k!}$.
Since the first $k$ moments of $\tilde f_e$ vanish, multiplying the above identity by
$\widetilde f_e$ and integrating gives
$$\cF(\tf_e)(\omega) = {(-i\omega)^{k}} \frac 1 {\sqrt{2\pi}} \int_\RR I_0^k
(e^{-i\omega x}) \tf_e(x) \, dx,$$
and upon noting $\mathrm{supp}(\widetilde{f}_e)\subset(0,2)$, consequently,
$$|\cF(\tf_e)(\omega)|\le \frac {2^{k}} {\sqrt \pi k!}  |\omega|^k
\|f_e \|_{L^2(0,2)} \le c |\omega|^k \| f \|_{L^2\II}.$$
We then have
\begin{equation*}
  \begin{aligned}
    \|{_{-\infty}I_x^\beta}\tf_e\|^2_{H^{\beta+s}(\RR)} &=
      \int_\RR (1+|\omega|^2)^{\beta+s} |\omega|^{-2\beta}|\cF(\tf_e)(\omega)|^2 d\omega\\
    &\le c_1 \|f\|_{L^2\II}\int_{|\omega|<1}   |\omega|^{-2\beta+2k}d \omega
    +c_2\int_{|\omega|>1}  |\omega|^{2s}|\cF(\tf_e)(\omega)|^2d\omega \\
    &\le c\|f\|_{\Hd s  \II}.
  \end{aligned}
\end{equation*}
The desired assertion follows from this and the trivial inequality $\|{_0I_x^\beta} f\|_{H^{\beta+s}\II}
\leq\|{_{-\infty}I^\beta_x}\tf_e\|^2_{H^{\beta+s}(\RR)}$.
\end{proof}

\begin{remark}\label{rmk:i0}
By means of the extension in Theorem \ref{l:extendR1}, the operator
$_0I_x^\beta$ is bounded from $\Hdi0 s$ to $\Hdi0{\beta+s}$,
and $_xI_1^\beta$ is bounded from $\Hdi1 s$ to $\Hdi1{\beta+s}$.
\end{remark}

A direct consequence of Theorem \ref{l:i0reg} is the following useful corollary.
\begin{corollary}\label{phi-0}
Let $\gamma$ be non-negative. Then the functions $x^\gamma$ and $(1-x)^{\gamma}$ belong
to $\Hdi 0 {\beta}$ and $\Hdi 1 {\beta}$, respectively, for any $0\le \beta <\gamma+1/2$.
\end{corollary}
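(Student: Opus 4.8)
The plan is to realize the power function $x^\gamma$ as a Riemann--Liouville fractional integral of the constant function and then to invoke the mapping property of Theorem \ref{l:i0reg}. A direct evaluation of the defining integral gives the classical power rule
$$ {_0I_x^\gamma}(1)(x) = \frac{1}{\Gamma(\gamma)}\int_0^x (x-t)^{\gamma-1}\,dt = \frac{x^\gamma}{\Gamma(\gamma+1)}, $$
so that $x^\gamma = \Gamma(\gamma+1)\,{_0I_x^\gamma}(1)$ is, up to a harmless constant, the image of the constant function $1$ under ${_0I_x^\gamma}$. The right-sided situation is identical, with ${_xI_1^\gamma}(1)(x) = (1-x)^\gamma/\Gamma(\gamma+1)$.

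First I would record the Sobolev smoothness of the constant function. Extending $1$ by zero to $\RR$ produces the indicator function of $\II$, whose only singularities are the unit jumps at the two endpoints; such a function lies in $H^s(\RR)$ exactly when $s<1/2$. Hence $1\in\Hd s\II$ for every $s\in[0,1/2)$, and this sharp range is precisely what manufactures the extra half-derivative appearing in the conclusion.

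Next, given $\beta$ with $0\le\beta<\gamma+1/2$, I would set $s:=\max(0,\beta-\gamma)$, which lies in $[0,1/2)$ because $\beta<\gamma+1/2$. Applying Theorem \ref{l:i0reg} with smoothness index $s$ and integration order $\gamma$ shows that ${_0I_x^\gamma}$ maps $\Hd s\II$ boundedly into $\Hdi 0 {s+\gamma}$, whence $x^\gamma\in\Hdi 0 {s+\gamma}$. Since $s+\gamma\ge\beta$ by the choice of $s$, the continuous embedding $\Hdi 0 {s+\gamma}\hookrightarrow\Hdi 0\beta$ (an inclusion of a higher- into a lower-order Sobolev space for the zero extensions) gives $x^\gamma\in\Hdi 0\beta$, as desired. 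The argument for $(1-x)^\gamma$ is word-for-word the same, using ${_xI_1^\gamma}$ and the right-sided space $\Hdi 1\beta$.

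The only genuinely delicate point is the membership $1\in\Hd s\II$ for all $s<1/2$ together with its failure at $s=1/2$; everything else is the elementary power-rule identity and a direct application of the mapping theorem already in hand. It is worth noting that $x^\gamma$ cannot belong to the symmetric space $\Hd\beta\II$ for $\beta$ near $\gamma+1/2$, since $x^\gamma$ does not vanish at $x=1$; this is exactly why the natural target is the one-sided space $\Hdi 0\beta$ rather than $\Hd\beta\II$.
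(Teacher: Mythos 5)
Your proof is correct and takes essentially the same route as the paper: both realize $x^\gamma$ (up to the constant $\Gamma(\gamma+1)$) as ${_0I_x^\gamma}(1)$, use that the constant function lies in the order-$s$ spaces for every $s\in[0,1/2)$, and then invoke the mapping property of Theorem \ref{l:i0reg}. Your write-up merely fills in the details (the explicit choice $s=\max(0,\beta-\gamma)$ and the downward embedding) that the paper leaves implicit.
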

\begin{proof} We note the relations
$x^\gamma = c_\gamma \, {_0I_x^\gamma}(1)$ and
 $(1-x)^\gamma=c_\gamma \, {_xI_1^\gamma} (1)$.
The desired result follows from Theorem~\ref{l:i0reg} and the fact that $1\in \Hdi0 \delta$ and $1\in \Hdi 1
\delta$ for any $\delta\in [0,1/2)$.
\end{proof}

Now we are in a position to construct the strong solutions to problem \eqref{strongp}
in the case of a vanishing potential, i.e., $q=0$. We first consider the Riemann-Liouville
case. Here we fix $f\in L^2\II$ and set $g={_0I_x^\alpha} f \in \Hdi 0 \alpha$.  By
Theorem~\ref{l:extendR1}, the fractional derivative $\DDR0\alpha g$ is well defined.
Now in view of the semigroup property \eqref{semig}, we deduce
\begin{equation*}
{_0I_x^{2-\alpha}} g = {_0I_x^{2-\alpha}}{_0I_x^\alpha} f = {_0I_x^2}f\in \Hdi 0 2.
\end{equation*}
It is straightforward to check that $({_0I_x^2}f)^{\prime\prime}=f$ holds for smooth
$f$ and hence also on $L^2\II$ by a density argument. This implies that $\DDR0\alpha g=f$. (This
relation is reminiscent of the fundamental theorem in calculus.) We thus find that
\begin{equation} \label{strongrl}
u= -{_0I_x^\alpha} f +({_0I_x^\alpha} f)(1) x^{\alpha -1}
\end{equation}
is a solution of \eqref{strongp} in the Riemann-Louiville case when $q=0$ since
$u$ satisfies the correct boundary conditions and $\DDR0\alpha x^{\alpha-1}=
(c_\alpha x)^{\prime\prime}=0$.

Next we consider the Caputo case. To this end, we choose $\beta\geq0$ so that $\alpha+
\beta\in (3/2,2)$. For smooth $u$ and $\alpha\in (1,2)$, the Caputo and Riemann-Liouville
derivatives are related by (cf. \eqref{rl-c})
\begin{equation*}
\DDC0\alpha u={\DDR0\alpha} u- \frac{u(0)} {\Gamma(1-\alpha)} x^{-\alpha}-\frac
{u^{\prime}(0)} {\Gamma(2-\alpha)} x^{1-\alpha}.
\end{equation*}
Applying this formula to $g={_0I_x^\alpha} f$ for $f\in \Hd \beta \II$ (in this case $g$
is in $\Hdi 0{\alpha+\beta}$ by Theorem \ref{l:i0reg} and hence satisfies $g(0)=g^\prime(0)=0$)
shows that $ \DDC0\alpha g$ makes sense and equals $ \DDR0\alpha g=f$. Thus a solution $u$
of \eqref{strongp} in the Caputo case with $q=0$ is given by
\begin{equation} \label{strongc}
u= -{_0I_x^\alpha} f +   ({_0I_x^\alpha} f)(1) \, x.
\end{equation}

\begin{remark}
The solution representation of the model \eqref{strongp} with a Caputo fractional derivative $\DDC0\alpha u$
and low-regularity source term $f\in \Hd \beta\II $ such that $\alpha+\beta\leq3/2$ remains unclear.
\end{remark}

\section{Variational formulations of the fractional derivative problems}\label{sec:stability}

In this part we develop variational formulations of problem \eqref{strongp}
and establish shift theorems for the variational solution. The shift
theorems show the regularity pickup for the variational solution,
which will be essential for deriving error estimates in Section \ref{sec:fem}. We
shall first consider the case of $q=0$ where we have an explicit
representation of the solutions.

\subsection{Derivation of the variational formulations}
The starting point of the derivation of the variational formulations is the following lemma.

\begin{lemma} \label{l:caprl}
For $u\in \Hdi 0 1$  and $\beta\in (0,1)$,
\begin{equation*}
\DDR 0\beta u = {_0I_x^{1-\beta}}(u^\prime).
\end{equation*}
Similarly, for $u\in \Hdi 1 1$  and $\beta\in (0,1)$,
\begin{equation*}
   \DDR1\beta u =-{_x I_1^{1-\beta}}(u^\prime).
\end{equation*}
\end{lemma}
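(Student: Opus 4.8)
The plan is to verify the identity $\DDR0\beta u = {_0I_x^{1-\beta}}(u')$ for $u\in\Cd0^1\II$ (smooth functions vanishing to the appropriate order at $0$, which are dense in $\Hdi01$) and then pass to the limit using continuity of both sides. Since $\beta\in(0,1)$ we have $n=1$, so the Riemann--Liouville derivative is $\DDR0\beta u = \frac{d}{dx}\big({_0I_x^{1-\beta}}u\big)$ by definition \eqref{Riemann}. The core of the argument is therefore to show that differentiation commutes past the fractional integral onto $u'$, i.e.\ $\frac{d}{dx}\big({_0I_x^{1-\beta}}u\big) = {_0I_x^{1-\beta}}(u')$ when $u(0)=0$.

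First I would establish this commutation for smooth $u$ with $u(0)=0$. The cleanest route is the semigroup property \eqref{semig}: write ${_0I_x^{1-\beta}}(u') = {_0I_x^{1-\beta}}\big(\frac{d}{dx}u\big)$ and use that, because $u(0)=0$, integration by a full order recovers $u$, namely ${_0I_x^{1}}(u') = u$. Then
\begin{equation*}
{_0I_x^{1-\beta}}(u') = {_0I_x^{-\beta}}\,{_0I_x^{1}}(u') = {_0I_x^{-\beta}}u = \frac{d}{dx}\,{_0I_x^{1-\beta}}u,
\end{equation*}
where the last equality is just the definition \eqref{Riemann}; alternatively, and more rigorously, I would avoid the negative-order operator and instead differentiate the absolutely continuous function ${_0I_x^{1-\beta}}u$ directly, integrating by parts inside $\frac{d}{dx}\frac{1}{\Gamma(1-\beta)}\int_0^x (x-t)^{-\beta}u(t)\,dt$: moving the $t$-derivative off $u$ onto the kernel and using $u(0)=0$ to kill the boundary term at $t=0$ produces exactly ${_0I_x^{1-\beta}}(u')$. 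The boundary term at $t=x$ vanishes because the kernel weight there combines with the smoothing integral. This computation is standard but needs the $u(0)=0$ condition, which is precisely what membership in $\Cd0^1\II\subset\Hdi01$ guarantees.

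The main obstacle is the density/continuity argument that lifts the identity from smooth functions to all of $\Hdi01$. Both sides must be shown to be continuous $\Hdi01\to L^2\II$. For the left side, Theorem \ref{l:extendR1} with $n=1$ gives exactly that $\DDR0\beta$ extends continuously from $\Cd0^\beta\II$ to $\Hdi0\beta$, and since $\Hdi01\hookrightarrow\Hdi0\beta$ for $\beta<1$, the left side is controlled by $\|u\|_{\Hdi01}$. For the right side, the map $u\mapsto u'$ is bounded from $\Hdi01$ into $L^2\II$, and Remark \ref{rmk:i0} (or Theorem \ref{l:i0reg} with $s=0$) shows ${_0I_x^{1-\beta}}$ is bounded from $L^2\II=\Hdi00$ into $\Hdi0{1-\beta}\hookrightarrow L^2\II$; composing gives continuity. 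Since $\Cd0^1\II$ is dense in $\Hdi01$ (asserted in the preliminaries) and the two continuous operators agree on this dense set, they agree on all of $\Hdi01$. The right-sided identity $\DDR1\beta u = -{_xI_1^{1-\beta}}(u')$ follows by the identical argument applied to the reflected variable $x\mapsto 1-x$, where the extra minus sign arises from the chain rule in differentiating the right-sided integral; alternatively one invokes the change-of-integration formula \eqref{eqn:intpart} to transfer the left-sided result. This completes the plan.
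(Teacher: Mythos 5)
Your proposal is correct and follows essentially the same route as the paper: verify the identity on a dense set of smooth functions vanishing appropriately at the endpoint, then extend by continuity using Theorem \ref{l:extendR1} for the left-hand side and Theorem \ref{l:i0reg} for the right-hand side. The only cosmetic difference is that the paper obtains the smooth-function identity by citing the Caputo--Riemann--Liouville relation \eqref{rl-c} together with the definition \eqref{Caputo} (which, when $u(0)=0$, amounts to exactly your integration-by-parts computation), rather than deriving the commutation of $d/dx$ with the fractional integral from scratch.
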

\begin{proof}
It suffices to prove the result for the left sided derivative $\DDR0\beta$ since
the right sided case is analogous. For $u\in \Cd 0^\beta  \II$, \eqref{rl-c} implies that
\begin{equation}\label{smid}
\DDR0\beta u = \DDC0\beta u = {_0I_x^{1-\beta}}(u^\prime).
\end{equation}
Theorem~\ref{l:extendR1} implies that the left hand side extends to a continuous
operator on $\Hdi 0 \beta$ and hence $\Hdi 0 1$ into $L^2\II$. Meanwhile,
Theorem~\ref{l:i0reg} implies that the right hand side of \eqref{smid} extends to
a bounded operator from $H^ 1\II$ into $L^2\II$.  The lemma now follows by density.
\end{proof}

We next motivate the variational formulation in the Riemann-Liouville
case. Upon taking $u$ as in \eqref{strongrl}, $g={_0I_x^\alpha}f\in \Hdi0\alpha$
and $v\in C_0^\infty\II$, Lemma~\ref{l:caprl} implies
\begin{equation}\label{rlvar}
\begin{aligned}
  (\DDR0\alpha u,v) &= -\big(({_0I^{2-\alpha}_x} g)^{\prime\prime},v\big) =\big(({_0I^{2-\alpha}_x} g)^{\prime},v^\prime\big) \\
   &=\big({\DDR0{\alpha-1}} g,v^\prime\big) = ({_0I_x^{2-\alpha}}g',v'),
   %=-\big({\DDR0{\alpha-1}} u,v^\prime\big)=-\big({_0I^{2-\alpha}_x} u^{\prime},v^\prime\big),
\end{aligned}
\end{equation}
where we have used the identity $\DDR0{\alpha} x^{\alpha-1}=0$
in the second step. %the identities $\DDR0{\alpha} x^{\alpha-1}=0$ and $(\DDR0{\alpha-1}
%x^{\alpha-1},v^\prime)=0$ in the second and fifth steps, respectively.
Now the semigroup property \eqref{semig} and the change of integration order formula \eqref{eqn:intpart} yield
\begin{equation}
(\DDR0\alpha u,v)= \big({_0I^{1-\alpha/2}_x} g',{_xI_1^{1-{\alpha/2}}}v^\prime\big).
   %-\big({_0I^{1-\alpha/2}_x} u^\prime,{_xI_1^{1-{\alpha/2}}}v^\prime\big).
\label{rl-first}
\end{equation}
Since $g\in \Hdi0{\alpha}$ and $v\in\Hd 1\II$, we can apply Lemma~\ref{l:caprl} again to conclude
\begin{equation*}
  (\DDR0\alpha u,v) = -(\DDR0{\alpha/2} g,\ {\DDR1{\alpha/2}}v).
\end{equation*}
Further direct computation shows that $(\DDR0{\alpha/2} x^{\alpha-1},{\DDR1{\alpha/2}}v) = (\DDR0{\alpha-1}x^{\alpha-1},v')=0$.
Consequently,
\begin{equation}\label{adef}
   A(u,v):=-(\DDR0\alpha u,v)=-(\DDR0{{\alpha/2}}u,\,\DDR1{{\alpha/2}}v).
\end{equation}
Later in Lemma \ref{coercive}, we shall show that $A(u,v)$ is a coercive and bounded bilinear form on the space
$\Hd {{\alpha/2}} \II$, and thus our variational formulation of problem \eqref{strongp} in the
Riemann-Liouville case (with $q=0$) is to find $u\in U:=\Hd  {{\alpha/2}} \II$ satisfying
\begin{equation*}
A(u,v) = (f,v), \qquad \hbox{ for all }v\in U.
\end{equation*}

We next consider the Caputo case. Following the definition of the solution in \eqref{strongc}
in Section \ref{sec:strongsol}, we choose $\beta\geq0$ so that $\alpha+\beta\in (3/2,2)$ and $f\in \Hd \beta
\II$. By Theorem~\ref{l:i0reg}, the function $g={_0I_x^\alpha} f$ lies in $\Hdi 0 {\alpha+\beta}$ and hence
$g^\prime(0)=0$. Differentiating both sides of \eqref{strongc} and setting $x=0$ yields the identity
$u'(0)=({_0I_x^\alpha f})(1)$, and thus, the solution representation \eqref{strongc} can be rewritten as
\begin{equation*}
u= -{_0I_x^\alpha} f + x  u^\prime(0).
\end{equation*}
Meanwhile, for $v\in \Cd1^{{\alpha/2}} \II$, \eqref{rlvar} implies
\begin{equation*}
-(\DDR0\alpha g,v) = \big(({_0I^{2-\alpha}_x} u)^{\prime\prime},v\big)
=\big(({_0I^{2-\alpha}_x} g)^{\prime},v^\prime\big),
\end{equation*}
where we have again used the relation $g^\prime(0)=0$ in the last step. Analogous to the derivation
of \eqref{rl-first} and the identities $\DDC0\alpha g={\DDR0\alpha g}$ and $\DDC0\alpha x=0 $, we then have
\begin{equation}\label{cv1}
-(\DDC0\alpha u ,v) = (\DDR0\alpha g,v)
=-A(g,v) = A(u,v) - u^\prime(0) A(x,v).
\end{equation}
The term involving $u^{\prime}(0)$ cannot appear in the variational formulation in $\Hd {\alpha/2}\II$.
To circumvent this issue, we reverse the preceding steps and arrive at
\begin{equation*}
A(x,v) =  \big( {_0I^{2-\alpha}_x} 1,v^\prime\big)
= (\Gamma(3-\alpha))^{-1}  (x^{2-\alpha},v^\prime )= -(\Gamma(2-\alpha))^{-1}  (x^{1-\alpha},v ).
\end{equation*}
Hence, in order to get rid of the troublesome term $- u^\prime(0) A(x,v)$ in \eqref{cv1}, we
require our test functions to satisfy the integral condition $(x^{1-\alpha},v)=0$. Thus the
variational formulation of \eqref{strongp} in the Caputo case (with $q=0$) is to find  $u\in U$ satisfying
$$A(u,v) = (f,v), \qquad \hbox{ for all } v\in V,$$
with the test space
\begin{equation}\label{V-space}
   V=\left\{ \phi\in \Hdi 1{{\alpha/2}} \ :\ (x^{1-\alpha},\phi)=0\right\}.
\end{equation}
We shall show that this is a well-posed problem  with a unique solution
(for any $f\in L^2\II$).

In the rest of this section, we discuss the stability of the variational formulations.
Throughout, we denote by $U^*$ (respectively $V^*$) the set of bounded linear functionals
on $U$ (respectively $V$), and slightly abuse $\langle\cdot,\cdot\rangle$ for duality pairing
between $U$ and $U^*$ (or between $V$ and $V^*$). Further, we will denote by $\|\cdot\|_U$
the norm on the space $U$ etc.

\begin{remark}  We have seen that when $f$ is in $\Hdi 0 \beta$, with $\al + \be \in({3/2},2)$,
then the solution $u$ constructed by \eqref{strongc} satisfies the variational equation and
hence coincides with the unique solution to the variational equation. This may not be the case
when $f$ is only in $L^2\II$.  Indeed, for $\alpha$ in $(1,{3/2})$, the function $f=x^{1-\alpha}$
is in $L^2\II$. However, the variational solution (with $q=0$) in this case is $u=0$ and clearly does not
satisfy the strong-form differential equation $\DDC0\alpha u = f$.
\end{remark}

\subsection{Variational stability in Riemann-Liouville case}
We now establish the stability of the variational formulations.
The following lemma implies variational stability in the Riemann-Liouville case with $q=0$.
The result is well known \cite{ErvinRoop:2006}, and the proof is provided only for completeness.

\begin{lemma} \label{coercive}  Let $\alpha$ be in $(1,2)$.
Then there is a positive constant $c=c(\alpha)$ satisfying
\begin{equation}
c\| u\|_{\Hd {{\alpha/2}}\II} ^2 \le -(\DDR0{{\alpha/2}}u, \,
\DDR1{{\alpha/2}}u)= A(u,u), \qquad \hbox{ for all } u\in
\Hd  {{\alpha/2}} \II.
\label{coer1}
\end{equation}
\end{lemma}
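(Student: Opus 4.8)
The plan is to reduce $A(u,u)$ to a one-variable Fourier integral over $\RR$ and read off the sign from the symbol of the composed fractional derivatives. Since $\alpha/2\in(1/2,1)$, Theorem~\ref{l:extendR} guarantees $\DDR0{\alpha/2}u,\DDR1{\alpha/2}u\in L^2\II$ for $u\in\Hd{\alpha/2}\II$, so $A(u,u)$ is well defined. Writing $\tu$ for the extension of $u$ by zero, recall from \eqref{ddext} (and its right-sided analogue) that on $\II$ one has $\DDR0{\alpha/2}u=(\DDRI0{\alpha/2}\tu)|_\II$ and $\DDR1{\alpha/2}u=(\DDRI1{\alpha/2}\tu)|_\II$. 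The first key observation is a support argument: because $\tu$ is supported in $[0,1]$, the left-sided whole-line derivative $\DDRI0{\alpha/2}\tu$ vanishes for $x<0$ (its defining integral only sees $\tu(t)=0$), while $\DDRI1{\alpha/2}\tu$ vanishes for $x>1$. Hence in the product $(\DDRI0{\alpha/2}\tu)\,\overline{(\DDRI1{\alpha/2}\tu)}$ at least one factor vanishes outside $\II$, and I may replace the $L^2\II$ inner product by the inner product over all of $\RR$,
\[
A(u,u)=-\big(\DDRI0{\alpha/2}\tu,\ \DDRI1{\alpha/2}\tu\big)_{L^2(\RR)}.
\]

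Next I would pass to Fourier variables. By \eqref{fident} and its right-sided analogue, $\cF(\DDRI0{\alpha/2}\tu)=(-i\omega)^{\alpha/2}\cF(\tu)$ and $\cF(\DDRI1{\alpha/2}\tu)=(i\omega)^{\alpha/2}\cF(\tu)$, so Plancherel's theorem gives
\[
A(u,u)=-\int_\RR (-i\omega)^{\alpha/2}\,\overline{(i\omega)^{\alpha/2}}\,|\cF(\tu)(\omega)|^2\,d\omega.
\]
The crux is the symbol. Using the principal branch, for real $\omega\neq0$ one checks $\overline{(i\omega)^{\alpha/2}}=(-i\omega)^{\alpha/2}$, so the symbol collapses to $(-i\omega)^\alpha$, whose real part is $|\omega|^\alpha\cos(\tfrac{\pi\alpha}{2})$ and whose imaginary part is odd in $\omega$. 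For real-valued $u$ (the relevant setting; otherwise one takes $\mathrm{Re}\,A$) the factor $|\cF(\tu)(\omega)|^2$ is even, the odd part integrates to zero, and
\[
A(u,u)=-\cos(\tfrac{\pi\alpha}{2})\int_\RR|\omega|^\alpha|\cF(\tu)(\omega)|^2\,d\omega.
\]

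The decisive point is the sign: for $\alpha\in(1,2)$ we have $\tfrac{\pi\alpha}{2}\in(\tfrac\pi2,\pi)$, hence $\cos(\tfrac{\pi\alpha}{2})<0$ and $-\cos(\tfrac{\pi\alpha}{2})=|\cos(\tfrac{\pi\alpha}{2})|>0$; the remaining integral is exactly the Gagliardo seminorm $|\tu|_{H^{\alpha/2}(\RR)}^2$, so $A(u,u)\ge0$ with the correct orientation. Finally I must upgrade this seminorm to the full norm $\|\tu\|_{H^{\alpha/2}(\RR)}^2=\int_\RR(1+\omega^2)^{\alpha/2}|\cF(\tu)|^2\,d\omega$. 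This is where the compact support of $\tu$ enters: a fractional Poincar\'e inequality controls the low-frequency contribution $\int_{|\omega|<1}|\cF(\tu)|^2\,d\omega$ by the seminorm, yielding $\int_\RR|\omega|^\alpha|\cF(\tu)|^2\,d\omega\ge c\,\|\tu\|_{H^{\alpha/2}(\RR)}^2=c\,\|u\|_{\Hd{\alpha/2}\II}^2$, which combined with the sign analysis gives \eqref{coer1}.

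I expect the main obstacles to be the two genuinely nonroutine steps: the branch/symbol computation (correctly identifying $(-i\omega)^{\alpha/2}\overline{(i\omega)^{\alpha/2}}=(-i\omega)^\alpha$ and tracking that $\cos(\tfrac{\pi\alpha}{2})<0$ is precisely what makes $-A$ positive), and the support argument justifying the replacement of the $L^2\II$ inner product by the $L^2(\RR)$ one. All the identities above hold first for smooth compactly supported $u$ and then extend to $\Hd{\alpha/2}\II$ by density, using the continuity of the extended operators established in Theorem~\ref{l:extendR}; the seminorm-to-norm equivalence on the zero-extension space over a bounded interval is standard and I would simply invoke it.
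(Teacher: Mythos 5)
Your proposal is correct, and its core coincides with the paper's: both reduce $A(u,u)$ to the whole-line Fourier integral $\int_\RR(\pm i\omega)^{\alpha}|\cF(\tu)|^2\,d\omega$ and extract the positive constant $-\cos(\tfrac{\pi\alpha}{2})=\cos\big((1-\tfrac{\alpha}{2})\pi\big)$ from the real (even) part of the symbol. Two points of comparison. First, your support argument justifying the passage from the $L^2\II$ pairing to the $L^2(\RR)$ pairing (left-sided derivative of $\tu$ vanishes for $x<0$, right-sided for $x>1$) is a detail the paper leaves implicit; it is correct and worth making explicit. Second, and this is the genuine divergence: to upgrade the seminorm $\int_\RR|\omega|^\alpha|\cF(\tu)|^2\,d\omega$ to the full $\Hd{\alpha/2}\II$ norm, the paper runs a compactness/contradiction argument (a normalized sequence violating \eqref{coer1}, compact embedding into $L^2\II$, Cauchy in the weighted Fourier norm, limit zero, contradiction), whereas you invoke a fractional Poincar\'e inequality for the low frequencies. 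Your route is more elementary and yields an explicit constant, but you should not leave it as a bare invocation: for $\tu$ supported in $[0,1]$ one has $|\cF(\tu)(\omega)|\le c\|\tu\|_{L^2(\RR)}$, hence $\int_{|\omega|<\epsilon}|\cF(\tu)|^2\,d\omega\le c\epsilon\|\tu\|^2_{L^2(\RR)}$, and combining this with Plancherel and $\int_{|\omega|\ge\epsilon}|\cF(\tu)|^2\,d\omega\le\epsilon^{-\alpha}\int_\RR|\omega|^\alpha|\cF(\tu)|^2\,d\omega$ for small enough $\epsilon$ gives the absorption; one line, but it is the step that makes the constant $c(\alpha)$ exist, so it belongs in the proof. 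With that sketch included, your argument is complete and arguably cleaner than the paper's.
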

\begin{proof}
Like in the proof of Theorem~\ref{l:extendR}, we find for
$u\in C_0^\infty \II$,
\begin{equation}
\begin{aligned}
  -(\DDR0{{\alpha/2}}u,\,\DDR1{{\alpha/2}}u)&= -\int_{-\infty}^\infty (i\omega)^{\alpha}|\cF(\tilde u)|^2\, d\omega
  =2 \cos((1-\tfrac{\alpha}{2})\pi)  \int_0^\infty \omega^{\alpha}|\cF(\tilde u)|^2\, d\omega\\
  &= \cos((1-\tfrac{\alpha}{2})\pi) \int_{-\infty}^\infty |\omega|^{\alpha}
|\cF(\tilde u)|^2\, d\omega.
%  = \red{ \cos((1-\tfrac{\alpha}{2})\pi) |u|_{H^{\al/2}(0,1)} }.
\end{aligned}
\label{semb}
\end{equation}
Now, suppose that there does not exist a constant satisfying
\eqref{coer1}. Then, by the compact embedding of $\Hd {{\alpha/2}} \II$
into $L^2\II$, there is a sequence  $\{u_j\}
\subset \Hd {{\alpha/2}} \II$ with $\|u_j\|_{ \Hd {{\alpha/2}} \II} =1$, $u_j$
convergent to some $u\in L^2\II$ and
satisfying
\begin{equation}\label{ujb}
\| u_j\|_{\Hd {{\alpha/2}}\II}^2 > -j  (\DDR0{{\alpha/2}}
u_j,\, \DDR1{{\alpha/2}}u_j).
\end{equation}
It follows  from \eqref{semb}, \eqref{ujb} and the convergence of the sequence $\{u_j\}$ in
$L^2\II$ that $\{\cF(\tu_j)\}$ is a Cauchy sequence and hence converges to $\cF(\tu)$ in the
norm $( \int_{-\infty}^\infty(1+\omega^2)^{\alpha} |\cdot|^2\,d\omega)^{1/2}$. This implies
that $u_j$ converges to $u$ in $\Hd {\alpha/2} \II$. Theorem~\ref{l:extendR} and \eqref{ujb}
implies that $-(\DDR0{{\alpha/2}} u,\, \DDR1{{\alpha/2}}u)=0$, from which it follows that
$\cF(\tu)=\tu=0$. This contradicts the assumption $\|u_j\|_{\Hd{{\alpha/2}}\II}=1$ and
completes the proof.
\end{proof}

We now return to the problem with $q\neq 0$ and define $$a(u,v) = A(u,v) +(qu,v).$$
To this end, we make the following uniqueness assumption on the bilinear form.
\begin{assumption} \label{ass:riem}
Let the bilinear form $a(u,v)$ with $u,v\in U$ satisfy
\begin{itemize}
 \item[{$\mathrm{(a)}$}]  The problem of finding $u \in U$ such that $a(u,v)=0$ for all $v \in U$
           has only the trivial solution $u\equiv 0$.
 \item[{$(\mathrm{a}^\ast)$}] The problem of finding $v \in U$ such that $a(u,v)=0$ for all $u \in U$
    has only the trivial solution $v\equiv 0$.
\end{itemize}
\end{assumption}

We then have the following existence result.

\begin{theorem}[Riemann-Liouville derivative] \label{t:sourceRL}
Let Assumption \ref{ass:riem} hold and $q\in L^\infty\II$. Then for any
given $F\in U^*$, there exists a unique solution $u\in U$ solving
\begin{equation}\label{varrl}
a(u,v)= \langle F,v \rangle, \qquad \hbox{ for all } v\in U.
\end{equation}
\end{theorem}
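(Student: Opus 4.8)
The plan is to establish well-posedness of the variational problem \eqref{varrl} by combining the coercivity of the principal part $A(u,v)$ from Lemma \ref{coercive} with a compactness argument to handle the lower-order perturbation $(qu,v)$, and then to invoke the Fredholm alternative together with Assumption \ref{ass:riem} to promote uniqueness into existence. The setting is the Hilbert space $U = \Hd{{\alpha/2}}\II$.

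First I would verify that $a(u,v)$ is a bounded bilinear form on $U \times U$. The boundedness of $A(u,v)$ follows from Theorem \ref{l:extendR}, which gives $\|\DDR0{\alpha/2}u\|_{L^2\II} \le c\|u\|_U$ and likewise for the right-sided derivative, so that $|A(u,v)| \le c\|u\|_U\|v\|_U$ by Cauchy--Schwarz. For the zeroth-order term, since $q \in L^\infty\II$ and $U$ embeds continuously into $L^2\II$, we have $|(qu,v)| \le \|q\|_{L^\infty\II}\|u\|_{L^2\II}\|v\|_{L^2\II} \le c\|u\|_U\|v\|_U$. Next, Lemma \ref{coercive} furnishes a constant $c_0 > 0$ with $A(u,u) \ge c_0\|u\|_U^2$ for all $u \in U$, which makes the operator associated to $A$ an isomorphism of $U$ onto $U^*$ by the Lax--Milgram theorem.

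The main structural step is to write \eqref{varrl} in operator form. Let $\mathcal{A}\colon U \to U^*$ denote the isomorphism defined by $\langle \mathcal{A}u, v\rangle = A(u,v)$, and let $\mathcal{K}\colon U \to U^*$ be defined by $\langle \mathcal{K}u, v\rangle = (qu,v)$. The equation \eqref{varrl} reads $(\mathcal{A} + \mathcal{K})u = F$, equivalently $(I + \mathcal{A}^{-1}\mathcal{K})u = \mathcal{A}^{-1}F$ in $U$. The key observation is that $\mathcal{A}^{-1}\mathcal{K}\colon U \to U$ is a compact operator: the factor $\mathcal{K}$ factors through the compact embedding $U \hookrightarrow L^2\II$, since $\langle\mathcal{K}u,v\rangle$ depends on $u$ only through its $L^2\II$ value, and composing a compact map with the bounded operator $\mathcal{A}^{-1}$ preserves compactness. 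This is the step I expect to require the most care: one must argue that the embedding $\Hd{\alpha/2}\II \hookrightarrow L^2\II$ is compact (for $\alpha/2 > 0$, standard Rellich--Kondrachov theory applies) and verify the factorization cleanly.

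With compactness in hand, the Fredholm alternative applies to $I + \mathcal{A}^{-1}\mathcal{K}$: this operator is surjective if and only if it is injective, and its injectivity is precisely Assumption \ref{ass:riem}(a), namely that $a(u,v) = 0$ for all $v \in U$ forces $u \equiv 0$. Hence $I + \mathcal{A}^{-1}\mathcal{K}$ is a bijection of $U$, giving a unique $u \in U$ solving \eqref{varrl} for every $F \in U^*$. I would remark that Assumption \ref{ass:riem}$(\mathrm{a}^\ast)$ is the corresponding statement for the adjoint form and, by the Fredholm alternative, is equivalent to (a); either suffices, and stating both merely records that the adjoint problem is simultaneously well-posed, which will be useful later for the shift theorem and error analysis.
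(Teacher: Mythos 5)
Your proof is correct, but it follows a genuinely different route from the paper's. The paper applies the Petree--Tartar lemma to the pair $S,T:U\to U^*$ given by $\langle Su,v\rangle=a(u,v)$ and $\langle Tu,v\rangle=-(qu,v)$: Assumption \ref{ass:riem}(a) gives injectivity of $S$, Lemma \ref{coercive} gives the bound $\|u\|_U^2\le c(\|Su\|_{U^*}+\|Tu\|_{U^*})\|u\|_U$, and compactness of $T$ then yields the inf-sup inequality \eqref{inf-suprl}; surjectivity is obtained separately from Assumption \ref{ass:riem}$(\mathrm{a}^\ast)$. You instead invert the coercive principal part by Lax--Milgram and recast \eqref{varrl} as $(I+\mathcal{A}^{-1}\mathcal{K})u=\mathcal{A}^{-1}F$ with $\mathcal{A}^{-1}\mathcal{K}$ compact, so the Fredholm alternative converts injectivity --- which is exactly Assumption \ref{ass:riem}(a) --- directly into bijectivity. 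The two arguments rest on the same three ingredients (coercivity of $A$, compactness of the embedding $\Hd{\alpha/2}\II\hookrightarrow L^2\II$, and a uniqueness hypothesis), but they trade off differently: your route shows that Assumption \ref{ass:riem}$(\mathrm{a}^\ast)$ is not an independent hypothesis at all, since for an index-zero Fredholm operator injectivity of the adjoint is equivalent to injectivity of the operator itself --- a genuine economy in the assumptions; the paper's route, on the other hand, produces the quantitative inf-sup constant $\gamma$ in \eqref{inf-suprl} explicitly, which is then reused verbatim in the discrete stability argument of Lemma \ref{lem:disinfsup:riem} (your approach recovers the same bound only implicitly, via the open mapping theorem). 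One small point of care you rightly flag: the compactness of $\mathcal{K}$ requires the compact embedding of $\Hd{\alpha/2}\II$ into $L^2\II$, which holds since $\alpha/2>0$; this is the same compactness the paper invokes for $T$.
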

\begin{proof}
The proof is an application of the Petree-Tartar lemma \cite[pp. 469, Lemma A.38]{ern-guermond}.
To this end, we define respectively $S:U\rightarrow U^*$ and $T:U\rightarrow U^*$ by
\begin{equation*}
  \langle Su,v \rangle =a(u,v),\qquad \hbox{and} \qquad
  \langle Tu,v \rangle =-(qu,v),\qquad \hbox{ for all } v\in U.
\end{equation*}
Assumption \ref{ass:riem}(a) implies that $S$ is injective.
Further, Lemma~\ref{coercive} implies
$$\|u\|_U^2\le c A(u,u)= c (\langle Su ,u \rangle + \langle Tu,u\rangle )
\le c(\|Su \|_{U^*}+\| Tu \|_{U^*})\|u\|_U, \qquad \hbox{ for all } u\in U.$$
Meanwhile, the compactness of $T$
follows from the fact $q\in L^\infty\II$ and the compactness of $U$ in $L^2\II$.
Now the Petree-Tartar lemma directly implies that there exists a constant $\gamma>0$
satisfying
\begin{equation}\label{inf-suprl}
\gamma  \|u\|_U \le \sup_{v\in U } \frac {a(u,v)} {\|v\|_U}.
\end{equation}
This together with Assumption \ref{ass:riem}$(\hbox{a}^\ast)$ shows that
$S:U\rightarrow U^* $ is bijective, i.e., there is a unique solution
of \eqref{varrl} (see, e.g. \cite[Corollary A.45]{ern-guermond}).
\end{proof}

We now show that the variational solution $u$ in Theorem \ref{t:sourceRL}, in fact, is a strong
solution when $\langle F,v\rangle = (f,v) $ for some $f\in L^2\II$.  We consider the problem
\begin{equation}\label{rl-qneq0}
-\DDR0{\alpha} w = f-qu.
\end{equation}
A strong solution is given by \eqref{strongrl} with a right hand side $\widetilde{f}=f-qu$.
It satisfies the variational equation and hence coincides with the unique variational
solution. We record this result below.
\begin{theorem}\label{thm:regrl}
Let Assumption \ref{ass:riem} hold, and $q\in L^\infty\II$. Then with a right hand
side $\langle F,v\rangle=(f,v)$ for some $f\in L^2\II$, the solution $u$ to \eqref{varrl} is
in $\Hdi0{\alpha-1+\beta}\cap \Hd{{\alpha/2}}\II$ for any $\beta\in[0,1/2)$, and it satisfies
\begin{equation*}
  \|u \|_{\Hdi 0 {\al -1 +\beta}} \le c \|f\|_{L^2\II}.
\end{equation*}
\end{theorem}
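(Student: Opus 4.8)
The plan is to exploit the strong solution representation already identified in the paragraph preceding the statement. By the discussion surrounding \eqref{rl-qneq0}, the variational solution $u$ coincides with the strong solution constructed from \eqref{strongrl} applied to the modified right-hand side $\widetilde f := f - qu$, that is,
\begin{equation*}
u = -{_0I_x^\alpha}\widetilde f + ({_0I_x^\alpha}\widetilde f)(1)\, x^{\alpha-1}.
\end{equation*}
The first step is therefore to record that $\widetilde f \in L^2\II$ with $\|\widetilde f\|_{L^2\II} \le c\|f\|_{L^2\II}$. Since $q \in L^\infty\II$, this reduces to an a priori bound on $\|u\|_{L^2\II}$.

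For that a priori bound I would invoke the inf-sup inequality \eqref{inf-suprl} established in the proof of Theorem~\ref{t:sourceRL}. With $\langle F,v\rangle = (f,v)$ and the continuous embedding $L^2\II \hookrightarrow U^*$ (dual to $U = \Hd{{\alpha/2}}\II \hookrightarrow L^2\II$), one gets $\gamma\|u\|_U \le \|f\|_{U^*} \le c\|f\|_{L^2\II}$, whence $\|u\|_{L^2\II} \le c\|u\|_U \le c\|f\|_{L^2\II}$. Boundedness of $q$ then yields $\|\widetilde f\|_{L^2\II} \le \|f\|_{L^2\II} + \|q\|_{L^\infty\II}\|u\|_{L^2\II} \le c\|f\|_{L^2\II}$.

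With $\widetilde f$ controlled in $L^2\II$, I would treat the two terms of the representation separately. The integral term ${_0I_x^\alpha}\widetilde f$ lies in $\Hdi0\alpha$ by Theorem~\ref{l:i0reg} (taking $s=0$), with $\|{_0I_x^\alpha}\widetilde f\|_{\Hdi0\alpha} \le c\|\widetilde f\|_{L^2\II} \le c\|f\|_{L^2\II}$; since $\alpha > 1/2$, point evaluation is a bounded functional on $\Hdi0\alpha$, so the scalar coefficient obeys $|({_0I_x^\alpha}\widetilde f)(1)| \le c\|f\|_{L^2\II}$. For the second term, Corollary~\ref{phi-0} gives $x^{\alpha-1} \in \Hdi0{\alpha-1+\beta}$ for every $\beta \in [0,1/2)$, because $\alpha-1+\beta < (\alpha-1)+1/2$. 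Using the embedding $\Hdi0\alpha \hookrightarrow \Hdi0{\alpha-1+\beta}$ for the integral part, both pieces lie in $\Hdi0{\alpha-1+\beta}$, and the triangle inequality delivers the stated bound $\|u\|_{\Hdi0{\alpha-1+\beta}} \le c\|f\|_{L^2\II}$. Membership in $\Hd{{\alpha/2}}\II$ is automatic, since $u$ is by construction the variational solution in $U = \Hd{{\alpha/2}}\II$.

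The main conceptual point, rather than a genuine obstacle, is to notice that the limiting regularity is dictated entirely by the boundary term $x^{\alpha-1}$: the integral part ${_0I_x^\alpha}\widetilde f$ is a full order smoother (it lives in $\Hdi0\alpha$), so it is the factor $x^{\alpha-1}$ that caps the Sobolev regularity just below $\alpha-1/2$, which is precisely why $\beta$ is restricted to $[0,1/2)$. The only point requiring a little care is the apparently circular dependence $\widetilde f = f - qu$; this is harmless because the a priori bound $\|u\|_{L^2\II} \le c\|f\|_{L^2\II}$ closes the loop before the smoothing estimates are applied, so no genuine bootstrap or fixed-point argument is needed.
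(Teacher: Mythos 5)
Your proof is correct and follows essentially the same route as the paper: rewrite the equation with the modified right-hand side $\widetilde f = f - qu \in L^2\II$, identify the variational solution with the strong representation \eqref{strongrl}, and then apply Theorem~\ref{l:i0reg} and Corollary~\ref{phi-0} to the two terms. The only difference is that you make explicit the a priori bound $\|u\|_U \le c\|f\|_{L^2\II}$ via the inf-sup condition \eqref{inf-suprl}, which the paper leaves implicit but which is indeed needed for the quantitative estimate.
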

\begin{proof} It follows from Theorem \ref{t:sourceRL} and Assumption \ref{ass:riem} that there
exists a solution $u\in \Hd{{\alpha/2}}\II$. Next we rewrite into \eqref{rl-qneq0} with a right
hand side $\widetilde{f}=f -qu$. In view of the fact that $q\in L^\infty\II$ and $u\in
\Hd{{\alpha/2}}\II$, there holds $qu\in L^2\II$, and hence $\widetilde{f}\in L^2\II$. Now the
desired assertion follows directly from the representation \eqref{strongrl}, Theorem \ref{l:i0reg}
and Corollary \ref{phi-0}.
\end{proof}

\begin{remark}\label{rmk:regrl}
In general, the best regularity of the solution to \eqref{strongp} with a Riemann-Liouville
fractional derivative is $\Hdi0{\alpha-1+\beta}$ for any $\beta\in[0,1/2)$,
due to the presence of the singular term $x^{\alpha-1}$. The only possibility of an improved regularity
is the case $({_0I_x^\alpha f})(1)=0$ (for $q=0$).
\end{remark}

We note that a similar result holds for the adjoint problem:
given $F\in U^*$, find $w\in U$ such that
\begin{equation} \label{adjvarrl}
a(v,w)= \langle v,F \rangle, \qquad \hbox{ for all } v\in U.
\end{equation}
Then there exists a unique solution $w\in U$ to the adjoint problem. Indeed, Assumption
\ref{ass:riem} and \eqref{inf-suprl} immediately implies that the inf-sup condition for
the adjoint problem holds with the same constant. Now, by proceeding as in \eqref{strongrl},
for $q=0$ and a right hand side $\langle F,v\rangle\equiv (f,v)$ with $f\in L^2\II$, we have
\begin{equation*}
w= -{_xI_1^\alpha} f +  ({_xI_1^\alpha} f)(0)(1-x)^{\alpha -1}.
\end{equation*}
This implies a similar regularity pickup, i.e., $w\in\Hdi 1 {\alpha-1+\beta}$, for \eqref{adjvarrl},
provided that $q\in L^\infty\II$. Further, we can repeat the arguments in the proof of Theorem
\ref{thm:regrl} for a general $q$ to deduce the regularity of the adjoint solution. We record the
observation in a remark.
\begin{remark}\label{rmk:regadjrl}
Let Assumption \ref{ass:riem} hold, and $q\in L^\infty\II$.
Then with a right hand side $\langle F,v\rangle=(f,v)$ for some $f\in L^2\II$,
the solution $w$ to \eqref{adjvarrl} is in $\Hdi1{\alpha-1+\beta}\cap \Hd{{\alpha/2}}\II$ for any
$\beta\in[0,1/2)$, and it satisfies
\begin{equation*}
  \|w\|_{\Hdi1 {\al -1 +\beta}} \le c \|f\|_{L^2\II}.
\end{equation*}
\end{remark}

\subsection{Variational stability in Caputo case}
We next consider the case of the Caputo derivative. We set $\phi_0=(1-x)^{\alpha-1}$.
By Corollary \ref{phi-0}, $\phi_0$ is in the space $\Hdi1 {\frac{\alpha}{2}}$.
Further, we observe that
\begin{equation}\label{ortho}
A(u,\phi_0)=0 \qquad \hbox{ for all } u\in U.
\end{equation}
Indeed, for $u\in \Hd 1\II$, by the change of integration order formula \eqref{eqn:intpart}, there holds
\begin{equation*}
\begin{aligned}
   A(u,(1-x)^{\alpha-1})&= -(I_0^{1-{\alpha/2}} u^\prime, \, \DDR1{{\alpha/2}}(1-x)^{\alpha-1} ) \\&= c_\alpha (
     u^\prime,I_1^{1-{\alpha/2}} (1-x)^{{\alpha/2}-1}) =c_\alpha ( u^\prime,1)=0.
 \end{aligned}
\end{equation*}

Now for a given $u\in U$, we set $v=u-\gamma_u \phi_0$, where the linear functional $\gamma_u$ is defined by
\begin{equation}\label{gammau}
\gamma_u = \frac {(x^{1-\alpha},u)} {(x^{1-\alpha},\phi_0)}.
\end{equation}
Clearly, by the norm equivalence $\|u\|_{H^{{\alpha/2}}\II} $ and
$\|u\|_{\Hdi 1 {{\alpha/2}}}$ for $\alpha\in(1,2)$, and the continuous
embedding from $H^{{\alpha/2}}\II$ into $L^\infty\II$, we have
\begin{equation*}
  |\gamma_u| \le c |(x^{1-\alpha},u)|\le c\|u\|_{L^\infty\II} \|x^{1-\alpha}\|_{L^1\II}\le c\|u\|_{\Hd {\alpha/2}\II}.
\end{equation*}
Consequently, the function $v$ satisfies $v\in H^{\alpha/2}\II$ and $v(1)=0$,
i.e., it is in the space $V$. By Lemma~\ref{coercive},
\begin{equation*}
A(u,v) = A(u,u) \ge c\|u\|_{\Hd {{\alpha/2}} \II}^2.
\end{equation*}
Finally, there also holds
\begin{equation*}
  \|v\|_{\Hdi 1 {{\alpha/2}}} \le \|u\|_{\Hd {{\alpha/2}}\II}+ c|\gamma_u|
  \le c\|u\|_{\Hd {{\alpha/2}}\II},
\end{equation*}
and thus the inf-sup condition follows immediately
\begin{equation}\label{isUV}
 \|u\|_{\Hd {{\alpha/2}}\II}\le c \sup_{v\in V} \frac {A(u,v)}{\|v\|_{\Hdi 1 {{\alpha/2}}}}, \hbox{ for all } u\in U.
\end{equation}

Now given any $v\in V$, we set $u=v-v(0)\phi_0$.  Obviously, $u$ is nonzero
whenever $v\neq 0$ and applying Lemma~\ref{coercive} we get
\begin{equation*}
   A(u,v)=A(u,u)>0.
\end{equation*}
This implies that if $A(u,v)=0$ for all $u\in U$, then $v=0$.  This and \eqref{isUV}
imply that the corresponding variational problem is stable, i.e., given $F\in V^*$,
there exists a unique $u\in U$ satisfying
$$A(u,v) = \langle F,v\rangle, \qquad \hbox{ for all } v\in V.$$

We next consider the case $q\ne0$ and like before we assume the uniqueness of the bilinear form.
\begin{assumption}\label{ass:caputo}
Let the bilinear form $a(u,v)$ with $u\in U,v\in V$ satisfy
\begin{itemize}
 \item[{$\mathrm{(b)}$}]The problem of finding $u \in U$ such that $a(u,v)=0$ for all $v \in V$
           has only the trivial solution $u\equiv 0$.
 \item[{$(\mathrm{b}^\ast)$}] The problem of finding $v \in V$ such that $a(u,v)=0$ for all $u \in U$
    has only the trivial solution $v\equiv 0$.
\end{itemize}
\end{assumption}

We then have the following existence result.

\begin{theorem}[Caputo derivative] \label{t:sourceC}
Let Assumption \ref{ass:caputo} hold and $q\in L^\infty\II$. Then for any given $F\in V^*$,
there exists a unique solution $u\in U$ to
\begin{equation}\label{varc}
a(u,v)= \langle F,v \rangle, \qquad \hbox{ for all } v\in V.
\end{equation}
\end{theorem}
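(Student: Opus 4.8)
The plan is to follow the same Petree--Tartar strategy used in the proof of Theorem~\ref{t:sourceRL}, but adapted to the Petrov--Galerkin setting in which the trial space $U$ and test space $V$ differ. The essential observation is that the coercivity estimate of Lemma~\ref{coercive}, which drove the Riemann--Liouville argument by allowing one to test with $v=u$, is here unavailable because $u\notin V$ in general; its role is instead played by the $q=0$ inf--sup condition \eqref{isUV} already established above. With that substitution, the remainder of the argument transfers almost verbatim.

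First I would define the operators $S,T\colon U\to V^*$ by
\[
\langle Su,v\rangle = a(u,v), \qquad \langle Tu,v\rangle = -(qu,v), \qquad v\in V,
\]
so that $A(u,v)=\langle Su+Tu,\,v\rangle$. Assumption~\ref{ass:caputo}(b) says precisely that $S$ is injective. Combining \eqref{isUV} with this splitting then gives
\[
\|u\|_U \le c\sup_{v\in V}\frac{A(u,v)}{\|v\|_V}
= c\sup_{v\in V}\frac{\langle Su+Tu,v\rangle}{\|v\|_V}
\le c\bigl(\|Su\|_{V^*}+\|Tu\|_{V^*}\bigr),
\]
for all $u\in U$, which is exactly the hypothesis of the Petree--Tartar lemma, provided $T$ is shown to be compact.

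Next I would verify the compactness of $T$. Since $q\in L^\infty\II$ and $\|v\|_{L^2\II}\le\|v\|_V$, the estimate $\|Tu\|_{V^*}\le\|qu\|_{L^2\II}\le\|q\|_{L^\infty\II}\|u\|_{L^2\II}$ shows that $T$ factors through the embedding of $U=\Hd{\al/2}\II$ into $L^2\II$, which is compact; hence $T$ is compact. The Petree--Tartar lemma \cite[Lemma A.38]{ern-guermond} then produces a constant $\gamma>0$ with the inf--sup bound $\gamma\|u\|_U\le\sup_{v\in V}a(u,v)/\|v\|_V$ for the full form $a$.

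Finally, this inf--sup bound together with Assumption~\ref{ass:caputo}(b$^\ast$)---which guarantees that no nonzero $v\in V$ annihilates $a(\cdot,v)$---supplies both nondegeneracy conditions of the Banach--Ne\v{c}as--Babu\v{s}ka framework, so I would conclude that $S\colon U\to V^*$ is bijective, i.e. that \eqref{varc} has a unique solution, by \cite[Corollary A.45]{ern-guermond}. I do not anticipate a genuine obstacle here: the only point requiring care, and the sole substantive departure from the Riemann--Liouville proof, is bookkeeping of the Petrov--Galerkin structure---the pairings now live in $V^*$ and the compact embedding must be composed with $V\hookrightarrow L^2\II$. The conceptually hard step, namely replacing coercivity by a working inf--sup estimate, was already carried out in establishing \eqref{isUV} via the construction $v=u-\gamma_u\phi_0$, so the present theorem is a routine application of that machinery.
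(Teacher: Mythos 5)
Your proposal is correct and follows essentially the same route as the paper: the same operators $S,T\colon U\to V^*$, injectivity of $S$ from Assumption~\ref{ass:caputo}(b), the inf--sup bound \eqref{isUV} substituting for coercivity in the Petree--Tartar hypothesis, compactness of $T$ via the embedding $U\hookrightarrow L^2\II$, and bijectivity from Assumption~\ref{ass:caputo}(b$^\ast$). The paper simply compresses the final steps by referring back to the proof of Theorem~\ref{t:sourceRL}, which you have spelled out explicitly and accurately.
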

\begin{proof}  The proof is similar to that  of Theorem \ref{t:sourceRL}.
In this case, we define $S:U\rightarrow V^*$ and $T:U\rightarrow V^*$ by
$$\langle Su,v \rangle =a(u,v),\qquad \hbox{and} \qquad
\langle Tu,v \rangle =-(qu,v),\qquad \hbox{ for all } v\in V.$$
Assumption \ref{ass:caputo}(b) implies that $S$ is injective.
Applying \eqref{isUV} we get for any $u\in U$
\begin{equation*}
  \begin{aligned}
   \|u\|_U\le c\sup_{v\in V} \frac{ A(u,v)} {\|v\|_V}
  \le c\sup_{v\in V} \frac{ a(u,v)} {\|v\|_V}+c\sup_{v\in V}\frac{ -(qu,v)} {\|v\|_{V}}
  =c(\|Su \|_{V^*}+\| Tu \|_{V^*}).
  \end{aligned}
\end{equation*}
The rest of the proof is essentially identical with that of Theorem~\ref{t:sourceRL}.
\end{proof}

To relate the variational solution to the strong solution, we require additional
conditions on the potential term $q$ when $\alpha\in (1,3/2]$. In particular,
we first state an ``algebraic'' property of the space $\Hd s \II$, $0<s\leq1$, which is reminiscent
of \cite[Theorem 4.39, pp. 106]{AdamsFournier:2003}.
\begin{lemma}\label{lem:Hsalg}
Let $0<s\leq1, s\neq 1/2$. Then for any $u\in\Hd s\II\cap L^\infty\II$
and $v\in H^s\II\cap L^\infty\II$, the product $uv$ is in $\Hd s\II$.
\end{lemma}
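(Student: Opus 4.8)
The plan is to reduce everything to the elementary algebraic identity
\[
(uv)(x)-(uv)(y)=u(x)\big(v(x)-v(y)\big)+v(y)\big(u(x)-u(y)\big),
\]
and to treat the fractional range $0<s<1$ and the endpoint $s=1$ separately.

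For $0<s<1$ I would work with the Gagliardo seminorm
\[
|w|^2_{H^s(\Omega)}=\int_\Omega\int_\Omega\frac{|w(x)-w(y)|^2}{|x-y|^{1+2s}}\,dx\,dy
\]
on a domain $\Omega$ to be chosen. Squaring the identity above, using $(a+b)^2\le 2a^2+2b^2$, and bounding the pointwise factors $|u(x)|$ and $|v(y)|$ by $\|u\|_{L^\infty}$ and $\|v\|_{L^\infty}$ yields
\[
|uv|^2_{H^s(\Omega)}\le 2\|u\|^2_{L^\infty}\,|v|^2_{H^s(\Omega)}+2\|v\|^2_{L^\infty}\,|u|^2_{H^s(\Omega)}.
\]
Combined with the trivial bound $\|uv\|_{L^2(\Omega)}\le\|u\|_{L^\infty}\|v\|_{L^2(\Omega)}$, this shows that the product of two functions in $H^s(\Omega)\cap L^\infty(\Omega)$ is again in $H^s(\Omega)$, with a controlled norm.

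The main obstacle is to pass from membership in $H^s\II$ to membership in $\Hd s\II$, i.e.\ to show that the zero extension of $uv$ lies in $H^s(\RR)$; this is exactly where the hypothesis $s\neq 1/2$ enters. For $0<s<1/2$ there is nothing more to prove, since $\Hd s\II$ coincides with $H^s\II$ and the seminorm estimate on $\Omega=\II$ already gives $uv\in\Hd s\II$. For $1/2<s<1$ the two spaces differ, and the trick is to transfer the estimate to all of $\RR$. I would fix a bounded extension $V\in H^s(\RR)\cap L^\infty(\RR)$ of $v$ (for instance by even reflection across the endpoints followed by a smooth cutoff, which preserves both the $H^s$ and the $L^\infty$ bounds), while the zero extension $\tu$ of $u$ already lies in $H^s(\RR)\cap L^\infty(\RR)$ because $u\in\Hd s\II$. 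Since $\tu$ is supported in $[0,1]$, the product $\tu\,V$ agrees with $uv$ on $\II$ and vanishes off $[0,1]$, so it is precisely the zero extension of $uv$. Applying the product estimate above with $\Omega=\RR$ to the pair $\tu,V\in H^s(\RR)\cap L^\infty(\RR)$ then gives $\tu\,V\in H^s(\RR)$, and hence $uv\in\Hd s\II$. The borderline value $s=1/2$, where $\Hd {1/2}\II$ is the strictly smaller Lions--Magenes space, is precisely the case this two-regime argument does not cover, and is therefore excluded.

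Finally, for $s=1$ I would argue directly. Since $u,v\in H^1\II\cap L^\infty\II$ on a bounded interval, the classical product rule $(uv)'=u'v+uv'$ is valid and places $uv$ in $H^1\II$. Because $u\in\Hd 1\II=H^1_0\II$ vanishes at both endpoints, so does $uv$, whence $uv\in H^1_0\II=\Hd 1\II$, completing the proof.
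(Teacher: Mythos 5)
Your proposal is correct and follows essentially the same route as the paper: the same pointwise splitting of $u(x)v(x)-u(y)v(y)$, the same Gagliardo-seminorm estimate with the $L^\infty$ factors pulled out, and the same product-rule argument at $s=1$. The only substantive difference is that you justify the passage from $H^s\II$ to $\Hd s\II$ for $s\in(1/2,1)$ by extending $v$ to $\RR$ and multiplying against the zero extension of $u$, whereas the paper computes the (equivalent, for $s\neq 1/2$) intrinsic seminorm directly on $\II$ and appeals to the definition of $\Hd s\II$; your version makes that step more explicit but is not a different argument.
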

\begin{proof}
Obviously, the product $ uv $ has a bounded $L^2\II$-norm, so
it suffices to show that the $\Hd s\II$-seminorm exists.
First we consider the case $s=1$. The trivial identity $(uv)'=u'v + uv'$ yields
\begin{equation*}
  \|(uv)'\|_{L^2\II}\leq\|v\|_{L^\infty\II}\|u'\|_{L^2\II} + \|u\|_{L^\infty\II}\|v'\|_{L^2\II}<\infty,
\end{equation*}
from which the desired assertion follows immediately.
For $0<s<1$, $s\neq1/2$, we use the definition of the $\Hd s\II$-seminorm
(see, e.g. \cite{LionsMagenes:1968}).
\begin{equation*}
|v|^2_{\Hd s\II} = \int_0^1\int_0^1\frac{|v(x)-v(y)|^2}{|x-y|^{1+2s}}dxdy, ~~~s\not= 1/2.
\end{equation*}
Using the splitting $u(x)v(x)-u(y)v(y)=(u(x)-u(y))v(x)+u(y)(v(x)-v(y))$
and the trivial inequality $(a+b)^2\leq 2(a^2+b^2)$, we deduce that for $s\neq\frac{1}{2}$,
\begin{equation*}
  \begin{aligned}
   |uv|^2_{\Hd s\II} :=   &\int_0^1\int_0^1\frac{(u(x)v(x)-u(y)v(y))^2}{(x-y)^{1+2s}}dxdy\\
    \leq&2\int_0^1\int_0^1\frac{(u(x)-u(y))^2v(x)^2+u(y)^2(v(x)-v(y))^2}{(x-y)^{1+2s}}dxdy\\
    \leq&2\|v\|_{L^\infty\II}^2\int_0^1\int_0^1\frac{(u(x)-u(y))^2}{(x-y)^{1+2s}}dxdy
     + 2\|u\|_{L^\infty\II}^2\int_0^1\int_0^1\frac{(v(x)-v(y))^2}{(x-y)^{1+2s}}dxdy<\infty,
  \end{aligned}
\end{equation*}
where in the last line we have used the condition $u\in \Hd s\II\cap L^\infty\II$ and
$v\in H^s\II\cap L^\infty\II$. Therefore, by the
definition of the space $\Hd s\II$, we have $uv\in \Hd s\II$.
\end{proof}

\begin{remark}
The $L^\infty\II$ condition in Lemma \ref{lem:Hsalg} is only needed for
$s\in (0,1/2)$, since by Sobolev embedding theorem \cite{AdamsFournier:2003},
for $s>1/2$, the space $H^s\II$ embeds continuously into $L^\infty\II$.
Lemma \ref{lem:Hsalg} can also be extended to the case $s=1/2$ as follows:
if $u,v\in L^\infty\II\cap \Hd {1/2}\II$, then $uv\in \Hd{1/2}\II$. The
only change to the proof is that the equivalent norm then reads \cite{LionsMagenes:1968}
\begin{equation*}
  \|v\|^2_{\Hd {1/2}\II}=\int_0^1\int_0^1\frac{(v(x)-v(y))^2}{(x-y)^{2}}dxdy
+ \int_0^1 \bigg( \frac{v(x)^2}{1-x}+\frac{v(x)^2}{x} \bigg)dx.
\end{equation*}
\end{remark}

Now by introducing $0\le \beta<1/2$ so that $\alpha+\beta>3/2$, then we have the following theorem.

\begin{theorem} \label{thm:regcap}
Let $\beta\in[0,1/2)$ and Assumption \ref{ass:caputo} be fulfilled. Suppose that $\langle F,v\rangle=(f,v)$ for some $f\in
\Hd \beta\II$ with $\alpha+\beta>3/2$, and $q\in L^\infty\II\cap H^\beta\II$. Then
the variational solution $u\in U$ of \eqref{varc} is in $\Hd {\alpha/2}\II \cap H^{\alpha+\beta}\II$
and it is a solution of \eqref{strongp}. Further, it satisfies
\begin{equation*}
  \|u\|_{H^{\alpha+\beta}\II}\leq c\|f\|_{\Hd\beta\II}.
\end{equation*}
\end{theorem}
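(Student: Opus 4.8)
The plan is to follow the strategy of Theorem~\ref{thm:regrl}: first obtain a low-regularity variational solution, then identify it with a strong solution built from a smoother right-hand side, thereby reading off the extra regularity. Theorem~\ref{t:sourceC} provides a unique $u\in U=\Hd{\alpha/2}\II$ solving \eqref{varc} together with the stability bound $\|u\|_{\Hd{\alpha/2}\II}\le c\|f\|_{L^2\II}$; and since $\alpha>1$ the embedding $H^{\alpha/2}\II\hookrightarrow L^\infty\II$ gives $u\in L^\infty\II$ as well. The point of departure is the observation that, because $a(u,v)=A(u,v)+(qu,v)$, the solution $u$ satisfies the $q=0$ identity
\begin{equation*}
A(u,v)=(\tf,v)\quad\text{for all }v\in V,\qquad \tf:=f-qu .
\end{equation*}

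Next I would upgrade the data $\tf$ to $\Hd\beta\II$. By hypothesis $f\in\Hd\beta\II$, so it remains to control $qu$. Since $\beta<1/2<\alpha/2$, the nesting $\Hd{\alpha/2}\II\subset\Hd\beta\II$ gives $u\in\Hd\beta\II\cap L^\infty\II$, while $q\in H^\beta\II\cap L^\infty\II$ by assumption; hence Lemma~\ref{lem:Hsalg} (applicable for $\beta\in(0,1/2)$, the case $\beta=0$ being the immediate bound $\|qu\|_{L^2\II}\le\|q\|_{L^\infty\II}\|u\|_{L^2\II}$) yields $qu\in\Hd\beta\II$ together with $\|qu\|_{\Hd\beta\II}\le c\|u\|_{\Hd{\alpha/2}\II}$, the constant depending on $\|q\|_{H^\beta\II\cap L^\infty\II}$. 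Consequently $\tf\in\Hd\beta\II$ and $\|\tf\|_{\Hd\beta\II}\le c\big(\|f\|_{\Hd\beta\II}+\|u\|_{\Hd{\alpha/2}\II}\big)$.

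With $\tf\in\Hd\beta\II$ in hand I would invoke the construction of Section~\ref{sec:strongsol}. Since $\alpha+\beta>3/2$, Theorem~\ref{l:i0reg} puts ${_0I_x^\alpha}\tf$ in $\Hdi 0 {\alpha+\beta}$, so ${_0I_x^\alpha}\tf$ and its first derivative vanish at $x=0$ and the Caputo and Riemann--Liouville derivatives of ${_0I_x^\alpha}\tf$ agree. Hence the function
\begin{equation*}
\hat u=-{_0I_x^\alpha}\tf+({_0I_x^\alpha}\tf)(1)\,x
\end{equation*}
of \eqref{strongc} is a strong solution of $-\DDC0\alpha\hat u=\tf$ obeying the boundary conditions, lies in $H^{\alpha+\beta}\II$ (the linear term being smooth and the integral term supplied by Theorem~\ref{l:i0reg}), and—by the computation leading to the test space \eqref{V-space}, where the constraint $(x^{1-\alpha},v)=0$ removes the $\hat u'(0)A(x,v)$ term—satisfies $A(\hat u,v)=(\tf,v)$ for all $v\in V$. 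Because $\hat u$ vanishes at both endpoints it lies in $U$, and the $A$-problem on $U\times V$ is uniquely solvable by the inf--sup bound \eqref{isUV} together with its test-side counterpart, so $\hat u=u$. This simultaneously shows $u\in\Hd{\alpha/2}\II\cap H^{\alpha+\beta}\II$ and that $u$ solves \eqref{strongp}, while Theorem~\ref{l:i0reg} gives $\|u\|_{H^{\alpha+\beta}\II}\le c\|\tf\|_{\Hd\beta\II}$; chaining with the previous bounds and $\|u\|_{\Hd{\alpha/2}\II}\le c\|f\|_{L^2\II}\le c\|f\|_{\Hd\beta\II}$ absorbs the $qu$ contribution to give the asserted estimate.

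The hard part is the middle step: producing $qu\in\Hd\beta\II$ for a solution known a priori only in $\Hd{\alpha/2}\II$. This is exactly the role of the algebra property of Lemma~\ref{lem:Hsalg}, and it explains the two structural hypotheses: $\beta<1/2$ is needed so that $u$ lands in $\Hd\beta\II$ and the product lemma applies, whereas $\alpha+\beta>3/2$ is what makes ${_0I_x^\alpha}\tf$ vanish together with its first derivative at the origin, so that the representation \eqref{strongc} identifies the Caputo derivative with the Riemann--Liouville one. A pleasant feature is that no iteration is required: a single pass from $\Hd{\alpha/2}\II$ to $H^{\alpha+\beta}\II$ already closes the argument, since the inclusion $\Hd{\alpha/2}\II\subset\Hd\beta\II$ feeds the product estimate directly.
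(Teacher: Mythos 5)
Your proposal is correct and follows essentially the same route as the paper: rewrite the equation with right-hand side $\tilde f=f-qu$, use Lemma \ref{lem:Hsalg} to place $qu$ (hence $\tilde f$) in $\Hd\beta\II$, and identify the variational solution with the strong solution \eqref{strongc}, whose $H^{\alpha+\beta}\II$ regularity comes from Theorem \ref{l:i0reg}. Your version is somewhat more careful than the paper's in matching $\hat u$ with $u$ via the unique solvability of the $q=0$ problem on $U\times V$ rather than asserting directly that $\hat u$ solves \eqref{varc}, but the substance is identical.
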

\begin{proof}
Let $u$ be the solution of \eqref{varc}. We consider the problem
\begin{equation}\label{ctf}
-\DDC0{\alpha} w = f-qu.
\end{equation}
By Lemma~\ref{lem:Hsalg}, $qu $ is in $\Hd{\beta}\II$. A strong solution of \eqref{ctf} is given
by \eqref{strongc} with a right hand side $\widetilde{f}=f-qu\in \Hd{\beta}\II$. Since this solution
satisfies the variational problem \eqref{varc}, it coincides with $u$. The regularity $u\in
H^{\alpha+\beta}\II$ is an immediate consequence of Theorem~\ref{l:i0reg}.
\end{proof}

\begin{remark}\label{H12}
Analogous to the proof of Theorem \ref{thm:regcap}, by Theorem 1.4.1.1 of \cite{grisvard},
Remark \ref{rmk:i0}, and a standard bootstrap argument, one
can show that if $f\in \Hd\beta$ and $q\in C^k[0,1]$ with $k>\floor(\beta)+1$, then
the variational solution $u$ is in $H^{\beta+\alpha}\II$. We note that the solution
to problem \eqref{strongp} in the Caputo case can achieve a full regularity, which is
in stark contrast with that for the Riemann-Liouville
case since for the latter, generally the best possible regularity is $H^{\alpha-1+\beta}\II$,
for any $\beta\in[0,1/2)$, due to the inherent presence of the singular term $x^{\alpha-1}$.
\end{remark}

Finally we discuss the adjoint problem in the Caputo case: find $w\in V$ such that
\begin{equation*}
  a(v,w) = \langle v, F\rangle\quad \mbox{ for all } v \in U,
\end{equation*}
for some fixed $F\in U^*$. In the case of $\langle F,v \rangle
= (f,v)$ for some $f\in L^2\II$, the strong form reads
\begin{equation*}
  -\DDR1\alpha w + qw = f,
\end{equation*}
with $w(1)=0$ and $(x^{1-\alpha},w)=0$. By repeating the steps leading to
\eqref{strongc}, we deduce that for $q=0$, the solution $w$ can be expressed as
\begin{equation*}
  w =  c_f(1-x)^{\alpha-1}- {_xI_1^\alpha}f,
\end{equation*}
with the prefactor $c_{f}$ given by
\begin{equation*}
   c_{f}=\frac{(x^{1-\alpha},{_xI_1}^\alpha f)}{(x^{1-\alpha},(1-x)^{\alpha-1})}=\frac{({_0I_x^\alpha} x^{1-\alpha},f)}{B(2-\alpha,\alpha)}=\frac{1}{\Gamma(\alpha)}(x,f),
\end{equation*}
where $B(\cdot,\cdot)$ refers to the Beta function. Clearly, there holds
\begin{equation*}
  |c_{ f}|\leq c|(x^{1-\alpha},{_xI_1^\alpha}f)|\leq c\|f\|_{L^2\II}.
\end{equation*}
Hence $w\in \Hd {\alpha/2}\II \cap\Hdi1{\alpha-1+\beta}$, for any $\beta\in[0,1/2)$. The case
of a general $q$ can be deduced analogously to the proof of Theorem \ref{thm:regrl}. Therefore,
we have the following improved regularity estimate for the adjoint solution $w$ in the Caputo case.
\begin{theorem}\label{thm:regcapadj}
Let Assumption \ref{ass:caputo} hold, and $q\in L^\infty\II$. Then with a right hand side
$\langle F,v\rangle=( f,v)$ for some $f\in L^2\II$, the solution $w$ to
\eqref{varrl} is in $\Hd{\alpha/2}\II\cap\Hdi1{\alpha-1+\beta}$ for any $\beta\in[0,1/2)$, and
further there holds
\begin{equation*}
  \|w\|_{\Hdi1 {\al -1 +\beta}} \le c\|f\|_{L^2\II}.
\end{equation*}
\end{theorem}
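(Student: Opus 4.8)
The plan is to follow verbatim the strategy of the proof of Theorem~\ref{thm:regrl}, transported to the right-sided (adjoint) setting: first secure a unique variational solution together with an a~priori $\Hdi1{\alpha/2}$-bound, then freeze the zeroth-order term so as to reduce to the explicit $q=0$ representation recorded just above the theorem, and finally read off the regularity term by term from Corollary~\ref{phi-0} and Theorem~\ref{l:i0reg}.

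First I would establish existence, uniqueness and stability for the adjoint problem. Arguing exactly as in Theorem~\ref{t:sourceC}, but with the roles of trial and test spaces interchanged and using Assumption~\ref{ass:caputo}$(\mathrm{b}^\ast)$ for injectivity, the inf-sup condition \eqref{isUV} transfers to the adjoint bilinear form with the same constant. Hence there is a unique $w\in V$ with $a(v,w)=(v,f)$ for all $v\in U$, and it satisfies $\|w\|_{\Hdi1{\alpha/2}}\le c\|f\|_{L^2\II}$ whenever $f\in L^2\II$; in particular $w\in L^2\II$ with $\|w\|_{L^2\II}\le c\|f\|_{L^2\II}$. Since $w\in V\subset\Hdi1{\alpha/2}$, this already delivers the low-order membership claimed in the statement.

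Next I would pass to general $q$ by treating the lower-order term as data. Writing the strong form $-\DDR1\alpha w = f-qw =: \tilde f$, the bounds $q\in L^\infty\II$ and $w\in\Hdi1{\alpha/2}\hookrightarrow L^2\II$ give $qw\in L^2\II$, so $\tilde f\in L^2\II$ and, invoking the stability estimate of the previous step, $\|\tilde f\|_{L^2\II}\le \|f\|_{L^2\II}+\|q\|_{L^\infty\II}\|w\|_{L^2\II}\le c\|f\|_{L^2\II}$. I would then apply the $q=0$ representation derived above, namely $w=c_{\tilde f}(1-x)^{\alpha-1}-{_xI_1^\alpha}\tilde f$ with $|c_{\tilde f}|\le c\|\tilde f\|_{L^2\II}$, after checking—exactly as in the discussion preceding the theorem—that this function satisfies $w(1)=0$, the integral constraint $(x^{1-\alpha},w)=0$, and the adjoint variational equation, so that it coincides with $w$ by uniqueness.

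The regularity and the quantitative bound then follow term by term. Corollary~\ref{phi-0} with $\gamma=\alpha-1$ gives $(1-x)^{\alpha-1}\in\Hdi1{\alpha-1+\beta}$ for every $\beta\in[0,1/2)$, since $\alpha-1+\beta<\alpha-\tfrac12=\gamma+\tfrac12$; meanwhile Theorem~\ref{l:i0reg} (equivalently Remark~\ref{rmk:i0}) yields ${_xI_1^\alpha}\tilde f\in\Hdi1{\alpha}\subset\Hdi1{\alpha-1+\beta}$. Combining these two memberships with the scalar bounds produces $\|w\|_{\Hdi1{\alpha-1+\beta}}\le c\|\tilde f\|_{L^2\II}\le c\|f\|_{L^2\II}$, which is the asserted estimate. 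I expect the only genuinely delicate point to be the a~priori stability bound of the first step—that the forward inf-sup constant of \eqref{isUV} indeed controls $\|w\|_{\Hdi1{\alpha/2}}$—since everything downstream is a routine application of the mapping properties of ${_xI_1^\alpha}$ and of Corollary~\ref{phi-0}.
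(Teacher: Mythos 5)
Your proposal is correct and follows essentially the same route as the paper: establish well-posedness and an a priori bound for the adjoint variational problem, fold the potential term into the data via $\tilde f=f-qw\in L^2\II$, identify $w$ with the explicit $q=0$ representation $c_{\tilde f}(1-x)^{\alpha-1}-{_xI_1^\alpha}\tilde f$ by uniqueness, and read off the regularity from Corollary~\ref{phi-0} and Theorem~\ref{l:i0reg}. This is precisely the argument the paper sketches by combining the discussion preceding the theorem with the bootstrapping step of Theorem~\ref{thm:regrl}.
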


\begin{remark}
The adjoint problem for both the Caputo and Riemann-Liouville cases is of Riemann-Liouville type,
albeit with slightly different boundary conditions, and thus shares the same singularity.
\end{remark}

\section{Stability of Finite Element Approximation}\label{sec:fem}
Now we illustrate the application of the variational formulations developed in Section
\ref{sec:stability} in the numerical approximation of the boundary value problem \eqref{strongp}.
We shall analyze the stability of the discrete variational formulations, and derive error
estimates for the discrete approximations.

\subsection{Finite element spaces and their approximation properties}
We introduce a finite element approximation based on an equally spaced partition of the interval
$\II$. We let $h=1/m $ be the mesh size with $m>1$ a positive integer, and consider the nodes
$x_j=jh$, $j=0,\ldots,m$. We then define $U_h$ to be the set of continuous
functions in $U$ which are linear when restricted to the subintervals, $[x_i,x_{i+1}]$,
$i=0,\ldots,m-1$. Analogously, we define $V_h$ to be the set of functions in $V$ which
are linear when restricted to the intervals,  $[x_i,x_{i+1}]$, $i=0,\ldots,m-1$.
Clearly $U_h\subset U$ and $V_h\subset V$ implies that functions in either space are
continuous and vanish at $1$, and functions in $U_h$ vanish at $0$ while $v_h\in V_h$
satisfies the integral constraint $(x^{1-\alpha},v_h)=0$. A basis of the space $V_h$ can be
easily constructed from the standard Lagrangian nodal basis functions.

We first show the approximation
properties of the finite element spaces $U_h$ and $V_h$.
\begin{lemma}\label{fem-interp-U}
Let the mesh ${\mathcal T}_h$ be quasi-uniform.
If $u \in H^\gamma\II \cap \Hd {\al/2}\II$ with $ \alpha/2 \le \gamma \le 2$, then
\begin{equation}\label{approx-Uh}
\inf_{v \in U_h} \| u -v \|_{H^{\al/2}\II} \le ch^{\gamma -\alpha/2} \|u\|_{H^\gamma\II}.
\end{equation}
Further, if $u \in H^\gamma\II\cap V$, then
\begin{equation}\label{approx-Vh}
\inf_{v \in V_h} \|u -v\|_{H^{\alpha/2}\II} \le ch^{\gamma -{\al/2}} \|u\|_{H^\gamma\II}.
\end{equation}
\end{lemma}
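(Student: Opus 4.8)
The plan is to prove the two approximation estimates \eqref{approx-Uh} and \eqref{approx-Vh} separately, since the space $V_h$ carries the extra nonlocal integral constraint $(x^{1-\alpha},v_h)=0$ that the nodal interpolant does not automatically respect. For \eqref{approx-Uh} I would invoke the standard approximation theory for continuous piecewise linear finite elements: let $\Pi_h u \in U_h$ denote the Lagrange interpolant of $u$ at the nodes $x_j$. Since $u\in H^\gamma\II$ with $\gamma\ge\alpha/2>1/2$, the interpolant is well defined (point values make sense by Sobolev embedding), and $\Pi_h u$ vanishes at $0$ and $1$ because $u$ does, so indeed $\Pi_h u\in U_h$. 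The classical estimate on a quasi-uniform mesh gives $\|u-\Pi_h u\|_{H^s\II}\le ch^{\gamma-s}\|u\|_{H^\gamma\II}$ for $0\le s\le\gamma\le 2$; taking $s=\alpha/2$ and bounding the infimum by the value at $v=\Pi_h u$ yields \eqref{approx-Uh}. I would phrase this by first establishing the estimate on a reference element and then scaling, or simply cite a standard reference (e.g. \cite{ern-guermond}) together with an interpolation-space argument for the fractional norm $H^{\alpha/2}$.

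The main obstacle is \eqref{approx-Vh}: the nodal interpolant $\Pi_h u$ of a function $u\in V$ generally fails the constraint $(x^{1-\alpha},\Pi_h u)=0$, so $\Pi_h u\notin V_h$ and cannot be used directly. My strategy here is to correct the interpolant by subtracting a suitable multiple of a fixed function that is itself in $V_h$. Concretely, I would fix a function $\psi_h\in V_h$ (for instance the finite element function obtained by interpolating $\phi_0=(1-x)^{\alpha-1}$ and then adjusting, or more simply any $\psi_h\in U_h$ with $(x^{1-\alpha},\psi_h)\ne 0$, normalized so that it vanishes at $0$ and $1$), and set
\begin{equation*}
  v_h = \Pi_h u - \frac{(x^{1-\alpha},\Pi_h u)}{(x^{1-\alpha},\psi_h)}\,\psi_h.
\end{equation*}
By construction $v_h\in V_h$ whenever $\psi_h\in V_h$, or one arranges the correction so the constraint is met. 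The key point is that the correction coefficient is small: since $u\in V$ satisfies $(x^{1-\alpha},u)=0$, we have $(x^{1-\alpha},\Pi_h u)=(x^{1-\alpha},\Pi_h u-u)$, and this is controlled by $\|x^{1-\alpha}\|_{L^1\II}\,\|u-\Pi_h u\|_{L^\infty\II}$, which is of higher order in $h$ by the $L^\infty$ interpolation estimate (valid since $\gamma>1/2$). Hence the denominator is a fixed nonzero constant while the numerator is $O(h^{\gamma-\alpha/2})$ or better, so the correction term contributes at most the same order as the interpolation error in the $H^{\alpha/2}$-norm.

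Putting these together, I would estimate
\begin{equation*}
  \|u-v_h\|_{H^{\alpha/2}\II} \le \|u-\Pi_h u\|_{H^{\alpha/2}\II}
  + \frac{|(x^{1-\alpha},\Pi_h u-u)|}{|(x^{1-\alpha},\psi_h)|}\,\|\psi_h\|_{H^{\alpha/2}\II},
\end{equation*}
where the first term is bounded by $ch^{\gamma-\alpha/2}\|u\|_{H^\gamma\II}$ as before, and the second term is bounded by $c\,\|u-\Pi_h u\|_{L^2\II}\le ch^\gamma\|u\|_{H^\gamma\II}$ times a fixed constant, which is dominated by $h^{\gamma-\alpha/2}$ since $\alpha/2<1$. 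This gives \eqref{approx-Vh}. I expect the delicate points to be verifying that $\psi_h$ can be chosen in $V_h$ with $(x^{1-\alpha},\psi_h)$ bounded away from zero uniformly in $h$ (so that the correction coefficient stays controlled), and confirming that the $L^\infty$ and $L^2$ interpolation estimates needed for the constraint functional are available for the fractional regularity range $\gamma>1/2$; both follow from standard interpolation theory combined with the continuous embedding $H^\gamma\II\hookrightarrow L^\infty\II$.
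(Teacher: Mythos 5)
Your overall strategy is the same as the paper's: take the Lagrange interpolant for \eqref{approx-Uh} (citing standard estimates plus interpolation of spaces for the fractional norm), and for \eqref{approx-Vh} restore the integral constraint by subtracting a multiple of a fixed one-dimensional correction. The paper packages the correction as a projection $P\chi=\chi-\gamma_\chi(1-x)$ with $\gamma_\chi=(\chi,x^{1-\alpha})/(x^{1-\alpha},1-x)$, and then exploits $Pu=u$ for $u\in V$ to write $u-P\chi=P(u-\chi)$ and conclude by boundedness of $P$ on $H^{\alpha/2}\II$; your version instead estimates the correction coefficient directly via $(x^{1-\alpha},\Pi_h u)=(x^{1-\alpha},\Pi_h u-u)$. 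Both work, and the paper's is marginally cleaner because it never needs a separate bound on the coefficient.

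Two points in your write-up need repair, though neither is fatal. First, the correction function cannot lie in $V_h$: every element of $V_h$ satisfies $(x^{1-\alpha},\psi_h)=0$ by definition, so your denominator would vanish and, worse, subtracting a $V_h$-function cannot change the value of the constraint functional at all. What you need (and what you also offer as an alternative) is a piecewise-linear function vanishing at $1$ with $(x^{1-\alpha},\psi_h)\neq 0$; the paper's choice $\psi=1-x$ is ideal because it is exactly representable on every mesh and $(x^{1-\alpha},1-x)=B(2-\alpha,2)$ is a fixed positive constant, so uniformity in $h$ is automatic. (Note also that the corrected function need only vanish at $1$, not at $0$, since $V$ sits inside $\Hdi 1{\alpha/2}$.) Second, your final bound of the numerator by $c\|u-\Pi_h u\|_{L^2\II}$ via Cauchy--Schwarz requires $x^{1-\alpha}\in L^2\II$, which fails for $\alpha\ge 3/2$; you should stick with the $L^1$--$L^\infty$ pairing you state earlier, which gives $|(x^{1-\alpha},u-\Pi_h u)|\le \|x^{1-\alpha}\|_{L^1\II}\|u-\Pi_h u\|_{L^\infty\II}\le ch^{\gamma-1/2}\|u\|_{H^\gamma\II}$, and this is dominated by $h^{\gamma-\alpha/2}$ since $\alpha/2>1/2$.
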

\begin{proof}
Let $\Pi_h u \in U_h$ be the standard Lagrange finite element interpolant of  $u$ so that
for any $0 \le s \le 1$, $ \inf_{v \in U_h} \| u -v \|_{H^s\II} \le \| u - \Pi_h u \|_{H^s\II}$.
Then the estimate \eqref{approx-Uh} is an immediate consequence of the local approximation
properties of the interpolant $\Pi_h u$ \cite[Corollary 1.109, pp. 61]{ern-guermond}
for $p=2$ and Sobolev spaces of integer order. The result for the intermediate fractional values
follows from interpolation.

Now we study the approximation properties of the finite element space $V_h$. Given a function
$ u \in V$ we approximate it by a finite element function $\chi$ with $\chi(1)=0$, e.g., the
interpolation function, so that $\|u-\chi\|_{H^{\alpha/2}\II} \le ch^{\gamma-\alpha/2}
\|u\|_{H^\gamma\II}$. Then we introduce the projection operator $P:~H^{\alpha/2}\II\to V$
by $Pu = u - \gamma_u (1-x)$ where $\gamma_u=(u,x^{1-\alpha})
/(x^{1-\alpha}, 1-x)$ ensures that $Pu\in V$. Now in view of the fact that $P\chi\in V_h$ and $P$
is bounded on $H^{\alpha/2}\II$-norm, we get for $u \in V$
\begin{equation*}
   \|u-P\chi\|_{H^{\alpha/2}\II}=\|P(u-\chi)\|_{H^{\alpha/2}\II} \le c\|u-\chi\|_{H^{\alpha/2}\II},
\end{equation*}
and this completes the proof.
\end{proof}

%%%%%%%%%%%%%%%%%%%%%%%%%%%
\subsection{Error estimates in the Riemann-Liouville case}
In this case the finite element source problem is to find $u_h\in U_h$ satisfying
\begin{equation}\label{gal:rl}
a(u_h,v) = \langle F,v\rangle,\qquad \hbox{ for all } v\in U_h.
\end{equation}
Here $F\in U^\ast$ is a bounded linear functional on $U$.  In Theorem \ref{thm:femrl} below we establish
error estimates on the finite element approximation $u_h$ when the source term is given by $\langle F,v
\rangle\equiv (f,v) $ with $f\in L^2\II$. To derive the error estimates, we first establish a stability
result for the discrete variational formulation, using the method of Schatz \cite{Schatz-1974}, from which
it follows the existence and uniqueness of a discrete solution $u_h$.

\begin{lemma}\label{lem:disinfsup:riem}
Let Assumption \ref{ass:riem} hold, $f\in L^2\II$, and $q\in L^\infty\II$. Then there is
an $h_0$ such that for all $h\le h_0$ the finite element problem:  Find $u_h\in U_h$ such that
\begin{equation}\label{discp}
a(u_h,v)=(f,v)  \qquad \hbox{ for all } v\in U_h
\end{equation}
has a unique solution.
\end{lemma}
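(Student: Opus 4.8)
The plan is to establish a discrete inf-sup (stability) condition for the bilinear form $a(\cdot,\cdot)$ on $U_h$ using the method of Schatz, which treats the non-coercive form $a(u,v) = A(u,v) + (qu,v)$ as a compact perturbation of the coercive part $A(u,v)$. The strategy rests on two pillars already available in the excerpt: the coercivity of $A$ on $U = \Hd{\alpha/2}\II$ from Lemma~\ref{coercive}, and the regularity pickup for the adjoint problem from Remark~\ref{rmk:regadjrl}, which says the adjoint solution lies in $\Hdi1{\alpha-1+\beta}\cap\Hd{\alpha/2}\II$ and hence in a space strictly smoother than $U$.

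First I would set up a duality (Aubin--Nitsche type) argument. For a given $u_h\in U_h$, I would bound $\|u_h\|_{L^2\II}$ by introducing the adjoint solution $w\in U$ solving $a(v,w) = (v, u_h)$ for all $v\in U$, which by Remark~\ref{rmk:regadjrl} satisfies $w\in\Hdi1{\alpha-1+\beta}\cap\Hd{\alpha/2}\II$ with $\|w\|_{\Hdi1{\alpha-1+\beta}}\le c\|u_h\|_{L^2\II}$. Taking $v=u_h$ gives $\|u_h\|_{L^2\II}^2 = a(u_h,w)$, and using Galerkin orthogonality I would replace $w$ by $w - w_h$ for any $w_h\in U_h$; choosing $w_h$ as the interpolant and invoking the approximation estimate \eqref{approx-Uh} from Lemma~\ref{fem-interp-U} with $\gamma = \alpha-1+\beta > \alpha/2$ yields a bound of the form $\|u_h\|_{L^2\II}\le ch^{\gamma-\alpha/2}\|u_h\|_U$, i.e. the $L^2$ norm of a discrete function is controlled by a small (in $h$) multiple of its energy norm.

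Next I would combine this with the coercivity of $A$. Writing $A(u_h,u_h) = a(u_h,u_h) - (qu_h,u_h)$ and using Lemma~\ref{coercive}, I get
\begin{equation*}
c\|u_h\|_U^2 \le A(u_h,u_h) = a(u_h,u_h) - (qu_h,u_h) \le a(u_h,u_h) + \|q\|_{L^\infty\II}\|u_h\|_{L^2\II}^2.
\end{equation*}
Inserting the duality bound $\|u_h\|_{L^2\II}^2 \le ch^{2(\gamma-\alpha/2)}\|u_h\|_U^2$ and choosing $h\le h_0$ small enough that $c\|q\|_{L^\infty\II}h^{2(\gamma-\alpha/2)}\le c/2$, the perturbation term is absorbed into the left-hand side, leaving $\|u_h\|_U^2 \le c\,a(u_h,u_h)$. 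This gives a discrete coercivity (hence a discrete inf-sup condition with the sup realized at $v=u_h$) for all sufficiently small $h$, which immediately yields injectivity of the discrete solution operator; since $U_h$ is finite-dimensional, injectivity forces bijectivity, so \eqref{discp} has a unique solution.

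The main obstacle I anticipate is ensuring the duality argument actually produces a positive power of $h$, which requires the adjoint regularity exponent $\gamma=\alpha-1+\beta$ to strictly exceed $\alpha/2$; this holds since $\alpha>1$ allows a choice of $\beta\in[0,1/2)$ with $\alpha-1+\beta>\alpha/2$, but it must be checked carefully and is precisely where the regularity pickup of Remark~\ref{rmk:regadjrl} is indispensable. A secondary subtlety is that Remark~\ref{rmk:regadjrl} is stated for the adjoint of the Riemann--Liouville problem \eqref{adjvarrl}; I would need to confirm that the relevant adjoint problem here is of exactly that type (which the final remark of Section~\ref{sec:stability} asserts) and that Assumption~\ref{ass:riem}$(\mathrm{a}^\ast)$ guarantees its unique solvability so the duality construction is legitimate.
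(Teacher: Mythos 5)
There is a genuine gap, and it sits at the heart of your argument. You apply the duality (Aubin--Nitsche) step to an \emph{arbitrary} discrete function $u_h\in U_h$, invoking ``Galerkin orthogonality'' to replace the adjoint solution $w$ by $w-w_h$. But Galerkin orthogonality is a property of the error $u-u_h$ of a discrete solution of a variational problem; for a generic $z_h\in U_h$ there is no reason that $a(z_h,w_h)=0$ for $w_h\in U_h$, so the substitution $a(z_h,w)\to a(z_h,w-w_h)$ is not legitimate. Indeed, the inequality you derive from it, $\|z_h\|_{L^2\II}\le ch^{\gamma-\alpha/2}\|z_h\|_U$ for all $z_h\in U_h$, is false: take $z_h$ to be the interpolant of a fixed nonzero function in $\Hd{\alpha/2}\II$; both norms stay bounded away from zero and infinity as $h\to0$, so no positive power of $h$ can appear on the right. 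The downstream conclusion is also too strong: absorbing the perturbation would give $\|u_h\|_U^2\le c\,a(u_h,u_h)$, i.e.\ discrete coercivity of $a$ on $U_h$ for small $h$. Since the spaces $U_h$ fill up $U$, this would force $a$ to be (essentially) coercive on $U$, which is precisely what Assumption~\ref{ass:riem} does \emph{not} grant --- $q$ may be negative and large, and only an inf-sup condition, not coercivity, can hold for $a$.

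The paper's Schatz argument avoids this by applying the duality step where orthogonality genuinely holds: it introduces the Ritz projection $R_h$ with respect to the \emph{coercive} part $A$ (well defined by Lemma~\ref{coercive}), so that $A(v,u-R_hu)=0$ for $v\in U_h$, and proves $\|v-R_hv\|_{L^2\II}\le ch^{\alpha/2-1+\beta}\|v\|_U$ using the adjoint regularity of Remark~\ref{rmk:regadjrl} and Lemma~\ref{fem-interp-U}. It then runs a kick-back argument starting from the \emph{continuous} inf-sup condition \eqref{inf-suprl}: for $z_h\in U_h$ one splits $a(z_h,v)=a(z_h,v-R_hv)+a(z_h,R_hv)$, observes that the first term reduces to $(qz_h,v-R_hv)$ (the $A$-part vanishes by the definition of $R_h$) and is $O(h^{\alpha/2-1+\beta})\|z_h\|_U\|v\|_U$, while the second is controlled by the discrete sup over $U_h$ via the stability of $R_h$. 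Choosing $h_0$ small yields the discrete inf-sup condition $\tfrac{\gamma}{2}\|z_h\|_U\le\sup_{v\in U_h}a(z_h,v)/\|v\|_U$, whence injectivity and, by finite dimensionality, unique solvability. Your identification of the needed exponent condition $\alpha/2-1+\beta>0$ and the role of the adjoint regularity is correct, but to repair the proof you must route the duality through the Ritz projection error rather than through arbitrary discrete functions, and aim for an inf-sup condition rather than coercivity.
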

\begin{proof}
The existence and uniqueness of the discrete solution $u_h$ when $q=0$ is an
immediate consequence of Lemma~\ref{coercive}. To show the stability for
$q \not = 0$ we need a discrete {\it inf-sup} condition for the bilinear form
$a(u_h, v)=A(u_h, v) +(q u_h, v)$ defined on the product space $U_h \times U_h$.
To this end, we use the method of Schatz \cite{Schatz-1974}, which
involves two important steps.

The first step uses the fact that $U_h \subset U$ and the bilinear form $ A(\cdot, \cdot)$
defined by \eqref{adef} is coercive in the space $\Hd {\al/2}\II$, i.e., it satisfies
\eqref{coer1} for all $u_h \in U_h$. Hence, the problem of finding $R_h u \in U_h$ such
that  $A(v, R_hu,)=A(v,u)$ for all $v \in U_h$ has a unique solution. Also the solution
$R_h u$ is stable in $\Hd {\al/2}\II$-norm, i.e. $\| R_h u \|_{\Hd {\al/2}\II} \le c
\| u \|_{\Hd {\al/2}\II}$. Next we apply Nitsche's trick to derive an estimate in $L^2\II$-norm
and introduce the adjoint problem: find $w\in \Hd{\alpha/2}\II$ such that $(u-R_hu,v) = A(w,v)$ for all
$v\in \Hd{\alpha/2}\II.$ Then by Theorem \ref{thm:regrl}, %Remark \ref{rmk:regadjrl},
for any $\beta\in[0,1/2)$, there holds
$\|w\|_{H^{\alpha-1+\beta}\II}\leq c\|u-R_hu\|_{L^2\II}$.
Now by setting $v=u-R_hu$ in the adjoint problem, Galerkin orthogonality and Lemma \ref{fem-interp-U}, we deduce
\begin{equation*}
  \begin{aligned}
   \|u-R_hu\|_{L^2\II}^2 &= A(w,u-R_hu) = A(w-\Pi_h w,u-R_hu)\\
      & \leq c\|u-R_hu\|_{H^{\alpha/2}\II}\|w-\Pi_h w\|_{H^{\alpha/2}\II}\\
      %& \leq ch^{\alpha/2-1+\beta}\|u\|_{H^{\alpha/2}\II}\|w\|_{H^{\alpha-1+\beta}\II}\\
      & \leq ch^{\alpha/2-1+\beta}\|u\|_{H^{\alpha/2}\II}\|u-R_hu\|_{L^2\II}.
  \end{aligned}
\end{equation*}
Consequently, we get the following stability estimates for $\beta \in [0,1/2)$
\begin{equation}\label{eqn:Ritz}
  \begin{aligned}
    \|R_h u\|_{\Hd {\al/2}\II} & \le c\| u\|_{\Hd {\al/2}\II},\\
    \| u- R_h u \|_{L^{2}\II} & \le ch^{\al/2 -1 + \beta}  \| u \|_{H^{\al/2}\II}.
  \end{aligned}
\end{equation}

In the second step, we use a kick-back
argument. For any $z_h \in U_h\subset U$ due to the inf-sup condition \eqref{inf-suprl} we have
\begin{equation*}
  \gamma\|z_h\|_U \le \sup_{v \in U} \frac{a(z_h,v)}{ \|v\|_{U}}  \le
   \sup_{v \in U}  \frac{a(z_h,v -R_h v)}{ \|v\|_{U}} +  \sup_{v \in U}  \frac{a(z_h, R_h v)}{ \|v\|_{U}}.
\end{equation*}
Now using the fact that $ q \in L^\infty\II$, the error estimate in \eqref{eqn:Ritz}, and the
regularity pick up with $\beta \in [0,1/2)$ we get the following bound for the first term
\begin{equation*}
  \begin{aligned}
   \sup_{v \in U}  \frac{a(z_h,v -R_h v)}{ \|v\|_{U}}& = \sup_{v \in U}  \frac{(qz_h,v -R_h v)}{ \|v\|_{U}}\\
    & \le c\sup_{v \in U}  \frac{ \|z_h\|_{L^2\II} \|v -R_h v\|_{L^2\II}}{ \|v\|_{U}} \le ch^{\al /2-1 +\beta}  \|z_h\|_U.
  \end{aligned}
\end{equation*}
For the second term we use the stability estimate in \eqref{eqn:Ritz} for the Riesz projection operator $R_h v$
to get
\begin{equation*}
   \sup_{v \in U}  \frac{a(z_h, R_h v)}{ \|v\|_{U}} \le c\sup_{v \in U_h}  \frac{a(z_h, R_h v)}{ \| R_h v\|_{U}}
   = c\sup_{v \in U_h}  \frac{a(z_h, v)}{ \|v\|_{U}}.
\end{equation*}
Now with the choice $h_0^{\al/2-1+\beta} =\frac{1}{2} \gamma/c$ we arrive at the discrete inf-sup condition:
\begin{equation}\label{discr-inf-sup}
  \frac{\gamma}{2} \|z_h\|_U \le \sup_{v \in U_h} \frac{a(z_h,v)}{ \|v\|_{U}} \quad \text{for} \quad h \le h_0.
\end{equation}
Now the desired assertion on existence and uniqueness follows directly.
\end{proof}

Now we can state an error estimate for the discrete approximation $u_h$ in the Riemann-Liouville case.
\begin{theorem}\label{thm:femrl}
Let Assumption \ref{ass:riem} hold, $f\in L^2\II$, and $q\in L^\infty\II$. Then there is an $h_0$ such
that for all $h\le h_0$, the solution $u_h$ to the finite element problem \eqref{discp} satisfies for any $\beta\in[0,1/2)$
\begin{equation*}
   \|u-u_h\|_{L^2\II} + h^{\alpha/2-1+\beta}\|u-u_h\|_{H^{\alpha/2}\II}  \le ch^{\alpha-2+2\beta} \|f\|_{L^2\II}.
\end{equation*}
\end{theorem}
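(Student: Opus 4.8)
The plan is to derive the error estimate via the standard two-step strategy: first establish the $H^{\alpha/2}$-energy error bound using C\'ea-type arguments built on the discrete inf-sup condition from Lemma~\ref{lem:disinfsup:riem}, and then upgrade this to the sharper $L^2$-estimate through a Nitsche duality argument exploiting the regularity pickup of the adjoint problem. I would begin by invoking Theorem~\ref{thm:regrl} to record that the exact variational solution $u$ satisfies $u\in\Hdi0{\alpha-1+\beta}\cap\Hd{\alpha/2}\II$ with $\|u\|_{\Hdi0{\alpha-1+\beta}}\le c\|f\|_{L^2\II}$ for any $\beta\in[0,1/2)$; this supplies the regularity $\gamma=\alpha-1+\beta$ to feed into the interpolation estimate \eqref{approx-Uh}.

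For the energy estimate, I would use the discrete inf-sup condition \eqref{discr-inf-sup} valid for $h\le h_0$. Let $u_h$ solve \eqref{discp}. Galerkin orthogonality gives $a(u-u_h,v)=0$ for all $v\in U_h$. For an arbitrary $\chi\in U_h$, writing $u_h-\chi\in U_h$ and applying \eqref{discr-inf-sup} yields
\begin{equation*}
\tfrac{\gamma}{2}\|u_h-\chi\|_U\le\sup_{v\in U_h}\frac{a(u_h-\chi,v)}{\|v\|_U}=\sup_{v\in U_h}\frac{a(u-\chi,v)}{\|v\|_U}\le c\|u-\chi\|_U,
\end{equation*}
where the last inequality uses boundedness of $a(\cdot,\cdot)$ on $U\times U$ (coercivity plus $q\in L^\infty\II$). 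A triangle inequality then gives the quasi-optimal bound $\|u-u_h\|_U\le c\inf_{\chi\in U_h}\|u-\chi\|_U$. Taking $\gamma=\alpha-1+\beta$ in \eqref{approx-Uh} and using the regularity estimate of Theorem~\ref{thm:regrl} produces
\begin{equation*}
\|u-u_h\|_{H^{\alpha/2}\II}\le ch^{(\alpha-1+\beta)-\alpha/2}\|u\|_{H^{\alpha-1+\beta}\II}\le ch^{\alpha/2-1+\beta}\|f\|_{L^2\II},
\end{equation*}
which is precisely the energy-norm contribution to the claimed estimate.

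For the $L^2$-estimate I would set up the adjoint problem: given the error $e:=u-u_h$, find $w\in\Hd{\alpha/2}\II$ with $a(v,w)=(v,e)$ for all $v\in U$. By the adjoint regularity result (Remark~\ref{rmk:regadjrl}), $w\in\Hdi1{\alpha-1+\beta}\cap\Hd{\alpha/2}\II$ with $\|w\|_{H^{\alpha-1+\beta}\II}\le c\|e\|_{L^2\II}$. Choosing $v=e$ and using Galerkin orthogonality to subtract any interpolant $\Pi_h w\in U_h$, boundedness of $a$ gives
\begin{equation*}
\|e\|_{L^2\II}^2=a(e,w)=a(e,w-\Pi_h w)\le c\|e\|_{H^{\alpha/2}\II}\|w-\Pi_h w\|_{H^{\alpha/2}\II}.
\end{equation*}
Applying \eqref{approx-Uh} to $w$ with $\gamma=\alpha-1+\beta$ bounds $\|w-\Pi_h w\|_{H^{\alpha/2}\II}\le ch^{\alpha/2-1+\beta}\|e\|_{L^2\II}$, and inserting the energy bound for $\|e\|_{H^{\alpha/2}\II}$ yields $\|e\|_{L^2\II}\le ch^{\alpha-2+2\beta}\|f\|_{L^2\II}$ after cancelling one factor of $\|e\|_{L^2\II}$. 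Adding the two estimates with the weight $h^{\alpha/2-1+\beta}$ gives the stated result. The main obstacle is ensuring the adjoint regularity actually holds at the level needed: since $\alpha<2$ the quantity $\alpha-1+\beta$ can be less than $\alpha/2$ when $\beta$ is small, so one must verify that $\alpha/2\le\alpha-1+\beta$ (equivalently $\beta\ge1-\alpha/2$) holds in the regime where the interpolation estimate \eqref{approx-Uh} applies, and reconcile the two different roles played by $\beta$ in the primal and dual estimates; the honest accounting of these exponents is where care is required.
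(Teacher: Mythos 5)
Your proposal follows essentially the same route as the paper: quasi-optimality in the energy norm via the discrete inf-sup condition \eqref{discr-inf-sup} and Galerkin orthogonality, combined with the regularity pickup of Theorem \ref{thm:regrl} and the interpolation estimate of Lemma \ref{fem-interp-U}, followed by Nitsche's trick with the adjoint regularity of Remark \ref{rmk:regadjrl} for the $L^2\II$ bound. The exponent concern you raise at the end is resolved by proving the estimate only for $\beta\ge 1-\alpha/2$ (so that $\gamma=\alpha-1+\beta\ge\alpha/2$ in \eqref{approx-Uh}); for smaller $\beta$ the claimed bound is weaker and follows a fortiori since $h\le h_0$.
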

\begin{proof}
By Lemma \ref{lem:disinfsup:riem}, the discrete approximation $u_h$ is well defined.
The error estimate in $H^{\al/2}\II$-norm follows easily from \eqref{discr-inf-sup}
and Galerkin orthogonality. Namely, for any $ v \in U_h$ we have
\begin{equation*}
\begin{aligned}
\frac{\gamma}{2}  \| u-u_h\|_U  & \le \frac{\gamma}{2} \| u- v\|_U +\frac{\gamma}{2}  \|  v -u_h\|_U
  = \frac{\gamma}{2}  \| u-  v \|_U +   \sup_{z \in U_h} \frac{a( v -u_h , z)}{ \|z\|_{U}} \\
 & \le \frac{\gamma}{2}   \| u- v \|_U +  \sup_{z \in U_h} \frac{a( v -u , z)}{ \|z\|_{U}}
  \le c\| u-  v \|_U.
\end{aligned}
\end{equation*}
Then using Lemma \ref{fem-interp-U} and the regularity pickup in Theorem \ref{thm:regrl}  for
$\beta \in [0,1/2)$ we get
\begin{equation*}
\| u-u_h\|_U \le ch^{\al/2 -1 +\beta} \|u\|_{H^{\al -1 +\beta}\II}
\le ch^{\al/2 -1 +\beta} \|f\|_{L^2\II}
\end{equation*}

To derive the $L^2\II$-norm estimate of the error $e=u-u_h$, we apply Nitsche's trick and introduce the
adjoint problem: find $w\in \Hd{\alpha/2}\II$ such that $(v,e) = a(v,w)$ for all $v\in \Hd{\alpha/2}\II.$
Then by Remark \ref{rmk:regadjrl},  for any $\beta\in[0,1/2)$, there holds $\|w\|_{H^{\alpha-1+\beta}\II}
\leq c\|e\|_{L^2\II}$. Now by proceeding as in the proof of Lemma \ref{lem:disinfsup:riem}, we derive
the desired $L^2\II$ estimate. This concludes the proof of the theorem.
\end{proof}

\begin{remark}\label{rmk:feml2sub}
In view of classical interpolation error estimates and the stability estimate in Theorem
\ref{thm:regrl}, the $H^{\alpha/2}\II$-norm estimate of the error for the FEM approximation $u_h$ is
optimal, whereas the $L^2\II$-norm error estimate is suboptimal, which is also confirmed
by numerical experiments. The culprit of the suboptimality is due to the limited
regularity of the adjoint solution, cf. Remark \ref{rmk:regadjrl}.
\end{remark}

The analysis is also valid for the adjoint problem \eqref{adjvarrl}.
\begin{remark}
Let Assumption \ref{ass:riem} hold. Then for any $h<h_0$,
the existence and uniqueness for the solution $w_h$ for the discrete adjoint problem, i.e.,
find $w_h\in U_h$ such that
\begin{equation*}
  a(v,w_h) = (v,f) \quad  \forall v\in U_h,
\end{equation*}
for a given right hand side $f\in L^2\II$, is an immediate consequence the discrete
stability. This together with the stability  estimate in Remark \ref{rmk:regadjrl}
and the preceding Schatz analysis implies
\begin{equation*}
\|w-w_h\|_U \le ch^{\alpha/2-1+\beta} \|w\|_{\Hdi 1{\alpha-1+\beta}}.
\end{equation*}
\end{remark}

\subsection{Error estimates in the Caputo case}
In the case of the Caputo fractional derivative, the finite element source problem is to find $u_h\in
U_h$ satisfying
\begin{equation}\label{gal:c}
a(u_h,v) = \langle F,v \rangle,\qquad \hbox{ for all } v \in V_h.
\end{equation}
Here $F\in V^*$ is a bounded linear functional on $V$. The existence and uniqueness of the
discrete solution $u_h$ even when $q=0$ is not as immediate as the Riemann-Liouville case.
The discrete inf-sup condition in this ``simplest'' case $q=0$ is stated in the following lemma.

\begin{lemma} \label{isDUV}
There is an $h_0>0$ and $c$ independent of $h$ such that for all $h\le h_0$ and $w_h\in U_h$
\begin{equation*}
\|w_h\|_U \le c \sup_{v_h\in V_h } \frac {A(w_h,v_h)} {\|v_h\|_V}.
\end{equation*}
\end{lemma}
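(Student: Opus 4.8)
The plan is to transfer the continuous inf-sup condition \eqref{isUV} down to the discrete spaces by exhibiting, for each $w_h\in U_h$, an explicit near-optimal discrete test function $v_h\in V_h$. Recall that the argument establishing \eqref{isUV} uses the test function $v^\ast=w_h-\gamma_{w_h}\phi_0$, where $\phi_0=(1-x)^{\alpha-1}$ and $\gamma_{w_h}$ is given by \eqref{gammau}. By the orthogonality \eqref{ortho} and the coercivity in Lemma~\ref{coercive}, this choice satisfies $A(w_h,v^\ast)=A(w_h,w_h)\ge c\|w_h\|_{\Hd{\alpha/2}\II}^2$, together with $\|v^\ast\|_V\le c\|w_h\|_{\Hd{\alpha/2}\II}$ as in the derivation of \eqref{isUV}. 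The sole obstruction to using $v^\ast$ as a discrete test function is that the singular profile $\phi_0$ is not piecewise linear, so $v^\ast\notin V_h$.

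To remedy this, I would replace $\phi_0$ by its continuous piecewise linear Lagrange interpolant $\Pi_h\phi_0$ and then restore the integral constraint with the projection $P$ from Lemma~\ref{fem-interp-U}. Since $\phi_0(1)=0$, the function $\eta_h:=w_h-\gamma_{w_h}\Pi_h\phi_0$ is continuous, piecewise linear and vanishes at $x=1$, so $v_h:=P\eta_h\in V_h$. Subtracting, the genuinely piecewise linear contributions $w_h$ cancel and one is left with
\[
v^\ast-v_h=-\gamma_{w_h}\bigl(\phi_0-\Pi_h\phi_0\bigr)+\tilde\gamma_h\,(1-x),\qquad \tilde\gamma_h=\frac{\gamma_{w_h}\,(x^{1-\alpha},\,\phi_0-\Pi_h\phi_0)}{(x^{1-\alpha},\,1-x)},
\]
where the coefficient $\tilde\gamma_h$ of the $P$-correction is controlled by the same interpolation error $\phi_0-\Pi_h\phi_0$ (here I have used \eqref{gammau} to rewrite $(x^{1-\alpha},w_h)=\gamma_{w_h}(x^{1-\alpha},\phi_0)$).

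The crux is to show $\|v^\ast-v_h\|_{\Hd{\alpha/2}\II}\le c\,\sigma(h)\,\|w_h\|_{\Hd{\alpha/2}\II}$ with $\sigma(h)\to0$. By Corollary~\ref{phi-0}, $\phi_0\in\Hdi1{\alpha-1/2-\epsilon}$ for any small $\epsilon>0$, and since $\alpha>1$ we have $\alpha-1/2-\epsilon>\alpha/2$; the standard interpolation estimate used in Lemma~\ref{fem-interp-U} then gives $\|\phi_0-\Pi_h\phi_0\|_{\Hd{\alpha/2}\II}\le c\,h^{\alpha/2-1/2-\epsilon}$, which tends to zero. Combining this with $|\gamma_{w_h}|\le c\|w_h\|_{\Hd{\alpha/2}\II}$ and the corresponding smallness of $\tilde\gamma_h$ yields the bound with $\sigma(h)=h^{\alpha/2-1/2-\epsilon}$. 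I expect this to be the main obstacle: the key orthogonality $A(w_h,\phi_0)=0$ cannot be reproduced by any piecewise linear substitute, so one cannot obtain a uniform (rather than merely $h$-dependent) constant by a finite-dimensional invertibility argument; instead one must verify that the singular correction is approximated at a strictly positive rate in the $H^{\alpha/2}$-norm, forcing the perturbation argument below.

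Finally, I would close with a kick-back estimate. Using the boundedness of $A$ on $U\times V$,
\[
A(w_h,v_h)=A(w_h,w_h)-A(w_h,v^\ast-v_h)\ge c\|w_h\|_{\Hd{\alpha/2}\II}^2-c\,\sigma(h)\,\|w_h\|_{\Hd{\alpha/2}\II}^2,
\]
so choosing $h_0$ with $c\,\sigma(h_0)\le c/2$ gives $A(w_h,v_h)\ge\tfrac12 c\|w_h\|_{\Hd{\alpha/2}\II}^2$ for all $h\le h_0$. Since moreover $\|v_h\|_V\le\|v^\ast\|_V+\|v^\ast-v_h\|_V\le c\|w_h\|_{\Hd{\alpha/2}\II}$, dividing the two displays produces $A(w_h,v_h)/\|v_h\|_V\ge c\|w_h\|_{\Hd{\alpha/2}\II}$, which is exactly the claimed discrete inf-sup condition.
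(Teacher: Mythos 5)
Your proposal is correct and follows essentially the same route as the paper: the same near-optimal test function $w_h-\gamma_{w_h}\phi_0$, replacement of $\phi_0$ by its Lagrange interpolant, restoration of the integral constraint by subtracting a multiple of $(1-x)$ (your $P\eta_h$ is exactly the paper's $\widetilde v_h-\eta(1-x)$), and a kick-back argument in $h$. The one place you are thinner than the paper is the smallness of $\tilde\gamma_h$, i.e.\ of the weighted pairing $(x^{1-\alpha},(I-\Pi_h)\phi_0)$, which the paper estimates as $O(h^{\alpha})$ by an explicit splitting of the integral near $x=0$ and $x=1$; since $x^{1-\alpha}\notin L^2\II$ for $\alpha>3/2$ this cannot be dispatched by Cauchy--Schwarz against the $L^2$ interpolation error, although a cruder $L^1$--$L^\infty$ pairing (giving $O(h^{\alpha-1})$, still higher order than your main term $h^{\alpha/2-1/2-\epsilon}$) would suffice for the lemma.
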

\begin{proof}  Let $w_h$ be in $U_h$ and set $v=w_h-\gamma_{w_h}\phi_0$ (recall $\phi_0=(1-x)^{\alpha-1}\in \Hdi1{\alpha/2}$).
The argument leading to \eqref{isUV} implies
$A(w_h,v) \ge c \|w_h\|_{\Hd {\alpha/2} \II}^2$ and consequently, there holds
\begin{equation}\label{phib}
\|v\|_{\Hdi 1 {\alpha/2}} \le c\|w_h\|_{\Hd {\alpha/2}\II}.
\end{equation}
However, since $\phi_0$ is not piecewise linear, $v\notin V_h$. Next we
set $\widetilde v_h=\Pi_h v = w_h - \gamma_{w_h} \Pi_h \phi_0$, where
$\Pi_h$ denotes the Lagrangian interpolation operator.
We clearly have
\begin{equation}\label{ff1}
  \begin{aligned}
    \|v-\widetilde v_h\|_{H^{\alpha/2}\II} & = |\gamma_{w_h}|\|(I-\Pi_h)\phi_0\|_{H^{\alpha/2}\II} \\
    & \le c|\gamma_{w_h}| h^{\alpha-1+\beta-\alpha/2} \|\phi_0 \|_{\Hdi 1 {\alpha-1+\beta}}
  \end{aligned}
\end{equation}
for any $\beta\in [0,1/2)$ with $\alpha/2-1+\beta>0$ by Lemma \ref{phi-0}.
Now in order to obtain a function $v_h$ in the space $V_h$, we define
$v_h = \widetilde v_h - \eta (1-x)$, with $\eta$ given by
$\eta=\frac {(x^{1-\alpha},\widetilde v_h)} {(x^{1-\alpha},1-x)}.$
Meanwhile, there holds
\begin{equation}\label{ff2}
  \|\widetilde v_h-v_h\|_{H^{\alpha/2}\II} = |\eta|\|1-x\|_{H^{\alpha/2}\II}.
\end{equation}
Next we bound the numerator $(x^{1-\alpha},\widetilde{v}_h)$
in the definition of $\eta$. Since the function $v$ is in $V$, we deduce
\begin{equation}
|(x^{1-\alpha},\widetilde v_h)|=|(x^{1-\alpha},\widetilde v_h-v)|
=|\gamma_{w_h}| |(x^{1-\alpha}, (I-\Pi_h) \phi_0)|.\label{ff4}
\end{equation}
Now we estimate the term $|(x^{1-\alpha}, (I-\Pi_h) \phi_0)|$.
By the concavity of the function $\phi_0$, there holds $\phi_0(x)-\Pi_h\phi_0(x) \ge 0$
and hence $|(x^{1-\alpha}, (I-\Pi_h) \phi_0)|=(x^{1-\alpha}, (I-\Pi_h) \phi_0)$.
Further for $x\in [x_i,x_{i+1}]$, by the mean value theorem
\begin{equation}\label{interpe}
\phi_0(x)-\Pi_h\phi_0(x) = \frac {-\phi_0^{\prime\prime}(\zeta_x)}2  (x_{i+1}-x)
(x-x_i),\quad \zeta_x\in (x_i,x_{i+1}).
\end{equation}
We break the integral $(x^{1-\alpha},(I-\Pi_h)\phi_0)$ into two pieces:
\begin{equation*}
(x^{1-\alpha}, (I-\Pi_h) \phi_0)=
\int_0^{1/2} x^{1-\alpha}(I-\Pi_h) \phi_0\, dx +
\int_{1/2}^1 x^{1-\alpha}(I-\Pi_h) \phi_0\, dx\equiv J_1+J_2.
\end{equation*}
As to the first integral $J_1$, by applying \eqref{interpe}, we find
\begin{equation*}
J_1\le c\|\phi_0^{\prime\prime}\|_{L^\infty(0,1/2)}  h^2
 \int_0^{1/2} x^{1-\alpha} dx \le ch^2.
\end{equation*}
We break up the second integral $J_2$ further, i.e.,
\begin{equation*}
   \begin{aligned}
   J_2 &= \int_{1/2}^{1-2h} x^{1-\alpha}(I-\Pi_h) \phi_0\, dx +\int_{1-2h}^1 x^{1-\alpha}(I-\Pi_h) \phi_0\, dx\\
   &\le \frac 12 \sum_{i=\floor(m/2)}^{m-3} \|x^{1-\alpha}\|_{L^\infty(1/2,1)}
     (-\phi_0^{\prime\prime}(x_{i+1})) \int_{x_i}^{x_{i+1}}  (x_{i+1}-x)(x-x_i)\, dx +\int_{1-2h}^1 x^{1-\alpha}\phi_0\, dx\\
   &\le ch^2\sum_{i=\floor(m/2)}^{m-3} h (-\phi_0^{\prime\prime}(x_{i+1}))
     + ch^\alpha \le ch^2 \int_{x_{\floor(m/2)+1}}^{1-h} (-\phi_0^{\prime\prime}(x)) dx + ch^\alpha \le ch^\alpha.
   \end{aligned}
\end{equation*}
The above two estimates together imply
\begin{equation}\label{ff3}
(x^{1-\alpha}, (I-\Pi_h) \phi_0)\le ch^\alpha.
\end{equation}

Now, combining \eqref{ff1},\eqref{ff2}, \eqref{ff4}  and \eqref{ff3} and
using $|\gamma_{w_h}| \le c\|w_h\|_U$ gives
\begin{equation*}
  \|v-v_h\|_{H^{\alpha/2}\II} \le  ch^{\alpha/2-1+\beta} \|w_h\|_U.
\end{equation*}
This together with \eqref{phib} and the triangle inequality also implies that
$\|v_h\|_{H^{\alpha/2}\II}\le c\|w_h\|_U$. Thus,
\begin{equation*}
  \begin{aligned}
    \|w_h\|_U \le \frac {cA(w_h,v)} {\| w_h \|_U} &
         \le \frac { cA(w_h,v_h)} {\| w_h \|_U}+\frac {cA(w_h,v-v_h)} {\|w_h\|_U}\\
      &\le \frac { cA(w_h,v_h)} {\| w_h \|_U}+c_1 h^{\alpha/2-1+\beta} \|w_h\|_U.
  \end{aligned}
\end{equation*}
Taking $h_0$ so that $c_1 h_0^{\alpha/2-1+\beta}\le 1/2$ we get
$$\|w_h\|_U \le \frac {c A(w_h,v_h)} {\| w_h \|_U}
\le \frac {cA(w_h,v_h)} {\|v_h \|_{V}}
$$
from which the lemma immediately follows.
\end{proof}

Now we can show the {\it inf-sup} condition for the bilinear form $a(v,\phi)$ with a general
$q$ defined on $U_h \times V_h$.
\begin{lemma}\label{lem:disinfsup:Cap}
Let Assumption \ref{ass:caputo} be fulfilled. Then there is an $h_0>0$
such that for all $h\le h_0$
\begin{equation*}
\forall w_h\in U_h\quad\quad \|w_h\|_U \le c \sup_{v_h\in V_h } \frac {a(w_h,v_h)} {\|v_h\|_V}.
\end{equation*}
\end{lemma}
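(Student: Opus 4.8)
The plan is to mimic the Schatz perturbation argument of Lemma~\ref{lem:disinfsup:riem}, but now in the Petrov--Galerkin setting $U_h\times V_h$, using the discrete inf-sup condition for the principal part $A$ from Lemma~\ref{isDUV} in place of coercivity and treating $(q\,\cdot,\cdot)$ as a lower-order perturbation to be absorbed via an $L^2\II$ bound. First I would reduce the claim to controlling $\|w_h\|_{L^2\II}$. Writing $A(w_h,v_h)=a(w_h,v_h)-(qw_h,v_h)$ and invoking Lemma~\ref{isDUV}, together with $|(qw_h,v_h)|\le\|q\|_{L^\infty\II}\|w_h\|_{L^2\II}\|v_h\|_V$ (which uses $\|v_h\|_{L^2\II}\le\|v_h\|_V$), gives
\begin{equation*}
\|w_h\|_U\le c\sup_{v_h\in V_h}\frac{a(w_h,v_h)}{\|v_h\|_V}+c\|w_h\|_{L^2\II}.
\end{equation*}
It then suffices to show $\|w_h\|_{L^2\II}\le c\,h^{\alpha/2-1+\beta}\|w_h\|_U+c\sup_{v_h\in V_h}a(w_h,v_h)/\|v_h\|_V$ for an exponent $\alpha/2-1+\beta>0$, since absorbing the first term for $h$ small then yields the lemma.

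The $L^2\II$ bound is obtained by a Nitsche/duality argument. I would introduce the adjoint Caputo problem: find $w\in V$ with $a(v,w)=(v,w_h)$ for all $v\in U$, which is well posed under Assumption~\ref{ass:caputo}. The decisive point—and the reason this is not a verbatim copy of the Riemann--Liouville proof—is that this adjoint problem is of Riemann--Liouville type, so by Theorem~\ref{thm:regcapadj} it enjoys the pickup $w\in\Hd{\alpha/2}\II\cap\Hdi1{\alpha-1+\beta}$ with $\|w\|_{\Hdi1{\alpha-1+\beta}}\le c\|w_h\|_{L^2\II}$ for every $\beta\in[0,1/2)$ and all $\alpha\in(1,2)$; one must \emph{not} route the duality through the forward Caputo problem, whose regularity is limited when $\alpha\le 3/2$. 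Taking $v=w_h\in U_h\subset U$ gives $\|w_h\|_{L^2\II}^2=a(w_h,w)$, and for any $v_h\in V_h$ I would split $a(w_h,w)=a(w_h,w-v_h)+a(w_h,v_h)$. Since $w\in V$, the approximation estimate \eqref{approx-Vh} of Lemma~\ref{fem-interp-U} with $\gamma=\alpha-1+\beta$ supplies $v_h\in V_h$ with $\|w-v_h\|_V\le c\,h^{\alpha/2-1+\beta}\|w\|_{\Hdi1{\alpha-1+\beta}}$; boundedness of $a$ on $U\times V$ then bounds the first term by $c\,h^{\alpha/2-1+\beta}\|w_h\|_U\|w_h\|_{L^2\II}$, while the second is at most $\|v_h\|_V\sup_{\tilde v_h\in V_h}a(w_h,\tilde v_h)/\|\tilde v_h\|_V\le c\|w_h\|_{L^2\II}\sup_{\tilde v_h\in V_h}a(w_h,\tilde v_h)/\|\tilde v_h\|_V$, using $\|v_h\|_V\le c\|w\|_V\le c\|w_h\|_{L^2\II}$. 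Dividing by $\|w_h\|_{L^2\II}$ delivers the desired $L^2\II$ estimate.

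Finally I would fix $\beta\in(1-\alpha/2,1/2)$, a nonempty interval precisely because $\alpha>1$, so that $\alpha/2-1+\beta>0$ and simultaneously $\alpha-1+\beta\ge\alpha/2$ (hence $\Hdi1{\alpha-1+\beta}\hookrightarrow V$, which legitimizes both \eqref{approx-Vh} and the bound $\|w\|_V\le c\|w_h\|_{L^2\II}$). Choosing $h_0$ with $c\,h_0^{\alpha/2-1+\beta}\le 1/2$ and absorbing the $h^{\alpha/2-1+\beta}\|w_h\|_U$ term for $h\le h_0$ completes the proof. The main obstacle, and the step deserving the most care, is exactly this regularity input: a naive duality built on the forward Caputo operator would break down for $\alpha\in(1,3/2]$, and the argument succeeds only because the Caputo adjoint is Riemann--Liouville type and therefore covered by Theorem~\ref{thm:regcapadj}; a secondary point to verify is that the dual solution genuinely satisfies the integral constraint, i.e.\ lies in $V$, so that the $V_h$-approximation property may be applied.
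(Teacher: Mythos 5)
Your argument is correct and is essentially the approach the paper intends: the paper's proof of this lemma is just the remark that one repeats the Schatz argument of Lemma \ref{lem:disinfsup:riem} with Lemma \ref{isDUV} in place of coercivity, which is exactly what you carry out, with the duality step correctly routed through the Riemann--Liouville-type adjoint and the regularity pickup of Theorem \ref{thm:regcapadj}. Your only implementational deviation---dualizing $\|w_h\|_{L^2\II}$ directly rather than introducing a Ritz-type projection onto $V_h$---is a standard variant of the same Schatz argument and, if anything, conveniently sidesteps the need for a test-side discrete inf-sup condition in the Petrov--Galerkin setting.
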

\begin{proof}
The proof is essentially identical with that for Lemma \ref{lem:disinfsup:riem}, i.e., with Lemma \ref{isDUV}
in place of Lemma \ref{coercive}, together with the Schatz argument \cite{Schatz-1974}, and hence omitted.
\end{proof}

Finally we state an error estimate on the finite element approximation $u_h$ when the source term is given
by $\langle F,v \rangle \equiv (f,v) $ with $f\in \Hd\beta\II$ with $\beta\in[0,1/2)$ such that $\alpha+\beta>3/2$.
\begin{theorem}\label{thm:femcap}
Let Assumption \ref{ass:caputo} hold, $f\in \Hd\beta\II$, and $q\in L^\infty\II\cap \Hd\beta\II$
for some $\beta\in[0,1/2)$ such that $\alpha+\beta>3/2$. Then there is an $h_0$ such that
for $h\le h_0$, the finite element problem:  Find $u_h\in U_h$ satisfying
\begin{equation}\label{disccap}
a(u_h,v)=(f,v)  ,\qquad \hbox{ for all } v\in V_h,
\end{equation}
has a unique solution. Moreover, the solution $u_h$ satisfies that for any $\delta\in[0,1/2)$
\begin{equation*}
  \begin{aligned}
    \|u-u_h\|_{\Hd{\alpha/2}\II} & \le ch^{\min(\alpha+\beta,2)-\alpha/2} \|f\|_{\Hd{\beta}\II},\\
    \|u-u_h\|_{L^2\II} &\leq ch^{\min(\alpha+\beta,2)-1+\delta}\|f\|_{\Hd{\beta}\II}.
  \end{aligned}
\end{equation*}
\end{theorem}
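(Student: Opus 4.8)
The plan is to follow the three-step template of Theorem~\ref{thm:femrl}, adapted to the mismatched trial and test spaces $U_h$ and $V_h$ of the Caputo formulation. Existence and uniqueness of $u_h$ for $h\le h_0$ is immediate from the discrete inf-sup condition of Lemma~\ref{lem:disinfsup:Cap}: a square linear system whose bilinear form satisfies a discrete inf-sup estimate, with $\dim U_h=\dim V_h$, is invertible.

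For the $\Hd{\alpha/2}\II$ estimate I would first establish quasi-optimality. Given any $v\in U_h$, applying Lemma~\ref{lem:disinfsup:Cap} to $u_h-v\in U_h$ gives $\|u_h-v\|_U\le c\sup_{z_h\in V_h}a(u_h-v,z_h)/\|z_h\|_V$. Galerkin orthogonality, namely $a(u-u_h,z_h)=0$ for all $z_h\in V_h$ (obtained by subtracting \eqref{disccap} from the continuous equation), lets me replace $a(u_h-v,z_h)$ by $a(u-v,z_h)$, and boundedness of $a(\cdot,\cdot)$ then yields $\|u_h-v\|_U\le c\|u-v\|_U$. A triangle inequality gives $\|u-u_h\|_U\le c\inf_{v\in U_h}\|u-v\|_U$. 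I would then insert the approximation property \eqref{approx-Uh} with $\gamma=\min(\alpha+\beta,2)$ (the cap at $2$ reflecting the use of piecewise linears) together with the regularity pickup $\|u\|_{H^{\alpha+\beta}\II}\le c\|f\|_{\Hd\beta\II}$ of Theorem~\ref{thm:regcap}, producing $\|u-u_h\|_{\Hd{\alpha/2}\II}\le ch^{\min(\alpha+\beta,2)-\alpha/2}\|f\|_{\Hd\beta\II}$.

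The $L^2\II$ estimate I would obtain by a Nitsche duality argument. Writing $e=u-u_h\in U$, I introduce the adjoint problem of finding $w\in V$ with $a(v,w)=(v,e)$ for all $v\in U$; this is exactly the adjoint treated in Theorem~\ref{thm:regcapadj}, whose solution satisfies $w\in\Hd{\alpha/2}\II\cap\Hdi1{\alpha-1+\delta}$ with $\|w\|_{\Hdi1{\alpha-1+\delta}}\le c\|e\|_{L^2\II}$ for any $\delta\in[0,1/2)$. Taking $v=e$ gives $\|e\|_{L^2\II}^2=a(e,w)$, and Galerkin orthogonality over $V_h$ turns this into $a(e,w-w_h)$ for any $w_h\in V_h$. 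Bounding $a(\cdot,\cdot)$ and choosing $w_h$ to realize the $V_h$-approximation estimate \eqref{approx-Vh} with $\gamma=\alpha-1+\delta$ yields $\|e\|_{L^2\II}\le ch^{\alpha/2-1+\delta}\|e\|_{\Hd{\alpha/2}\II}$; combining with the energy estimate of the previous paragraph gives the claimed rate $h^{\min(\alpha+\beta,2)-1+\delta}$.

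The main obstacle is the duality step. Unlike the symmetric coercive setting, here the adjoint lives in the test space $V$ and, as noted after Theorem~\ref{thm:regcapadj}, is of Riemann-Liouville type, so it carries the singular factor $(1-x)^{\alpha-1}$ and only enjoys the reduced regularity $\Hdi1{\alpha-1+\delta}$ rather than $H^{\alpha+\delta}\II$. This caps the duality gain at $h^{\alpha/2-1+\delta}$ and is precisely why the $L^2\II$ rate is suboptimal. Care is also needed that the approximant $w_h$ genuinely lies in $V_h$, respecting the constraint $(x^{1-\alpha},w_h)=0$, which is supplied by the projection construction underlying \eqref{approx-Vh}, and that $\gamma=\alpha-1+\delta$ remains in the admissible range for \eqref{approx-Vh}, which holds in the parameter regime where the estimate represents an actual improvement.
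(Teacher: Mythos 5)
Your proposal is correct and follows essentially the same route as the paper: existence/uniqueness from the discrete inf-sup condition of Lemma~\ref{lem:disinfsup:Cap}, the energy estimate via C\'ea's lemma (quasi-optimality) combined with Lemma~\ref{fem-interp-U} and the regularity pickup of Theorem~\ref{thm:regcap}, and the $L^2\II$ bound via Nitsche's trick with the adjoint problem in $V$ and Theorem~\ref{thm:regcapadj}. The paper merely states these steps more tersely (deferring to the proof of Theorem~\ref{thm:femrl}), while you spell out the Petrov--Galerkin quasi-optimality and the membership of the dual approximant in $V_h$; both points are handled correctly.
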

\begin{proof}
According to Theorem \ref{thm:regcap}, the solution $u$ to \eqref{disccap}
is in $H^{\alpha+\beta}\II\cap \Hd {\alpha/2}\II$. By Lemma \ref{lem:disinfsup:Cap}, the discrete solution $u_h$
is well defined. The $\Hd {\alpha/2}\II$-norm estimate of the error $e=u_h-u$
follows from C\'{e}a's lemma (cf. e.g. \cite[pp. 96, Lemma 2.28]{ern-guermond}).
To derive the $L^2\II$ estimate, we appeal again to the Nitsche's trick and introduce
the adjoint problem: find $w\in V$ such that
\begin{equation*}
  a(v,w)=(v,e)\quad \mbox{ for all } v \in U.
\end{equation*}
By Theorem \ref{thm:regcapadj}, the solution $w$ lies in the space $\Hdi1{\alpha-1+\delta}$
for any $\delta\in[0,1/2)$, and satisfies the a priori estimate $\|w\|_{H^{\alpha-1+\delta}\II}\leq c\|e\|_{L^2\II}$.
The rest of the proof is identical with that of Theorem \ref{thm:femrl}.
\end{proof}

\begin{remark}
Like in the case of Riemann-Liouville fractional derivative, the $L^2\II$-norm of the
error is suboptimal due to the insufficient regularity of the adjoint solution, cf. Theorem \ref{thm:regcapadj}.
\end{remark}

\section{Numerical experiments and discussions}\label{sec:num}
In this part, we present some numerical experiments to illustrate the theory, i.e.,
the optimality of the finite element error estimates, and the difference between
the regularity estimates for the Riemann-Liouville and the Caputo fractional derivatives.

We consider the following three examples, with the source term $f$ of different regularity.
\begin{itemize}
  \item[(a)] The source term $f(x)=x(1-x)$ belongs to $\Hd {1+\delta}\II$ for any $\delta\in [0,1/2)$.
  \item[(b)] The source term $f(x)=1$ belongs to $\Hd {\delta}\II$ for any $\delta\in [0,1/2)$.
  \item[(c)] The source term $f(x)=x^{-1/4}$ belongs to the space $\Hd {\delta}\II$ for any $\delta\in [0,1/4)$.
\end{itemize}

The computations were performed on uniform meshes of mesh sizes $h=1/(2^k\times10)$, $k=1,2,\ldots,7$.
In the examples, we set the potential term $q$ to zero, so that the exact solution
can be computed directly using the solution representations \eqref{strongrl} and \eqref{strongc}. For
each example, we consider three different $\alpha$ values, i.e., $7/4$, $3/2$ and $4/3$, and
present the $L^2\II$-norm and $H^{\alpha/2}\II$-norm of the error $e=u-u_h$.

\subsection{Numerical results for example (a)} The source term $f$ is smooth, and the exact solution $u(x)$ is given by
\begin{equation*}
  u(x) =\left\{\begin{aligned}
    \tfrac{1}{\Gamma(\alpha+2)}(x^{\alpha-1}-x^{\alpha+1})-\tfrac{2}{\Gamma(\alpha+3)}(x^{\alpha-1}-x^{\alpha+2}), & \quad \text{Riemann-Liouville case},  \\  \tfrac{1}{\Gamma(\alpha+2)}(x-x^{\alpha+1})-\tfrac{2}{\Gamma(\alpha+3)}(x-x^{\alpha+2}), & \quad \text{Caputo case}.
  \end{aligned}\right.
\end{equation*}
According to Theorems \ref{thm:regrl} and \ref{thm:regcap}, the solution $u$ belongs to $\Hdi0{\alpha-1+\beta}$ and
$H^{\alpha+1+\beta}\II$ for any $\beta\in [0,1/2)$ for the Riemann-Liouville and Caputo case, respectively. In particular,
in the Caputo case, the solution $u$ belongs to $H^2\II$. Hence,
in the Riemann-Liouville case, the finite element approximations $u_h$ would converge at a rate $O(h^{\alpha/2-1/2})$
and $O(h^{2\alpha-1})$ in $H^{\alpha/2}\II$-norm and $L^2\II$-norm, respectively, whereas that for the Caputo case would be
at a rate $O(h^{2-\alpha/2})$ and $O(h^{3/2})$, respectively. Meanwhile, we note that despite
the smoothness of the source term $f$, the Riemann-Liouville solution is nonsmooth, due the presence of the term
$x^{\alpha-1}$, and hence the numerical scheme can only converge slowly in the $H^{\alpha/2}\II$-norm.
The numerical results are shown in Tables \ref{tab:exam1:riem}
and \ref{tab:exam1:cap}, where the number in the bracket under the column \texttt{rate} is the theoretical convergence
rate derived in Theorems \ref{thm:femrl} and \ref{thm:femcap}. The results indicate that the $H^{\alpha/2}\II$
estimates are fully confirmed; however, the $L^2\II$ estimates are only suboptimal. The actual convergence rate in
$L^2\II$-norm is one half order higher than the theoretical prediction, for both fractional derivatives, which agrees with
the conjecture in Remark \ref{rmk:feml2sub}. Intuitively, this might be explained by the structure of the adjoint problem:
The adjoint solution contains the singular term $(1-x)^{\alpha-1}$, with a coefficient $({_xI_1^\alpha} e)(0)=\frac{1}{\Gamma(\alpha)}
\int_0^1x^{\alpha-1}e(x)dx$ for the Riemann-Liouville case (respectively $\frac{1}{\Gamma(\alpha)}(x,e)$ for the Caputo case);
the error function $e$ has large oscillations mainly around the origin, which is however compensated by
the weight $x^{\alpha-1}$ in the Riemann-Liouville case (the weight is $x$ in the Caputo case) in the integral, and thus the
coefficient is much smaller than the apparent $L^2\II$-norm of the error. This can be numerically confirmed: the coefficient
decays much faster to zero than the $L^2\II$-norm of the error, cf. Table \ref{tab:exam1:coef}, and hence in the adjoint
solution representation, the crucial term $x^{\alpha-1}$ plays a less significant role than that in the primal problem.
Nonetheless, this higher-order convergence rate awaits mathematical justification.

\begin{table}[h!]{\small
  \caption{Numerical results for example (a) with a Riemann-Liouville fractional derivative, $f(x)=x(1-x)$, discretized
  on a uniform mesh of mesh size $h=1/(2^k\times10)$.}\label{tab:exam1:riem}
  \begin{tabular}{|c|c|c|c|c|c|c|c|c|c|}
     \hline
      $\al$ & $k$ & $1$ & $2$ & $3$ &$4$ & $5$ & $6$ & $7$ & rate\\
     \hline
     $7/4$ & $L^2$-norm     & 8.53e-3 & 6.43e-3 & 4.91e-3 & 3.77e-3 & 2.90e-3 & 2.23e-3 & 1.72e-3 & 1.25 (0.75)\\
     \cline{2-10}
     & $H^{\alpha/2}$-norm  & 1.70e-4 & 7.05e-5 & 2.95e-5 & 1.24e-5 & 5.21e-6 & 2.19e-6 & 9.21e-7 & 0.39 (0.38)\\
     \hline
     $3/2$ & $L^2$-norm     & 1.08e-3 & 5.40e-4 & 2.70e-4 & 1.35e-4 & 6.74e-5 & 3.37e-5 & 1.68e-5 & 1.00 (0.50)\\
     \cline{2-10}
     & $H^{\alpha/2}$-norm  & 2.85e-2 & 2.39e-2 & 2.00e-2 & 1.68e-2 & 1.41e-2 & 1.18e-2 & 9.82e-3 & 0.26 (0.25)\\
     \hline
     $4/3$ & $L^2$-norm     & 3.50e-3 & 1.96e-3 & 1.10e-3 & 6.16e-4 & 3.46e-4 & 1.94e-4 & 1.09e-4 & 0.83 (0.33)\\
     \cline{2-10}
     & $H^{\alpha/2}$-norm  & 5.40e-2 & 4.79e-2 & 4.25e-2 & 3.76e-2 & 3.33e-2 & 2.93e-2 & 2.58e-2 & 0.18 (0.17)\\
     \hline
     \end{tabular}}
\end{table}

\begin{table}[h!]{\small
  \caption{Numerical results for example (a) with a Caputo fractional derivative, $f(x)=x(1-x)$, discretized
  on a uniform mesh of mesh size $h=1/(2^k\times10)$.}\label{tab:exam1:cap}
  \begin{tabular}{|c|c|c|c|c|c|c|c|c|c|}
     \hline
      $\al$ & $k$ & $1$ & $2$ & $3$ &$4$ & $5$ & $6$ & $7$ & rate\\
     \hline
     $7/4$     & $L^2$-norm & 2.45e-5 & 5.98e-6 & 1.48e-6 & 3.72e-7 & 9.38e-8 & 2.37e-8 & 6.00e-9 & 2.00 (1.50)\\
     \cline{2-10}
     & $H^{\alpha/2}$-norm  & 1.50e-3 & 6.88e-4 & 3.15e-4 & 1.44e-4 & 6.62e-5 & 3.04e-5 & 1.39e-5 & 1.13 (1.13)\\
     \hline
     $3/2$     & $L^2$-norm & 4.93e-5 & 1.25e-5 & 3.14e-6 & 7.92e-7 & 1.99e-7 & 4.99e-8 & 1.25e-8 & 1.99 (1.50)\\
     \cline{2-10}
     & $H^{\alpha/2}$-norm  & 8.84e-4 & 3.69e-4 & 1.54e-4 & 6.48e-5 & 2.72e-5 & 1.14e-5 & 4.81e-6 & 1.25 (1.25)\\
     \hline
     $4/3$    & $L^2$-norm & 7.40e-5 & 1.85e-5 & 4.62e-6 & 1.16e-6 & 2.89e-7 & 7.24e-8 & 1.81e-8 & 2.00 (1.50)\\
     \cline{2-10}
     & $H^{\alpha/2}$-norm  & 6.24e-4 & 2.43e-4 & 9.54e-5 & 3.77e-5 & 1.49e-5 & 5.91e-6 & 2.35e-6 & 1.34 (1.33)\\
     \hline
     \end{tabular}}
\end{table}

\begin{table}[h!]{\small
   \caption{The coefficient $({_xI_1^\alpha e})(0)$ for Riemann-Liouville case (respectively $\frac{1}{\Gamma(\alpha)}(x,e)$ for the Caputo case)
   in the adjoint solution representation for example (a), with $\alpha=3/2$, discretized on a uniform mesh of mesh size $h=1/(2^k\times10)$.}\label{tab:exam1:coef}
  \begin{tabular}{|c|c|c|c|c|c|c|c|c|}
     \hline
      $k$              & $1$ & $2$ & $3$ &$4$ & $5$ & $6$ & $7$ \\
     \hline
     R.-L.   &3.65e-3 & 5.09e-4 & 6.98e-5 & 9.46e-6 & 1.27e-6 & 1.70e-7 & 2.26e-8\\
     Caputo  &1.76e-3 & 2.24e-4 & 2.85e-5 & 3.59e-6 & 4.53e-7 & 5.69e-8 & 7.13e-9\\
     \hline
     \end{tabular}}
\end{table}

\subsection{Numerical results for example (b)}
In this example, the source term $f$ is very smooth, but does not satisfy the
zero boundary condition. Hence it belongs to $\Hd \beta\II$ for $\beta\in[0,1/2)$.
The exact solution $u$ is given by
\begin{equation*}
  u(x) = \left\{\begin{aligned}
     c_\alpha(x^{\alpha-1}-x^{\alpha}),\quad& \text{Riemann-Liouville case},\\
     c_\alpha(x-x^{\alpha}),\quad& \text{Caputo case},
  \end{aligned}\right.
\end{equation*}
with the constant $c_\alpha=\frac{1}{\Gamma(\alpha+1)}$. The numerical results are shown in Tables
\ref{tab:exam2:riem} and \ref{tab:exam2:cap} for the Riemann-Liouville and Caputo case, respectively.
In the Riemann-Liouville case, the convergence rates
are identical with that for example (a). This is attributed to the fact that the regularity of the
solution $u$ generally cannot go beyond $\Hdi0 {\alpha-1+\beta}$ for $\beta\in[0,1/2)$, i.e.,
 which is independent of the regularity of the source term $f$. In contrast, in
the Caputo case, for $\alpha\geq 3/2$, we observe identical convergence rates as example (a),
whereas for $\alpha=4/3$, the convergence is slower due to limited smoothing induced by the fractional
differential operator. Like before, for either fractional derivative, the empirical convergence in
$L^2\II$-norm is better than the theoretical prediction by one-half order.

\begin{table}[h!]{\small
  \caption{Numerical results for example (b) with a Riemann-Liouville fractional derivative,
  $f(x)=1$, discretized on a uniform mesh of mesh size $h=1/(2^k\times10)$.}\label{tab:exam2:riem}
  \begin{tabular}{|c|c|c|c|c|c|c|c|c|c|}
     \hline
      $\al$ & $k$ & $1$ & $2$ & $3$ &$4$ & $5$ & $6$ & $7$ & rate\\     %%%%%
     \hline
     $7/4$     & $L^2$-norm & 1.07e-3 & 4.31e-4 & 1.77e-4 & 7.37e-5 & 3.08e-5 & 1.29e-5 & 5.43e-6 & 1.27 (0.75)\\
     \cline{2-10}
     & $H^{\alpha/2}$-norm  & 5.26e-2 & 3.90e-2 & 2.94e-2 & 2.24e-2 & 1.72e-2 & 1.32e-2 & 1.01e-2 & 0.40 (0.38)\\
     \hline
     $3/2$     & $L^2$-norm & 6.44e-3 & 3.18e-3 & 1.58e-3 & 7.89e-4 & 3.94e-4 & 1.97e-4 & 9.84e-5 & 1.01 (0.50)\\
     \cline{2-10}
     & $H^{\alpha/2}$-norm  & 1.69e-1 & 1.40e-1 & 1.17e-1 & 9.82e-2 & 8.22e-2 & 6.87e-2 & 5.73e-2 & 0.26 (0.25)\\
     \hline
     $4/3$     & $L^2$-norm & 2.05e-2 & 1.15e-2 & 6.42e-3 & 3.60e-3 & 2.02e-3 & 1.13e-3 & 6.35e-4 & 0.84 (0.33)\\
     \cline{2-10}
     & $H^{\alpha/2}$-norm  & 3.17e-1 & 2.80e-1 & 2.48e-1 & 2.20e-1 & 1.94e-1 & 1.71e-1 & 1.50e-1 & 0.18 (0.17)\\
     \hline
     \end{tabular}}
\end{table}

\begin{table}[h!]{\small
  \caption{Numerical results for example (b) with a Caputo fractional derivative, $f(x)=1$, discretized on a
    uniform mesh of mesh size $h=1/(2^k\times10)$.}\label{tab:exam2:cap}
  \begin{tabular}{|c|c|c|c|c|c|c|c|c|c|}
     \hline
      $\al$ & $k$ & $1$ & $2$ & $3$ &$4$ & $5$ & $6$ & $7$ & rate\\
     \hline
     $7/4$ & $L^2$-norm     & 1.74e-4 & 4.21e-5 & 1.03e-5 & 2.51e-6 & 6.16e-7 & 1.51e-7 & 3.74e-8 & 2.00 (1.50)\\
     \cline{2-10}
     & $H^{\alpha/2}$-norm  & 8.14e-3 & 3.74e-3 & 1.72e-3 & 7.91e-4 & 3.63e-4 & 1.67e-4 & 7.65e-5 & 1.12 (1.13)\\
     \hline
     $3/2$ & $L^2$-norm     & 1.88e-4 & 4.84e-5 & 1.24e-5 & 3.17e-6 & 8.12e-7 & 2.07e-7 & 5.29e-8 & 1.97 (1.50)\\
     \cline{2-10}
     & $H^{\alpha/2}$-norm  & 4.81e-3 & 2.12e-3 & 9.33e-4 & 4.08e-4 & 1.78e-4 & 7.76e-5 & 3.37e-5 & 1.20 (1.25)\\
     \hline
     $4/3$ &     $L^2$-norm & 2.48e-4 & 6.99e-5 & 1.97e-5 & 5.53e-6 & 1.55e-6 & 4.36e-7 & 1.22e-7 & 1.83 (1.33)\\
     \cline{2-10}
     & $H^{\alpha/2}$-norm  & 3.44e-3 & 1.55e-3 & 6.96e-4 & 3.12e-4 & 1.40e-4 & 6.26e-5 & 2.80e-5 & 1.16 (1.17)\\
     \hline
     \end{tabular}}
\end{table}

\subsection{Numerical results for example (c)}
In this example, the source term $f$ is singular at the origin, and it belongs to the space
$\Hd \beta\II$ for any $\beta\in[0,1/4)$. The exact solution $u$ is given by
\begin{equation*}
  u(x) = \left\{\begin{aligned}
     c_\alpha(x^{\alpha-1}-x^{\alpha-1/4}),\quad& \text{Riemann-Liouville case},\\
     c_\alpha(x-x^{\alpha-1/4}),\quad& \text{Caputo case}.\\
  \end{aligned}\right.
\end{equation*}
with the constant $c_\alpha=\frac{\Gamma(3/4)}{\Gamma(\alpha+3/4)}$. The numerical results are
shown in Tables \ref{tab:exam3:riem} and \ref{tab:exam3:cap}. In the Riemann-Liouville case, the same
convergence rates are observed, cf. Tables \ref{tab:exam1:riem} and \ref{tab:exam2:riem}, concurring
with Remark \ref{rmk:regrl} and earlier observations. In the Caputo case,
due to the lower regularity of the source term $f$, the numerical solution $u_h$ converges slower as the fractional
order $\alpha$ approaches $1$, but the empirical convergence behavior still agrees well with the theoretical prediction.

\begin{table}[h!]{\small
  \caption{Numerical results for example (c) with a Riemann-Liouville fractional derivative, $f(x)=x^{-1/4}$,
  discretized on a uniform mesh of mesh size $h=1/(2^k\times10)$.}\label{tab:exam3:riem}
  \begin{tabular}{|c|c|c|c|c|c|c|c|c|c|}
     \hline
      $\al$ & $k$ & $1$ & $2$ & $3$ &$4$ & $5$ & $6$ & $7$ & rate\\
     \hline
     $7/4$ & $L^2$-norm     & 1.65e-3 & 6.61e-4 & 2.69e-4 & 1.11e-4 & 4.62e-5 & 1.93e-5 & 8.09e-6 & 1.28 (0.75)\\
     \cline{2-10}
     & $H^{\alpha/2}$-norm  & 8.07e-2 & 5.94e-2 & 4.45e-2 & 3.38e-2 & 2.57e-2 & 1.97e-2 & 1.51e-2 & 0.40 (0.38)\\
     \hline
     $3/2$ & $L^2$-norm     & 9.31e-3 & 4.60e-3 & 2.29e-3 & 1.14e-3 & 5.68e-4 & 2.83e-4 & 1.41e-4 & 1.01 (0.50)\\
     \cline{2-10}
     & $H^{\alpha/2}$-norm  & 2.44e-1 & 2.03e-1 & 1.69e-1 & 1.41e-1 & 1.18e-1 & 9.89e-2 & 8.25e-2 & 0.26 (0.25)\\
     \hline
     $4/3$   & $L^2$-norm   & 2.88e-2 & 1.61e-2 & 9.02e-3 & 5.06e-3 & 2.84e-3 & 1.59e-3 & 8.93e-4 & 0.84 (0.33)\\
     \cline{2-10}
     & $H^{\alpha/2}$-norm  & 4.44e-1 & 3.94e-1 & 3.49e-1 & 3.09e-1 & 2.73e-1 & 2.41e-1 & 2.11e-1 & 0.18 (0.17)\\
     \hline
     \end{tabular}}
\end{table}

\begin{table}[h!]{\small
  \caption{Numerical results for example (c) with a Caputo fractional derivative, $f(x)=x^{-1/4}$, discretized
  on a uniform mesh of mesh size $h=1/(2^k\times10)$.}\label{tab:exam3:cap}
  \begin{tabular}{|c|c|c|c|c|c|c|c|c|c|}
     \hline
      $\al$ & $k$ & $1$ & $2$ & $3$ &$4$ & $5$ & $6$ & $7$ & rate\\
     \hline
     $7/4$ & $L^2$-norm     & 3.04e-4 & 7.67e-5 & 1.93e-5 & 4.88e-6 & 1.23e-6 & 3.11e-7 & 7.86e-8 & 1.99 (1.50)\\
     \cline{2-10}
     & $H^{\alpha/2}$-norm  & 1.21e-2 & 5.85e-3 & 2.82e-3 & 1.35e-3 & 6.46e-4 & 3.08e-4 & 1.46e-4 & 1.07 (1.13)\\
     \hline
     $3/2$ & $L^2$-norm     & 3.93e-4 & 1.09e-4 & 3.06e-5 & 8.69e-6 & 2.49e-6 & 7.21e-7 & 2.10e-7 & 1.81 (1.25)\\
     \cline{2-10}
     & $H^{\alpha/2}$-norm  & 6.84e-3 & 3.48e-3 & 1.75e-3 & 8.82e-4 & 4.43e-4 & 2.22e-4 & 1.11e-4 & 0.99 (1.00)\\
     \hline
     $4/3$ & $L^2$-norm     & 4.31e-4 & 1.18e-4 & 3.33e-5 & 9.83e-6 & 3.01e-6 & 9.47e-7 & 3.05e-7 & 1.70 (1.08)\\
     \cline{2-10}
     & $H^{\alpha/2}$-norm  & 2.60e-3 & 1.43e-3 & 7.72e-4 & 4.13e-4 & 2.20e-4 & 1.17e-4 & 6.19e-5 & 0.90 (0.92)\\
     \hline
     \end{tabular}}
\end{table}

\section{Conclusions}

In this work we have developed variational formulations for boundary value problems involving
either Riemann-Liouville or Caputo fractional derivatives of order $\alpha\in(1,2)$. The stability
of the variational formulations, and the Sobolev regularity of the variational solutions was established.
Moreover, the finite element discretization of the scheme was developed, and convergence rates
in $\Hd {\alpha/2}\II$- and $L^2\II$-norms were established. The error estimates in $\Hd {\alpha/2}\II$-norm
were fully supported by the numerical experiments, whereas the $L^2\II$-estimates remain one-half order lower
than the empirical convergence rates, which requires further investigations. There are several avenues
for future works. First, in the Riemann-Liouville case, the solution generally contains an inherent
singularity of the form $x^{\alpha-1}$, and numerically on a uniform mesh we have observed that the numerical
solutions suffer from pronounced spurious oscillations around the origin, especially for $\alpha$ close to
unity. This necessitates the use of an appropriate adaptively refined mesh or a locally enriched solution space.
Second, one important application of the variational formulations developed herein is fractional Sturm-Liouville
problems \cite{JinRundell:2012}, which have shown some unusual features. Third, it is natural to extend the
analysis to the parabolic counterpart, i.e., space fractional diffusion problems.

\section*{Acknowledgements}
The research of B. Jin has been supported by US NSF Grant DMS-1319052, R. Lazarov was supported in parts
by US NSF Grant DMS-1016525 and J. Pasciak
has been supported by NSF Grant DMS-1216551. The work of all authors has been supported also
by Award No. KUS-C1-016-04, made by King Abdullah University of Science and Technology (KAUST).

\bibliographystyle{abbrv}
\bibliography{frac}

\end{document}